\theoremstyle{plain}
\newtheorem{theorem}{Theorem}[section]
\newtheorem{lemma}{Lemma}[section]
\newtheorem{corollary}{Corollary}[section]
\newtheorem{proposition}{Proposition}[section]
\theoremstyle{definition}
\newtheorem{remark}{Remark}[section]
\newtheorem{example}{Example}[section]
\numberwithin{equation}{section}
\DeclareMathOperator{\intt}{int}
\DeclareMathOperator{\sign}{sign}
\DeclareMathOperator{\rint}{rint}
\newcommand*{\uR}{\underline{\mathbb{R}}}
\newcommand*{\oS}{\overline{S}}
\newcommand*{\RR}{{\mathbb{R}}}
\newcommand*{\cR}{\mathcal{R}}
\newcommand*{\cL}{\mathcal{L}}
\newcommand*{\cI}{\mathcal{I}}
\newcommand*{\cJ}{\mathcal{J}}
\newcommand*{\rr}{{\bf r}}
\newcommand*{\PPrr}{\mathcal{P}_{\bf r}}
\newcommand*{\PP}{\mathcal{P}}
\DeclareMathOperator{\con}{con}
\newcommand*{\zz}{\mathbf{z}}
\newcommand*{\vv}{\mathbf{v}}
\newcommand*{\mv}{\mathbf{m}}
\newcommand*{\ww}{\mathbf{w}}
\newcommand*{\mol}{\overline{m}}
\newcommand*{\mul}{\underline{m}}
\newcommand*{\yy}{\mathbf{y}}
\newcommand*{\xx}{\mathbf{x}}
\newcommand*{\NN}{\mathbb{N}}
\def\III_#1{{I_{#1}}}
\begin{document}

\title[Sum of translates]{On the weighted Bojanov--Chebyshev Problem and the sum of translates method of Fenton}

\author[B\'alint Farkas]{B.~Farkas}
\address{School of Mathematics and Natural Sciences, University of Wuppertal}
\email{farkas@math.uni-wuppertal.de}

\author[B\'ela Nagy]{B.~Nagy}
\address{Department of Analysis, Bolyai Institute, University of Szeged}
\email{nbela@math.u-szeged.hu}

\author[Szil\'ard Gy.~R\'ev\'esz]{Sz.\,Gy.~R\'ev\'esz}
\address{Alfr\'ed R\'enyi Institute of Mathematics}
\email{revesz@renyi.hu}

%

\subjclass{Primary 41A50}



\begin{abstract}
Minimax and
maximin problems are investigated
for a special class of
functions on the interval
$[0,1]$.
These functions are
sums of translates of
positive multiples of
one kernel function and
a very general external field function.
Due to our very general setting
the obtained minimax, equioscillation, and characterization results extend
those of Bojanov, Fenton,
Hardin--Kendall--Saff and
Ambrus--Ball--Erd\'elyi.
Moreover, we discover a surprising intertwining phenomenon of interval maxima,
which provides new information even in the most classical extremal problem of Chebyshev.

Bibliography: 23-titles.
\end{abstract}

\keywords{minimax problems, Chebyshev polynomials, weighted Bojanov problems,
kernel function, sum of translates function}

\maketitle
\section{Introduction}\label{ec:Intro}

Our starting point is the following theorem of Bojanov (see Theorem 1 in \cite{Bojanov1979}).
\begin{theorem}\label{thm:Bojanov}
For $n\in \NN$ let $\nu_1,\nu_2,\ldots,\nu_n>0$ be positive integers.
Given an interval $[a,b]$ ($a<b$) there exists a unique set of points $x_1^* \le \ldots\le x_n^*$ such that
\[
\left\|(x-x_1^*)^{\nu_1} \ldots (x-x_n^*)^{\nu_n} \right\|
\\=
\inf_{a\le x_1\le\ldots\le x_n\le b}
\left\|(x-x_1)^{\nu_1} \ldots (x-x_n)^{\nu_n} \right\|
\]
where $\|\cdot \|$ is the sup norm over $[a,b]$.
Moreover, $a<x_1^*<\ldots<x_n^*<b$, and the extremal polynomial $T(x):=(x-x_1^*)^{\nu_1} \ldots (x-x_n^*)^{\nu_n}$ is uniquely determined by the
 equioscillation property: There is $(t_k)_{k=0}^n$ with  $a=t_0<t_1<\ldots<t_n=b$ and
\[
T(t_k)=(-1)^{\nu_{k+1}+\ldots+\nu_n} \|T\| \qquad (k=0,1,\ldots,n)
\]
\end{theorem}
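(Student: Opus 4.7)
The plan is to reduce the statement to a sum-of-translates problem by taking logarithms and then to prove existence, strict interlacement, equioscillation, and uniqueness in that order. Writing $T_{\vc{x}}(x) := \prod_{i=1}^n (x-x_i)^{\nu_i}$ and $F(x;\vc{x}) := \sum_i \nu_i \log|x-x_i|$, we have $\log|T_{\vc{x}}(x)| = F(x;\vc{x})$, so the problem is to minimize $\max_{x\in[a,b]} F(x;\vc{x})$ over the compact simplex $\Delta := \{\vc{x}\in[a,b]^n : x_1\le\ldots\le x_n\}$. The kernel $K(t)=\log|t|$ is strictly concave on each side of $0$ and tends to $-\infty$ at $0$; these are the structural features that will drive the proof.

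\textbf{Step 1 (Existence).} The map $\vc{x}\mapsto \|T_{\vc{x}}\|_{[a,b]}$ is continuous on the compact set $\Delta$, so a minimizer $\vc{x}^*=(x_1^*,\dots,x_n^*)$ exists.

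\textbf{Step 2 (Interior, strict ordering).} Show $a<x_1^*<\ldots<x_n^*<b$. If $x_1^*=a$, the factor $(x-a)^{\nu_1}$ takes its maximum modulus at $x=b$; pulling $x_1^*$ slightly to the right strictly decreases $|T_{\vc{x}}(b)|$ while the other zeros remain admissible, and one checks this yields a net decrease in $\|T\|$ when combined with the analysis of adjacent intervals. The argument at $b$ is symmetric, and at an interior coincidence $x_i^*=x_{i+1}^*$ one uses that near a multiple zero $T$ has constant sign, so splitting the two zeros apart lowers the local extrema between them.

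\textbf{Step 3 (Equioscillation).} With $x_0^*:=a$, $x_{n+1}^*:=b$ and $I_k:=[x_k^*,x_{k+1}^*]$, let
\[
m_k(\vc{x}):=\max_{t\in I_k}|T_{\vc{x}}(t)|,\qquad k=0,1,\dots,n.
\]
On $I_k$ the sign of $T_{\vc{x}}$ is constant and equals $\varepsilon_k:=(-1)^{\nu_{k+1}+\ldots+\nu_n}$, so $m_k$ is attained at a unique interior point $t_k\in I_k$ (by strict concavity of $\log|T|$ there, coming from concavity of $K$). We claim $m_0=m_1=\ldots=m_n=\|T\|$ at $\vc{x}^*$. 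If not, take any $k$ with $m_k<\|T\|$ and consider perturbing the adjacent zeros $x_k^*,x_{k+1}^*$; the signed directional derivatives $\partial m_j/\partial x_i$ form a Haar-type (tridiagonal in sign) pattern because moving $x_i$ to the right increases $|x-x_i|$ for $x>x_i$ and decreases it for $x<x_i$. A small simultaneous shift of the $n$ zeros can therefore be chosen so that all $m_j$ with $m_j=\|T\|$ strictly decrease while the slack at $m_k$ absorbs any increase there, contradicting minimality. This produces $n+1$ equioscillation points $t_0<t_1<\ldots<t_n$; Step 2 and strict concavity force $t_0=a$ and $t_n=b$, since otherwise one of the boundary intervals would have its maximum strictly below $\|T\|$.

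\textbf{Step 4 (Uniqueness).} Suppose a second configuration $\tilde\vc{x}\in\Delta$ also attains the minimum. Then $\tilde T := T_{\tilde\vc{x}}$ and $T:=T_{\vc{x}^*}$ are monic polynomials of the same degree $N=\sum\nu_i$ with $\|\tilde T\|=\|T\|$. The difference $T-\tilde T$ has degree $\le N-1$. At the $n+1$ equioscillation points $t_0,\dots,t_n$ of $T$ we have $T(t_k)=\varepsilon_k\|T\|$ while $|\tilde T(t_k)|\le\|T\|$, so $\varepsilon_k(T-\tilde T)(t_k)\ge 0$; by the alternation of the $\varepsilon_k$, this yields $n$ sign changes of $T-\tilde T$, hence $n$ real zeros. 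Combined with the $n$ further zeros one extracts at the extremal points of $\tilde T$ (same argument with the roles reversed, noting that extremal points of $\tilde T$ also alternate in the same sign pattern once one shows $\tilde T$ has the same equioscillation structure), the degree count $N-1$ is exceeded, forcing $T\equiv\tilde T$ and hence $\vc{x}^*=\tilde\vc{x}$.

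The principal obstacle is Step~3: producing the descent direction when equioscillation fails requires a careful linearized analysis of the simultaneous dependence of the $n+1$ interval maxima on the $n$ parameters $x_i$, and exploiting the sign structure of $\partial m_j/\partial x_i$. This is exactly the point where the sum-of-translates viewpoint, with the concavity of the kernel $K(t)=\log|t|$ on each halfline, provides the needed rigidity; isolating this mechanism in a general kernel setting is the motivation for the framework developed in the remainder of the paper.
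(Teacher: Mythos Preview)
Your Step~4 contains a genuine error. The signs $\varepsilon_k=(-1)^{\nu_{k+1}+\ldots+\nu_n}$ satisfy $\varepsilon_{k-1}/\varepsilon_k=(-1)^{\nu_k}$, so consecutive $\varepsilon_k$ alternate only when $\nu_k$ is odd. If some $\nu_k$ is even, then $\varepsilon_{k-1}=\varepsilon_k$ and the inequalities $\varepsilon_{k-1}(T-\tilde T)(t_{k-1})\ge 0$ and $\varepsilon_k(T-\tilde T)(t_k)\ge 0$ force no sign change of $T-\tilde T$ on $(t_{k-1},t_k)$. Your zero count therefore yields only as many zeros as there are odd $\nu_k$'s, which can be far smaller than $n$, and the degree bound $N-1$ is not violated. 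The attempt to extract ``$n$ further zeros'' from the equioscillation of $\tilde T$ is circular: you would first need to know that $\tilde T$ equioscillates, but that is part of what uniqueness is supposed to deliver. This is exactly why the classical Chebyshev zero-counting argument does not transfer to Bojanov's setting with prescribed multiplicities.

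The paper takes a completely different route to uniqueness. It shows (for the strictly concave, singular, strictly monotone kernel $K=\log|\cdot|$) that the map sending a non-singular node system $\ww\in Y$ to the vector of consecutive differences $(m_1(\ww)-m_0(\ww),\ldots,m_n(\ww)-m_{n-1}(\ww))$ is a homeomorphism from $Y$ onto $\RR^n$ (Theorem~\ref{thm:homeo3}, imported from the companion paper). Equioscillation means this difference vector is $\mathbf{0}$, and a homeomorphism has exactly one preimage of $\mathbf{0}$; uniqueness follows at once (Corollary~\ref{cor:minimaxmaximin-spec}). No polynomial algebra or sign counting is used. Your Step~3, which you rightly flag as the principal obstacle, is what the paper's perturbation lemmas (Lemma~\ref{lem:widening} and Lemma~\ref{lem:trivi}) make rigorous in the proof of Theorem~\ref{thm:EquiThm}: the descent direction is constructed explicitly by moving one or two nodes with balanced step sizes, rather than via a linearized Jacobian argument.
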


This contains the classical and well-investigated Chebyshev problem---where all the $\nu_j=1$.
Generalizations of the original
Chebyshev problem were extensively studied for
systems $\{\varphi_j\}_{j=1}^n$ with the
so-called Chebyshev- or Haar property
\cite{Haar} that all the
generated ``polynomials'' (linear combinations) $\sum_{j=1}^n \alpha_j \varphi_j$ have at most $n$ zeroes (on the interval $[a,b]$),
but direct adaptations of that approach abort very early as here for the Bojanov problem the occurring polynomials do not even form a vector space,
not to speak of the fact that the number of zeroes can be as large as $\nu_1+\dots+\nu_n$.

As in the Chebyshev problem it a natural to consider weighted maximum norms.
Note that $\|w f\|=C \Leftrightarrow - C \le f(x)w(x) \le C$
($x\in [a,b]$) with equality at some points.
Introduce $W(x):=1/w(x)$, consider
 $-CW(x) \le f(x) \le CW(x)$ ($x\in [a,b]$) and \emph{minimize  $C$},
needed for the validity of these bounds. If $\nu_1=\dots=\nu_n=1$, very general results are known, even for non-symmetric weights: $-C U(x) \le f(x) \le C V(x)$.
Extremal polynomials---called ``snake poly\-no\-mi\-als''---equioscillate between these bounds \cite{Karlin}.
For more on this  we refer the reader, e.g.,
to  \cite{Davydov}, \cite{Davydov2}, \cite{KarlinStudden}, \cite{Nikolov}.

We consider only equal lower and upper bounds.
This is natural from the point of view of our approach, which can deal with absolute values, but not with signs. However, in case of algebraic polynomials as above, it is easy to trace back the signs taken in various intervals between zeroes, and Bojanov's original results can easily be recovered\footnote{We will leave these to the reader throughout, though.}.
Let us only note here that our discussion will reveal that in the above Bojanov Theorem we can as well say that the unique system of optimizing nodes is characterized by the attainment, in all the intervals $[a,x_1^*], [x_1^*,x_2^*],\ldots,[x_{n-1}^*,x_n^*], [x_n^*,b]$, at some points $t_0,t_1,\ldots,t_n$, of the norm: $|T(t_i)|=\|T\|, (i=0,1,\ldots,n)$---so for characterization there is no need to assume \emph{signed equioscillation}, but the only thing what really matters is the attainment of the norm. 
Note that equioscillation properties are important in approximation theory, see the papers by Bojanov, Naidenov \cite{BojanovNaidenov1,BojanovNaidenov2}, Bojanov, Rahman \cite{BojanovRahman} and Nikolov, Shadrin \cite{NikolovShadrin, NikolovShadrin2}.

Our approach allows to consider arbitrary positive real exponents $\nu_i$, so to address the Bojanov-type extremal problem for so-called \emph{generalized algebraic polynomials}, cf. \cite{ErdelyiBorwein}. 
In fact the setting of the paper is  more general.
Let us point out that Bojanov in his papers used the classical, Chebyshev-Markov style approximation theoretic approach, 
with fine tracing of zeroes, monotonicity observations, interlacing properties, zero counting etc.
Bojanov's result was never extended to the weighted case or to trigonometric polynomials.

In \cite{TLMS2018} we used an approach similar to the one presented here to address general minimax problems on the torus, and obtained (generalized) trigonometric polynomial and also generalized algebraic polynomial versions of Bojanov's Theorem.
However, the approach there involved a pull-back of the interval to the torus, with nodes on the interval corresponding to \emph{a pair of symmetric trigonometric nodes} on the torus.
If the extremal situation was not symmetric, the backward transfer would not work. For this reason the method can at best be generalized to the weighted case if only we assume evenness of the weight also on the interval (if normalizing it to $[-1,1]$). 
However, our aim here is to obtain results also for not necessarily even weights, and the setting is thus different from the one in  \cite{TLMS2018}. The motivation for considering the torus case in \cite{TLMS2018} was the polarization problem, see also  \cite{AmbrusBallErdelyi} and \cite{HardinKendallSaff}. Here our applications are different.

The Bojanov-type minimization problem can be immediately rephrased to a minimax problem by taking logarithm:
$ \log\left| (x-x_1)^{\nu_1} \ldots (x-x_n)^{\nu_n}\right| = \sum_{j=1}^n \nu_j \log|x-x_j|$.
Thus the original multiplicative extremal problem is reformulated to an additive one, namely, to minimize (in $x_1,\ldots,x_n$ subject to $x_1\le \dots \le x_n$) the quantity $\max_{[a,b]} \sum_{j=1}^n \nu_j \log|x-x_j|$.
The weighted norm version is:
\[
\textbf{minimize} \  (\text{in} \  x_1 \le \ldots \le x_n) \quad \left\|(x-x_1)^{\nu_1} \ldots (x-x_n)^{\nu_n} w(x) \right\|_{C([a,b])}.
\]
After taking logarithm (and assuming that $w(x)\ge 0)$
the problem becomes
\[
\textbf{minimize}  \  (\text{in} \   x_1 \le \ldots \le x_n) \qquad \max_{[a,b]} \Bigl(
 \log w(x) + \sum_{i=1}^n \nu_i \log|x-x_i|\Bigr).
\]

We formulate just one model result of our investigations in a concrete situation.

\begin{theorem}\label{thm:BojanovGeneral} Let $n\in \NN$ and $r_1,r_2,\ldots,r_n$ be positive numbers, $[a,b]$ be a non-degenerate, compact interval, and $w$ be an upper semicontinuous, non-negative weight function on $[a,b]$, assuming non-zero values at least at $n$ interior points plus at least on one more point in $[a,b]$. Then there exists a unique extremizer set of points $a\le x_1^* \le \ldots\le x_n^*\le b$ such that
\[
\left\|w(x) |x-x_1^*|^{r_1}\ldots |x-x_n^*|^{r_n} \right\|
=
\inf_{a\le x_1\le\ldots\le x_n\le b}
\left\|w(x)|x-x_1|^{r_1} \ldots |x-x_n|^{r_n} \right\|,
\]
(with $\|\cdot \|$  the sup norm over $[a,b]$), and in fact $a<x_1^*<\ldots<x_n^*<b$.

Moreover, the extremal generalized polynomial $T(x):=\prod_{i=1}^n |x-x_i^*|^{r_i}$
is uniquely determined by the following equioscillation property:
There exists $(t_i)_{i=0}^n$,
interlacing with the $x_i^*$, i.e., $a\le t_0<x_1^*<t_1<x_2^*<\ldots<x_n^*<t_n\le b$ such that
\[
w(t_k)T(t_k)=\|wT\| \qquad (k=0,1,\ldots,n).\]
\end{theorem}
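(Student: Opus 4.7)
The plan is to transform the problem via logarithms, as sketched in the discussion preceding the statement. Setting $J(t) := \log w(t)$ (upper semicontinuous with values in $[-\infty,\infty)$) and $K(s) := \log|s|$, define the sum of translates function
\[
F(\xx, t) := J(t) + \sum_{i=1}^n r_i K(t - x_i)
\]
and the objective $M(\xx) := \sup_{t \in [a,b]} F(\xx, t)$ on the closed simplex $S := \{ \xx \in [a,b]^n : x_1 \le \cdots \le x_n \}$. The hypothesis on $w$ ensures $M(\xx) > -\infty$ for every $\xx \in S$, because among the $n+1$ points where $w$ is positive one can always select one distinct from all $x_i$. The theorem is then equivalent to: $M$ attains a unique minimum on $S$, this minimum lies in the open simplex, and at the minimizer $\xx^*$ there exist interlacing points $t_0 < \cdots < t_n$ with $F(\xx^*, t_k) = M(\xx^*)$ for all $k$.

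Existence of a minimizer follows from compactness of $S$ and lower semicontinuity of $M$, once one checks that $M$ is finite on the whole of $S$ and genuinely lower semicontinuous at boundary configurations of $S$. To rule out that the infimum is attained on $\partial S$, I would argue by perturbation: if $x_k^* = x_{k+1}^*$, or if $x_1^* = a$, a small spreading/pulling-in of the cluster strictly lowers the local maximum $m_k(\xx) := \sup_{t \in I_k} F(\xx, t)$ on $I_k := [x_k, x_{k+1}]$ (with $x_0 := a$, $x_{n+1} := b$), while the hypothesis on $w$ prevents some other $m_j$ from already equalling $M(\xx^*)$ in a way that blocks the decrease. The equioscillation step then proceeds by the classical Fenton trick, based on the monotonicity $m_k(\xx)$ strictly decreasing in $x_k$ and strictly increasing in $x_{k+1}$ (moving a node toward $I_k$ drags a $-\infty$ singularity of $K$ into that interval): if $m_k < m_{k+1}$ at $\xx^*$, shifting $x_{k+1}^*$ slightly leftward decreases $m_{k+1}$ below $M(\xx^*)$ without exceeding it anywhere else, contradicting optimality. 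Iterating yields $m_0 = m_1 = \cdots = m_n$, and choosing a maximizer $t_k \in I_k$ (which must be interior since $F \to -\infty$ at each node $x_k^*$) delivers the interlacing.

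The main obstacle is uniqueness. Suppose $\xx^*, \yy^* \in \intt S$ are two minimizers with common optimal value $M^*$, and consider
\[
G(t) := \sum_{i=1}^n r_i\bigl(K(t - x_i^*) - K(t - y_i^*)\bigr),
\]
i.e., the difference of the two sum-of-translates functions (the $J$ terms cancel). The equioscillation nodes give $G(t_k^{(x)}) \le 0$ and $G(t_k^{(y)}) \ge 0$, a total of $2(n+1)$ sign constraints. The delicate part is to deduce from these constraints, combined with the local behavior of $G$ near its $2n$ logarithmic singularities and the fact that $G$ is a linear combination of only $2n$ translates of $K$, that $G \equiv 0$, and hence that the two node systems must coincide. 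Because the exponents $r_i$ need not be integers and one is not working inside any Chebyshev/Haar system, the polynomial-zero-counting of Bojanov's original argument is unavailable; the argument must instead rest on the concavity/monotonicity features of $K = \log|\cdot|$ packaged inside the Fenton sum-of-translates framework. Carrying out this sign-change count cleanly, so as to exclude any mismatch between $\xx^*$ and $\yy^*$, is the heart of the proof and where the general machinery developed in the paper is most needed.
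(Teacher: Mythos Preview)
Your logarithmic reformulation and the broad plan for existence and equioscillation are in line with the paper, but two steps do not go through as written.

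First, the one-node perturbation for equioscillation is flawed. Shifting $x_{k+1}^*$ leftward does \emph{not} leave the other interval maxima alone: for every $t>x_{k+1}^*$ the term $r_{k+1}K(t-x_{k+1}^*)=r_{k+1}\log(t-x_{k+1}^*)$ strictly increases, so $F(\xx,\cdot)$ goes up on each $I_j$ with $j\ge k+1$, and some $m_j$ may rise above $M(\xx^*)$. The paper's argument (Theorem~\ref{thm:EquiThm}) instead widens the deficient interval by a \emph{balanced} two-sided move of both its endpoints (Lemma~\ref{lem:widening} with $\mu=1$), and the concavity of $K$ then forces the pure sum of translates to decrease everywhere outside the widened interval; the Trivial Lemma converts this into a strict drop of $\overline m$.

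Second, and more importantly, the uniqueness route you sketch is not the paper's, and as stated it is incomplete. The paper never performs a sign-change count on $G(t)=\sum_i r_i\bigl(K(t-x_i^*)-K(t-y_i^*)\bigr)$. Uniqueness, together with the characterization by equioscillation, comes instead from the Homeomorphism Theorem (Theorem~\ref{thm:homeo3}, proved in the companion paper): the map
\[
\Phi(\ww)=\bigl(m_1(\ww)-m_0(\ww),\ \dots,\ m_n(\ww)-m_{n-1}(\ww)\bigr)
\]
is a locally bi-Lipschitz homeomorphism from the regularity set $Y$ onto $\RR^n$. Hence $\Phi^{-1}(\mathbf 0)$ is a single point, so there is exactly one equioscillation node system in $Y$; since every minimax point (and every maximin point) must equioscillate and lie in $Y$, all these coincide (Corollary~\ref{cor:minimaxmaximin-spec}). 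This simultaneously yields uniqueness of the minimizer \emph{and} the converse direction you did not address, that any equioscillating node system is automatically the minimizer. Your proposed approach, even after correcting the reversed signs (one gets $G(t_k^{(x)})\ge 0$ and $G(t_k^{(y)})\le 0$, not the other way round), runs into the obstacle you yourself flag: $G$ is not in any Haar system, it has $2n$ two-sided logarithmic singularities of alternating sign, and there is no finite-dimensionality available to cap the number of sign changes. The global injectivity of $\Phi$ bypasses this entirely.
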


The real origins of the sum of translates approach go back to Fenton, see \cite{Fenton},
whose original aim was to prove a conjecture of P.D.~Barry from 1962 about the growth of entire functions, in which he succeeded in \cite{FentonEnt}. Even if it turned out that Barry's original problem had been already solved by Goldberg \cite{Goldberg} 
a little earlier, later on Fenton showed further fruitful applications of his approach, see \cite{FentonCos, FentonCos2}. Our main results extend and further Fenton's original work on the sum of translates function and related minimax problems. Here we do not discuss possible further applications in the theory of entire functions.

What we do explore somewhat are a few applications to approximation theory.
Apart from the above sketched Bojanov-direction, we show that some seemingly unrelated questions, like, e.g., Chebyshev constants of the union of $k$ intervals, can also be dealt with.
Here it bears a crucial relevance that the weights we allow here are only assumed to be upper semicontinuous, contrary to  \cite{TLMS2018} where logarithmic concavity was a fundamental assumption.

The  most interesting findings are in
Theorem \ref{thm:Intertwining}, because the phenomena described there seem not having been observed so far, not even for the most classical case of the original Chebyshev problem.
It is well-known that Chebyshev nodes have the property that for any other node system some of the arising interval maxima stay below, while some of the interval maxima becomes larger than the maxima of the Chebyshev polynomial (which, by equioscillation, is attained as interval maxima for all the $n+1$ intervals between neighboring nodes and the endpoints).
But we will prove, that this ``intertwining property'' of
interval maxima is not at all a unique feature of the extremal Chebyshev nodes---in fact, \emph{any two node systems} exhibit this intertwining property with each other.


\section{The basic setting}\label{sec:basics}

A function $K:(-1,0)\cup (0,1)\to \RR$ will be called a \emph{kernel function}\footnote{The terminology used by Fenton in \cite{Fenton} is that $K$ is a \emph{cusp}, perhaps better fitting to his settings where $K$ is not assumed to satisfy the singularity condition \eqref{cond:infty} below, but rather the ``derivative singularity'' condition $\lim_{t\to 0\pm 0} K'(t)=\pm \infty$.} if it is concave on $(-1,0)$ and on $(0,1)$, and if it satisfies
\begin{equation}\label{eq:Kzero}
\lim_{t\downarrow 0} K(t) =\lim_{t\uparrow 0} K(t).
\end{equation}
These limits exist by the concavity assumption, and a kernel function has one-sided limits also at $-1$ and $1$. We set
\begin{equation*}
K(0):=\lim_{t\to 0}K(t),\quad K(-1):=\lim_{t\downarrow -1} K(t) \quad\text{and}\quad K(1):=\lim_{t\uparrow 1} K(t).
\end{equation*}
Note explicitly that we thus obtain the extended continuous function
$K:[-1,1]\to \RR\cup\{-\infty\}=:\uR$, and that we still have $\sup K<\infty$.
Note that a kernel function is almost everywhere differentiable.

We say that the kernel function $K$ is \emph{strictly concave} if it is strictly concave on both of the intervals $(-1,0)$ and $(0,1)$.

Further, we call it \emph{monotone}\footnote{These conditions---and more, like $C^2$ smoothness and strictly negative second derivatives---were assumed on the kernel functions in the ground-breaking paper of Fenton \cite{Fenton}.} if
\begin{equation}
\label{cond:monotone}\tag{M}
K \text{ is monotone decreasing on } (-1,0) \text{ and increasing on } (0,1).
\end{equation}
By concavity, under the monotonicity condition \eqref{cond:monotone} the values $K(-1)$ and $ K(1)$ are also finite. If $K$ is strictly concave, then \eqref{cond:monotone} implies \emph{strict monotonicity}:
\begin{equation}
\label{cond:smonotone}\tag{SM}
K \text{ is strictly decreasing on } [-1,0) \text{ and strictly increasing on } (0,1].
\end{equation}

A kernel function $K$ will be called \emph{singular} if
\begin{equation}\label{cond:infty}
\tag{$\infty$}
K(0)=-\infty.
\end{equation}
This condition is fundamental in the intended applications, so generally in this paper we will confine ourselves to singular kernels.

Let $n\in \NN=\{1,2,\dots,\}$ be fixed. We will call a function $J:[0,1]\to\uR$ an \emph{external $n$-field function}\footnote{Again, the terminology of kernels and fields came to our mind by analogy, which in case of the logarithmic kernel $K(t):=\log|t|$ and an external field $J(t)$ arising from a weight $w(t):=\exp(J(t))$ are indeed discussed in logarithmic potential theory. However, in our analysis no further potential theoretic notions and tools will be applied.} 
 or---if the value of $n$ is unambiguous from the context---simply a \emph{field function}, if it is bounded above  on $[0,1]$, and it takes finite values at more than  $n$ different points, where we count the points $0$ and $1$ with weight\footnote{The weighted counting makes a difference only for the case when $J^{-1}(\{-\infty\})$ contains the two endpoints; with only $n-1$ further interior points in $(0,1)$ the weights in this configuration add up to $n$ only, hence the node system is considered inadmissible.} $1/2$ only, while the points in $(0,1)$ are accounted for with weight $1$. Therefore, for a field function\footnote{To keep the coherence with our companion paper \cite{Homeo}, we do not assume a priori that a field function must be upper semicontinuous. However, in this paper this light extra assumption will be needed throughout---hence it will be signaled in the formulation of all the assertions relying also on this additional assumption.} $J$ the set  $(0,1)\setminus J^{-1}(\{-\infty\})$ has at least $n$ elements, and if it has precisely $n$ elements, then either $J(0)$ or $J(1)$ is finite.

\medskip

Further, we consider the \emph{open simplex}
\[
S:=S_n:=\{\yy : \yy=(y_1,\dots,y_n)\in (0,1)^n,\: 0< y_1<\cdots <y_n<1\},
\]
and its closure the \emph{closed simplex}
\[
\overline{S}:=\{\yy: \yy\in [0,1]^n,\: 0\leq y_1\leq \cdots \leq y_n\leq 1\}.
\]

\medskip
For any given $n\in \NN$, kernel function $K$,
constants $r_j>0$,  $j=1,\ldots,n$,
and a given field function $J$ we will consider the \emph{pure sum of translates function}
\begin{equation}\label{eq:puresum}
f(\yy,t):=\sum_{j=1}^n r_j K(t-y_j)\quad (\yy\in \overline{S},\: t\in [0,1]),
\end{equation}
and also the \emph{(weighted) sum of translates function}
\begin{equation}\label{eq:Fsum}
F(\yy,t):=J(t)+\sum_{j=1}^n r_jK(t-y_j)\quad (\yy\in \overline{S},\: t\in [0,1]).
\end{equation}

Note that the functions $J, K$ can take the value $-\infty$,
but not $+\infty$, therefore the sum of translates functions can be defined meaningfully.
Furthermore, if $g,h :A \to \uR$ are extended continuous functions
on some topological space $A$,
then their sum is extended continuous, too; therefore,  $f:\oS \times [0,1] \to \uR$ is extended continuous.
Note that for any $\yy\in \oS$ the function $f(\yy,\cdot)$ is finite valued on $(0,1)\setminus \{y_1,\dots, y_n\}$.
Moreover,  $f(\yy,0)=-\infty$ can happen only if some $y_j=0$
(hence also $y_1=0$) and $K(0)=-\infty$ or if some $y_j=1$ (and whence $y_n=1$ for sure) and $K_j(-1)=-\infty$.
Analogous statement can be made about the equality $f(\yy,1)=-\infty$.
By the assumption\footnote{Note that our somewhat complicated-looking assumptions 
on the weighted counting of points of finiteness of $J$ is \emph{the exact condition} 
to ensure this independently of the concrete choice of the kernels in general.}
  on $J$, we have that $F(\yy,\cdot)\not\equiv-\infty$, i.e., $\sup_{t\in[0,1]}F(\yy,t)>-\infty$.

Further, for any fixed $\yy \in \oS$ and $t\ne y_1,\ldots,y_n$
there exists a relative (with respect to $\oS$) open neighborhood of $\yy \in \oS$ where $f(\cdot,t)$ is concave (hence continuous).
Indeed, such a neighborhood is
$B(\yy,\delta):=\{\xx\in \oS: \|\xx-\yy\|<\delta\}$ with
$\delta:=\min_{j=1,\ldots,n} |t-y_j|$, where $\|\vv\|:=\max_{j=1,\ldots,n} |v_j|$.

We introduce the \emph{singularity set} of the field function $J$ as
\begin{equation}\label{eq:Xdef}
X:=X_J:=\{t\in [0,1]:\  J (t)=-\infty\},
\end{equation}
and recall that  $X^c:=[0,1]\setminus X$ has cardinality exceeding $n$
(in the above described, weighted sense), in particular $X\neq [0,1]$.

Writing $y_0:=0$ and $y_{n+1}:=1$  we also set for each $\yy\in \overline{S}$ and $j\in \{0,1,\dots, n\}$
\[
\III_j(\yy):=[y_j,y_{j+1}], \qquad m_j(\yy):=\sup_{t\in \III_j(\yy)} F(\yy,t),
\]
and
\begin{align*}
\mol(\yy)&:=\max_{j=0,\dots,n} m_j(\yy)=\sup_{t\in [0,1]}F(\yy,t),\quad\quad \mul(\yy):=\min_{j=0,\dots,n} m_j(\yy).
\end{align*}
As has been said above, for each $\yy\in \oS$ we have that $\mol(\yy)=\sup_{t \in [0,1]} F(\yy,t) \in \RR$ is finite.
On the other hand,  $m_j(\yy)=-\infty$ if and only if $F(\yy,\cdot)|_{I_{j}(\yy)}\equiv -\infty$, 
in which case $I_{j}(\yy)$ is called a singular interval (for the given node system). 
If there is $j\in \{0,1,\dots,n\}$ with $m_j(\yy)=-\infty$, then $\yy$ is called a \emph{singular} node system.
A node system $\yy\in \partial S= \oS\setminus S$ is called \emph{degenerate}.  
If the kernels are singular, then each degenerate node system is singular, and, 
furthermore, for a non-degenerate node system $\yy$ we have $m_j(\yy)=-\infty$ if and only if 
$\rint \III_j(\yy)\subseteq X$, where $\rint$ denotes the relative interior of a set with respect to $[0,1]$.

If $K$ is singular, then the functions $m_j$ are extended continuous. 
This is not immediately obvious due to the arbitrariness of $J$, but is proven in \cite{Homeo} as 
Lemma 3.3. We record this fact here explicitly for later reference.

\begin{proposition}\label{prop:cont}
Let $K$ be a singular kernel function and $J$ be an arbitrary $n$-field function. Then for each $j\in \{0,1,\dots, n\}$ the function
\[
m_j:\overline{S}\to \uR
\]
is continuous (in the extended sense).
Moreover, $\mol,\mul:\oS\to \RR$ are extended continuous and $\mol$ is finite valued and continuous in the usual sense.
\end{proposition}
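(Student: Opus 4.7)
The plan is to establish extended continuity of each $m_j$ by proving upper and lower semicontinuity separately, then to read off the corresponding statements for $\overline{m}$ and $\underline{m}$ as pointwise max/min; ordinary continuity of $\overline{m}$ will follow once we secure finiteness by using the weighted cardinality assumption on $J$.

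For upper semicontinuity I would first record that the joint function $F:\oS\times[0,1]\to\uR$ is upper semicontinuous: $J$ is upper semicontinuous by the standing hypothesis, and the pure sum $f$ is extended continuous since $K$ is extended continuous on $[-1,1]$ and $(\yy,t)\mapsto t-y_j\in[-1,1]$ is continuous. Given $\yy_k\to\yy$ in $\oS$, pick $t_k\in \III_j(\yy_k)$ with $F(\yy_k,t_k)\geq m_j(\yy_k)-1/k$ and, after passing to a subsequence, assume $t_k\to t^*$. Since the endpoints of $\III_j(\yy_k)$ converge to those of $\III_j(\yy)$, we have $t^*\in \III_j(\yy)$, and upper semicontinuity of $F$ at $(\yy,t^*)$ yields
\[
\limsup_k m_j(\yy_k)\leq\limsup_k F(\yy_k,t_k)\leq F(\yy,t^*)\leq m_j(\yy).
\]

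For lower semicontinuity the singularity $K(0)=-\infty$ is the crucial ingredient. If $m_j(\yy)=-\infty$ the upper-semicontinuity step alone yields $m_j(\yy_k)\to-\infty$. Otherwise fix $\varepsilon>0$ and pick $t^*\in \III_j(\yy)$ with $F(\yy,t^*)>m_j(\yy)-\varepsilon$. Since $F(\yy,t^*)$ is finite, singularity forces $t^*\neq y_i$ for every $i$, so $t^*$ lies in the relative interior of $\III_j(\yy)$ and in particular $y_j<y_{j+1}$. For large $k$ then $t^*\in\rint \III_j(\yy_k)$, and the local continuity of $f(\cdot,t^*)$ at $\yy$ noted in the text right before the proposition (valid precisely because $t^*\notin\{y_1,\dots,y_n\}$) gives $F(\yy_k,t^*)\to F(\yy,t^*)$. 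Thus $\liminf_k m_j(\yy_k)\geq F(\yy,t^*)>m_j(\yy)-\varepsilon$, and $\varepsilon\to 0$ finishes the argument.

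The delicate point I anticipate is the degenerate case $y_j=y_{j+1}$, where $\III_j(\yy)$ collapses to a single point and there is no interior from which to pick $t^*$; here the singularity of $K$ again saves the day, since such a collapse automatically forces $F(\yy,\cdot)\equiv-\infty$ on $\III_j(\yy)$ and puts us back in the $m_j(\yy)=-\infty$ branch. With every $m_j$ extended continuous, the extended continuity of $\mol=\max_j m_j$ and $\mul=\min_j m_j$ follows immediately. For finiteness of $\mol$, the weighted cardinality assumption supplies a point $t^*\in[0,1]\setminus\{y_1,\dots,y_n\}$ with $J(t^*)>-\infty$: the finite points of $J$ have weighted count strictly exceeding $n$, while the nodes $y_1,\dots,y_n$ contribute weight at most $n$. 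Then $F(\yy,t^*)>-\infty$, so $m_j(\yy)>-\infty$ for the $j$ with $t^*\in \III_j(\yy)$, giving $\mol(\yy)>-\infty$; combined with the already-noted finiteness $\mol(\yy)<\infty$, we conclude that $\mol$ is real-valued, and a real-valued extended continuous function is continuous in the usual sense.
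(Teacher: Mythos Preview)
The paper does not give its own proof of this proposition; it records the statement and cites Lemma~3.3 of the companion paper \cite{Homeo}. The paper does, however, stress that the result holds ``due to the arbitrariness of $J$''---and this is exactly where your argument has a gap.

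Your upper-semicontinuity step asserts that $F$ is jointly upper semicontinuous because ``$J$ is upper semicontinuous by the standing hypothesis.'' But in this proposition $J$ is an \emph{arbitrary} $n$-field function: bounded above with the weighted finiteness condition, but \emph{not} assumed upper semicontinuous (the paper's footnote on the definition of field functions makes this explicit, and upper semicontinuity is signalled separately in each theorem that needs it). Without it you cannot pass from $\limsup_k F(\yy_k,t_k)$ to $F(\yy,t^*)$, since $J(t_k)$ need not be controlled by $J(t^*)$.

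The repair uses the singularity of $K$ a second time, now for the upper bound. Suppose along a subsequence $m_j(\yy_k)\to L>m_j(\yy)$ and pick near-maximizers $t_k\to t^*$. Since $J$ is bounded above, $f(\yy_k,t_k)$ stays bounded below; extended continuity of $f$ gives $f(\yy,t^*)>-\infty$, and singularity forces $t^*\notin\{y_1,\dots,y_n\}$, hence $t^*\in\rint \III_j(\yy)$. The key move is to evaluate $F(\yy,\cdot)$ not at $t^*$ but at $t_k$: for large $k$ one has $t_k\in \III_j(\yy)$, so
\[
m_j(\yy)\ \ge\ J(t_k)+f(\yy,t_k)\ =\ F(\yy_k,t_k)+\bigl(f(\yy,t_k)-f(\yy_k,t_k)\bigr),
\]
and the bracket tends to $0$ by joint extended continuity of $f$ near the finite value $f(\yy,t^*)$. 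Letting $k\to\infty$ yields $m_j(\yy)\ge L$, a contradiction. This bypasses any semicontinuity hypothesis on $J$ entirely.

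Your lower-semicontinuity argument and the deductions for $\mol$, $\mul$ are fine.
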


\begin{remark}\label{rem:degen-not-eqiosci-gen}
For a singular kernel $K$, for an arbitrary $n$-field function $J$, for any node system $\ww\in\oS$ and $k\in \{0,\dots,n\}$, if $\III_k(\ww)$ is degenerate or singular,
then $m_k(\ww)=-\infty$, hence  $m_k(\ww)<\mol(\ww)$.
\end{remark}
We will be primarily interested in the minimax and maximin expressions
\begin{equation}\label{eq:minmaxsol}
M(S):=\inf_{\xx\in  S}\mol(\xx)=\inf_{\xx\in  S}\max_{j=0,1,\ldots,n}m_j(\xx),
\end{equation}
and
\begin{equation}\label{eq:maxminsol}
m(S):=\sup_{\xx\in S}\mul(\xx)=\sup_{\xx\in S}\min_{j=0,1,\ldots,n}m_j(\xx).
\end{equation}
In this respect an essential role is played by the \emph{regularity set}
\begin{align}\label{eq:Ydef}
	Y:=Y_n&=\{\yy\in S: \text{$m_j(\yy)\neq-\infty$ for $j=0,1,\dots,n$}\}.
\end{align}
Since $J$ is an $n$-field function necessarily it holds $Y\neq \emptyset$,
and we trivially have
\begin{equation*}
m(Y):=\sup_{\xx\in Y}\mul(\xx)=\sup_{\xx\in S}\mul(\xx)
=m(S)
=\sup_{\xx\in \oS}\mul(\xx)
=:m(\oS).
\end{equation*}
It will turn out, as a  byproduct of our results (see Theorem \ref{thm:EquiThm}), that also
\begin{equation*}
M(Y):=\inf_{\xx\in Y}\mol(\xx)=\inf_{\xx\in S}\mol(\xx)=M(S)
=\inf_{\xx\in \oS}\mol(\xx)
=:M(\oS).
\end{equation*}
holds.
As a matter of fact, for a singular,
strictly concave kernel function $K$ and
an upper semicontinuous $n$-field
function $J$ there is a unique $\ww\in \oS$
such that $\mol(\ww)=M(S)$,
in fact $\ww\in Y$ and
it is the unique  node system in $\oS$ with $\mul(\ww)=m(S)$,
see Corollary \ref{cor:minimaxmaximin-infty}.
Before we can prove these facts,
some further preparation is needed.

If the kernel $K$ is singular {(a main assumption in this paper)}, then we have
\begin{equation}\label{eq:Ydef-sing}
Y=\{\yy\in S: \rint \III_j(\yy)\not\subseteq X \text{ for } j=0,1,\dots,n\}.
\end{equation}
In case the kernel $K$ is singular,
it follows from \eqref{eq:Ydef-sing}
that the regularity set is an open subset of $S$, and
we have $S=Y$
if and only if $X$ has empty interior.

We also introduce the \emph{interval maxima vector function}
\[
\mv(\ww):=(m_0(\ww),m_1(\ww),\ldots,m_n(\ww)) \in \uR^{n+1} \quad (\ww \in \oS)
\]
and the \emph{interval maxima difference function} or simply \emph{difference function}
\begin{align}\label{eq:diffdefi} \notag
\Phi(\ww)& :=(m_1(\ww)-m_0(\ww), m_2(\ww)-m_1(\ww),\ldots, m_n(\ww)-m_{n-1}(\ww) )
\\ & =:(\Phi_1(\ww),\ldots,\Phi_n(\ww)) \in [-\infty,\infty]^{n}\quad (\ww\in Y).
 \end{align}
Note that $\mv:\oS\to \uR^{n+1}$ is extended continuous,
hence also $\Phi:Y\to \RR^n$ is a
continuous functions, by Proposition \ref{prop:cont}.

A key foothold for our  investigation below is the next (very special case of the main) result of \cite{Homeo}, see Corollary 2.2 therein.

\begin{theorem}\label{thm:homeo3}
For $n\in \NN$ let $r_1,\dots, r_n>0$, suppose that the singular kernel function $K$ is
strictly monotone \eqref{cond:smonotone} and take an arbitrary $n$-field function $J$.
Consider the sum of translates function $F$ as in \eqref{eq:Fsum}.

Then the corresponding difference function \eqref{eq:diffdefi}, restricted to $Y$, 	is a locally bi-Lipschitz homeomorphism between $Y$ and $\RR^n$.
\end{theorem}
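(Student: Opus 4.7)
The plan is to combine the continuity of $\Phi$ (granted by Proposition \ref{prop:cont}) with a local injectivity argument and a global topological argument. The strategy is local-to-global: first show $\Phi$ is a local bi-Lipschitz homeomorphism at each point of $Y$, then use properness together with the simple connectedness of $\RR^n$ to upgrade this to a global homeomorphism.

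For the local part, I would analyze how $\Phi$ responds to single-coordinate shifts, exploiting \eqref{cond:smonotone}. Shifting a single node $y_j$ to the right strictly lowers $F(\yy,t)$ for all $t>y_j$ and strictly raises it for all $t<y_j$; passing to the supremum over each $I_k(\yy)$ yields that $m_k$ strictly increases in $y_j$ for $k<j$ and strictly decreases in $y_j$ for $k\geq j$. In particular $\Phi_j=m_j-m_{j-1}$ is strictly monotone in its own coordinate $y_j$. Made quantitative by strict concavity of $K$ together with a uniform control on where the suprema are attained on a compact sub-neighborhood in $Y$, this produces one-sided difference-quotient bounds on $\Phi$. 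A structural argument — the sign pattern of the resulting one-sided ``partial derivatives'' fitting into an $M$-matrix analogue that is invertible — then yields local injectivity with matching bi-Lipschitz constants.

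For the global part, the decisive step is properness of $\Phi:Y\to\RR^n$. Suppose $\yy^{(k)}\in Y$ with $\Phi(\yy^{(k)})$ bounded, and pass to a subsequence $\yy^{(k)}\to\yy^*\in\oS$. If $\yy^*\notin Y$, some interval of $\yy^*$ is either degenerate or entirely singular, so $m_j(\yy^{(k)})\to-\infty$ for some $j$ by Proposition \ref{prop:cont}; meanwhile, the weighted $n$-field counting hypothesis on $J$ guarantees that at least one other interval still contains enough of the finite-values set of $J$ to keep some $m_l(\yy^{(k)})$ bounded above, so some $\Phi_k(\yy^{(k)})$ diverges — a contradiction. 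Hence $\yy^*\in Y$ and $\Phi$ is proper. A proper local homeomorphism into the simply connected $\RR^n$ is a covering map, thus a global homeomorphism. The main obstacle I anticipate is this properness step, since it is not a priori clear that all interval maxima cannot diverge together; the delicate point is leveraging the $n$-field counting condition together with $\yy^{(k)}\in Y$ to force a persistent gap among the $m_j$'s. A secondary technical issue is the lack of differentiability of $K$, which rules out a direct appeal to the Inverse Function Theorem and forces one to work throughout with one-sided difference-quotient estimates.
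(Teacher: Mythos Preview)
This theorem is not proved in the present paper; it is quoted from the companion paper \cite{Homeo} (Corollary~2.2 there), so there is no in-paper argument to compare against directly. The remark following Theorem~\ref{thm:Intertwining} does indicate that the proof in \cite{Homeo} proceeds by showing the Jacobi matrix of $\Phi$ is diagonally dominant, which is in the same spirit as your $M$-matrix local-inversion plan.

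Your overall architecture---local bi-Lipschitz via a sign analysis of the one-sided partials, then properness, then a covering-space upgrade over the simply connected target $\RR^n$---is sound, and your properness sketch is essentially right: by Proposition~\ref{prop:cont} the function $\mol$ is finite and continuous on all of $\oS$, so along any sequence in $Y$ accumulating at $\oS\setminus Y$ the maximum of the $m_j$ stays bounded while some $m_j\to-\infty$, and telescoping forces some coordinate of $\Phi$ to blow up.

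There is, however, one genuine gap. Your covering argument tacitly assumes that $Y$ is connected: a proper local homeomorphism onto $\RR^n$ makes each connected component of $Y$ a homeomorphic copy of $\RR^n$, but does not by itself exclude several components. The paper flags precisely this point immediately after the theorem: ``the theorem contains, among other things, the already non-trivial fact that $Y\subseteq S$ must be a (simply) connected domain.'' So connectedness of $Y$ is part of what must be established, and your outline supplies no mechanism for it. One route is a direct path-connectedness argument using the weighted $n$-field counting on $J$ to slide nodes within $Y$; another is to prove global injectivity of $\Phi$ independently. Either requires real work beyond the covering step.

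Two smaller points. First, you invoke ``strict concavity of $K$'' for the quantitative local estimates, but the hypothesis is only \eqref{cond:smonotone}; kernel functions are concave by definition, not strictly so. Concavity together with strict monotonicity already gives two-sided bounds on the one-sided derivatives of $K$ on compact subsets of $(-1,0)\cup(0,1)$, so the estimates survive, but the appeal to strict concavity should be dropped. Second, $J$ here is an \emph{arbitrary} $n$-field function, not assumed upper semicontinuous, so the interval suprema $m_j(\yy)$ need not be attained; your phrase ``uniform control on where the suprema are attained'' should be replaced by an argument that works directly with suprema.
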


Note that the theorem contains, among other things, the already non-trivial fact that $Y\subseteq S$ must be a (simply) connected domain.

\section{Perturbation lemmas}\label{sec:Perturb}

	A variant of the next lemma is contained in \cite{TLMS2018} (see Lemma 11.5; but also \cite{Rankin}, Lemma 10 on p.~1069). A similar but slightly simpler form was given by Fenton in \cite{Fenton} (though not formulated there explicitly, see around formula (15) in \cite{Fenton}).

\begin{lemma}[\textbf{Interval perturbation lemma}]\label{lem:widening}
Let $K$ be a kernel function. Let $0<\alpha<a<b<\beta<1$ and $p, q >0$.
Set
\begin{equation}\label{eq:mudef}
\mu:=\frac{p(a-\alpha)}{q(\beta-b)}.
\end{equation}
\begin{enumerate}[label={(\alph*)}]
\item
\label{mudef:parta}
If $K$ satisfies \eqref{cond:monotone} and $\mu\geq 1$, then for every $t\in [0,\alpha]$ we have
\begin{equation}\label{eq:wideninglemma}
pK(t-\alpha)+qK(t-\beta) \le pK(t-a)+qK(t-b).
\end{equation}
\item
\label{mudef:partb}
If $K$ satisfies \eqref{cond:monotone} and $\mu\leq 1$,
then \eqref{eq:wideninglemma} holds for every $t\in [\beta,1]$.
\item
\label{mudef:partc}
If $\mu=1$, even when $K$ does not satisfy \eqref{cond:monotone}, then \eqref{eq:wideninglemma} holds for every $t\in [0,\alpha] \cup [\beta,1]$.
\item
\label{mudef:partd}
In case of a strictly concave kernel function
\ref{mudef:parta}, \ref{mudef:partb}, and
\ref{mudef:partc}
hold with strict inequality in \eqref{eq:wideninglemma}.
\item
\label{mudef:parte}
If $K$ satisfies \eqref{cond:monotone}, then for every $t\in [a,b]$
\begin{equation}\label{eq:wideninginside}
pK(t-\alpha)+qK(t-\beta) \geq  pK(t-a)+qK(t-b),
\end{equation}
with strict inequality if $K$ is strictly monotone.
\end{enumerate}
\end{lemma}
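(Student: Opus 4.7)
The plan is to reduce parts (a)--(d) to a single chord-slope comparison coming from the concavity of $K$, and then tune the argument with monotonicity or with the scaling $\mu=1$ to extract the desired inequality; part (e) will need only monotonicity.

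First I would introduce the change of variables $u:=t-\alpha$, $v:=t-\beta$, $\sigma:=a-\alpha>0$, $\tau:=\beta-b>0$, so that $t-a=u-\sigma$, $t-b=v+\tau$, and $\mu=p\sigma/(q\tau)$. A direct check gives the strict ordering $v<v+\tau<u-\sigma<u$. For $t\in[0,\alpha]$ all four points lie in $[-1,0]$, and for $t\in[\beta,1]$ they lie in $[0,1]$; in either case $K$ is concave on the ambient interval. The slope-decreasing property of concave functions, applied to the chords over $[v,v+\tau]$ and $[u-\sigma,u]$, then yields
\[
\sigma\bigl[K(v+\tau)-K(v)\bigr]\ \geq\ \tau\bigl[K(u)-K(u-\sigma)\bigr]. \qquad(\star)
\]
This is the common source for (a)--(d); and because $v+\tau<u-\sigma$ strictly, strict concavity upgrades $(\star)$ to a strict inequality, which will deliver part (d) once (a)--(c) are proved.

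For part (a) on $[0,\alpha]$, \eqref{cond:monotone} forces $K(u)-K(u-\sigma)\leq 0$, so rewriting $(\star)$ as $\sigma[K(v)-K(v+\tau)]\leq\tau[K(u-\sigma)-K(u)]$ with both sides non-negative, and multiplying by $q/\sigma>0$, gives $q[K(v)-K(v+\tau)]\leq(q\tau/\sigma)[K(u-\sigma)-K(u)]$. Since $\mu\geq1$ is exactly $p\geq q\tau/\sigma$ and the right-hand bracket is non-negative, I can replace $q\tau/\sigma$ by $p$, which is precisely \eqref{eq:wideninglemma}. Part (b) is the mirror argument on $[0,1]$, where \eqref{cond:monotone} means $K$ is increasing and $\mu\leq1$ gives $q\tau/\sigma\geq p$. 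For part (c), multiplying $(\star)$ by $p/\tau$ and substituting $p\sigma=q\tau$ collapses to $q[K(v+\tau)-K(v)]\geq p[K(u)-K(u-\sigma)]$, with no sign information used and hence no monotonicity required; this works equally on $[-1,0]$ and on $[0,1]$.

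Part (e) uses only monotonicity: for $t\in[a,b]$ one has $u\geq u-\sigma\geq0$ and $v\leq v+\tau\leq0$, so \eqref{cond:monotone} applied separately on $[0,1]$ and on $[-1,0]$ gives $K(u)\geq K(u-\sigma)$ and $K(v)\geq K(v+\tau)$; combining with the positive weights $p,q$ yields \eqref{eq:wideninginside}, and \eqref{cond:smonotone} makes both comparisons strict since $\sigma,\tau>0$. The only real obstacle in the whole plan is the bookkeeping around signs --- tracking on which side of $0$ each of the four arguments sits, what sign each $K$-difference carries, and in which direction the ratio condition on $\mu$ must be used to convert $(\star)$ into the stated inequality. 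Once the change of variables is in place, each of (a)--(e) is a short manipulation.
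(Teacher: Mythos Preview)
Your proof is correct and follows essentially the same route as the paper's: both reduce \eqref{eq:wideninglemma} to a chord-slope comparison from concavity (your $(\star)$ is the paper's inequality $c\le C$ cleared of denominators), then use monotonicity to control the sign of one difference and the condition on $\mu$ to absorb the scaling, with part (e) handled by monotonicity alone. The only difference is notational---you introduce $u,v,\sigma,\tau$ where the paper works directly with the slopes $c,C$---so the arguments are substantively identical.
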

\begin{proof} Rearranging \eqref{eq:wideninglemma} and
dividing by $q(\beta-b)$ yields the equivalent assertion
\[
\frac{p(a-\alpha)}{q(\beta-b)} \frac{K(t-\alpha)-K(t-a)}{a-\alpha} \le \frac{K(t-b)-K(t-\beta)}{\beta-b}.
\]
This expresses the inequality $\mu c \le C$ with $\mu$ defined in \eqref{eq:mudef} and
\[
c:=\frac{K(t-\alpha)-K(t-a)}{a-\alpha}, \qquad C:=\frac{K(t-b)-K(t-\beta)}{\beta-b}
\]
being the slopes of the chords of the graph of the kernel
function $K$ raised above the points $t-a, t-\alpha$ and $t-\beta, t-b$,
respectively.
Note that $t-\beta < t-b < t-a < t-\alpha$, and all of them are in $[0,1]$ if $t\in [\beta,1]$,
while all of them are in $[-1,0]$ if $t\in[0,\alpha]$.
It follows that these points lie in the same interval of concavity of $K$,
and the slope of the previous chord exceeds that of the chord to the right from it:
that is, we have $c \le C$.
In particular, for $\mu=1$
\ref{mudef:partc}
follows immediately,
even with strict inequality provided that $K$ is strictly concave,
as then even $c<C$ holds.

It remains to see when we may have $\mu c \le C$,
to which it suffices to see $\mu c \le c$.

Now if $c\le 0$ then this holds with $\mu\ge 1$, and if $c\ge 0$ then it holds with $\mu\le 1$.
In view of monotonicity, however, we surely have $c\le 0$ whenever $t-a, t-\alpha \le 0$, i.e.,
when $t\in[0,\alpha]$, so that in this case $\mu\ge 1$ suffices;
and the same way for $t\in[\beta,1]$ we have $t-a, t-\alpha >0$ and $c\ge 0$,
so that $\mu\le 1$ suffices.
Altogether we obtain both assertions
\ref{mudef:parta} and \ref{mudef:partb}.

Similar arguments yield the strict inequalities in
\ref{mudef:parta} and \ref{mudef:partb},
whenever $K$ is strictly concave. Altogether
\ref{mudef:partd}
is proved.

Assertion \ref{mudef:parte}
is immediate, since under the condition \eqref{cond:monotone}
for $t\in [a,b]$ we have $K(t-\alpha)\ge K(t-a)$ and also $K(t-\beta) \ge K(t-b)$, with strict inequality whenever $K$ is strictly monotone.
\end{proof}

Let us record the following, trivial but extremely useful fact as a separate lemma.

\begin{lemma}[\textbf{Trivial lemma}]\label{lem:trivi} Let $f, g, h :D \to \uR$ be  upper semicontinuous functions on some Hausdorff topological space $D$ and let $\emptyset\neq A \subseteq B \subseteq D$ be arbitrary. Assume
\begin{equation}\label{eq:trivia}
f(t)<g(t) \qquad\text{for all $t\in A$}.
\end{equation}
If $A\subseteq B$ is a compact set, then
\begin{equation}\label{eq:trivia-plus}
\max_A (f+h) < \sup_B (g+h) \qquad {\rm unless} \qquad h\equiv-\infty \quad\text{on }\quad A.
\end{equation}
\end{lemma}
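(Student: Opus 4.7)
The plan is to use upper semicontinuity plus compactness to produce a maximizer of $f+h$ on $A$, and then to promote the pointwise strict inequality $f<g$ to the desired strict inequality by evaluating at a suitably chosen point of $A\subseteq B$. The only technical care needed is the bookkeeping around the value $-\infty$, since the functions are $\uR$-valued.

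First I would note that since $f$ and $h$ are upper semicontinuous with values in $\uR=\RR\cup\{-\infty\}$, their sum $f+h$ is well-defined and upper semicontinuous on $D$. Because $A$ is nonempty and compact, the extended-valued USC function $f+h$ attains its supremum on $A$; pick $t_0\in A$ with $M:=\max_A(f+h)=f(t_0)+h(t_0)\in\uR$.

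Next I split into two cases according to whether $M$ is finite. If $M>-\infty$, then $f(t_0)$ and $h(t_0)$ are both finite, and the hypothesis $f(t_0)<g(t_0)$ forces $g(t_0)\in\RR$ as well, giving
\[
g(t_0)+h(t_0)>f(t_0)+h(t_0)=M.
\]
Since $t_0\in A\subseteq B$, we conclude $\sup_B(g+h)\geq g(t_0)+h(t_0)>M$, as required. If instead $M=-\infty$, then at every point of $A$ at least one of $f, h$ equals $-\infty$. Using the hypothesis that $h\not\equiv-\infty$ on $A$, choose $t_1\in A$ with $h(t_1)>-\infty$; at this point the constraint $g(t_1)>f(t_1)\geq-\infty$ forces $g(t_1)\in\RR$ (for no element of $\uR$ is strictly greater than $-\infty$ without being finite), and hence $g(t_1)+h(t_1)\in\RR>-\infty=M$. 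Since $t_1\in A\subseteq B$, again $\sup_B(g+h)\geq g(t_1)+h(t_1)>M$.

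The only conceivable obstacle is that one might worry about losing the strict inequality when passing from $f(t_0)<g(t_0)$ at a single point to the inequality $\max_A(f+h)<\sup_B(g+h)$; this is precisely what the compactness/USC hypothesis on $A$ is for, guaranteeing that the left-hand maximum is actually \emph{attained} at some point of $A$, rather than only approached as a supremum. Everything else is a routine case split on whether $M=-\infty$.
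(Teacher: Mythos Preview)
Your proof is correct and follows essentially the same approach as the paper: use upper semicontinuity and compactness to obtain a maximizer of $f+h$ on $A$, then exploit the strict pointwise inequality $f<g$ at a suitable point of $A$. The only cosmetic difference is the case split---you split on whether $M=-\infty$, while the paper splits on whether $h$ is $-\infty$ at the maximizer---but the underlying argument is the same.
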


\begin{proof} It is obvious that $\sup_A (f+h) \le \sup_B (g+h)$.
If $A$ is compact, $f+h$ attains its supremum at some point $a\in A$. If $h(a)=-\infty$, then also $\max_A (f+h)=f(a)+h(a)=-\infty$, and the strict inequality in \eqref{eq:trivia-plus} follows, unless $h+g \equiv -\infty$ all over $B$, hence in particular all over $A$. In this case, however, we must have $h\equiv -\infty$ all over $A$, since the strict inequality in the condition \eqref{eq:trivia} entails that $g>-\infty$ on $A$. Therefore the statement \eqref{eq:trivia-plus} is proved whenever $h(a)=-\infty$.
Now, if $h(a)>-\infty$ is finite, then we necessarily have $\max_A (f+h)=f(a)+h(a)<g(a)+h(a)\le \sup_B (g+h)$, and we are done also in this case. The proof is complete.
\end{proof}

\begin{remark}
The upper semicontinuity of the field function $J$ is needed
for the validity of the previous lemma,
on which our arguments rely heavily.
This is why we need to assume this
upper semicontinuity  in the main results.
\end{remark}

\begin{theorem}[\textbf{Minimax equioscillation}]\label{thm:EquiThm}
Let $n\in \NN$, let $K$ be a singular, strictly concave and strictly monotone  \eqref{cond:smonotone} kernel function and let $J:[0,1]\to \uR$ be an upper semicontinuous $n$-field function.

For $j=1,\dots,n$ consider $r_j>0$ and the sum of translates function as in \eqref{eq:Fsum}.

Then there is a minimum point $\ww\in \overline{S}$ of $\mol$ in $\oS$ (a minimax point), i.e.,
\begin{equation}\label{eq:MSpoint}
M(\overline{S})=
\inf_{\xx\in \overline{S}} \mol(\xx)=\inf_{\xx\in \overline{S}} \max_{j=0,1,\ldots,n}m_j(\xx)=\mol(\ww):=\max_{j=0,1,\ldots,n}m_j(\ww).
\end{equation}
Each such minimum point $\ww\in \overline{S}$ is an \emph{equioscillation point}, i.e., it satisfies
\begin{equation}\label{eq:equioscillation}
m_{0}(\ww)=m_{1}(\ww)=\ldots=m_{n}(\ww).
\end{equation}
Furthermore, the point $\ww$ is non-singular, i.e., belongs to the regularity set $Y$, so in particular to the open simplex $S$.
\end{theorem}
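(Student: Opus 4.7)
The plan is to establish the three claims in order: existence, equioscillation, and non-singularity.

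For existence, Proposition~\ref{prop:cont} tells me $\mol\colon\oS\to\RR$ is continuous and finite-valued on the compact set $\oS$; hence $\mol$ attains its infimum at some $\ww\in\oS$, giving \eqref{eq:MSpoint}.

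For equioscillation, I would argue by contradiction: suppose $\ww\in\oS$ attains the infimum but $m_k(\ww)<\mol(\ww)$ for some index $k$, and construct a perturbation $\ww'\in\oS$ with $\mol(\ww')<\mol(\ww)$ by \emph{widening} the slack interval $\III_k(\ww)$. In the generic case $1\le k\le n-1$, both bounding nodes are movable; I would shift $y_k$ leftward by $\delta_1>0$ and $y_{k+1}$ rightward by $\delta_2>0$ under the calibration $r_k\delta_1=r_{k+1}\delta_2$, placing us in the regime $\mu=1$ of \eqref{eq:mudef}. Only the $k$-th and $(k{+}1)$-st translates are altered, so Lemma~\ref{lem:widening}(c)+(d) produces the strict pointwise inequality
\[
F(\ww',t)\;<\;F(\ww,t)\qquad\text{for every }t\in[0,y_k-\delta_1]\cup[y_{k+1}+\delta_2,1],
\]
and this set contains every interval $\III_j(\ww')$ with $j\neq k$. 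For the boundary cases $k\in\{0,n\}$ only one of $y_1$ or $y_n$ is free; a single-node shift into the adjacent region yields the analogous strict pointwise decrease on the far side directly from strict monotonicity~\eqref{cond:smonotone}. The pointwise strict inequality is then passed to the interval suprema via the Trivial lemma~\ref{lem:trivi}, applied on each compact $\III_j(\ww')$; upper semicontinuity of $J$ is exactly what makes its hypothesis hold. This gives $m_j(\ww')<\mol(\ww)$ for every $j\neq k$. For the slack index $k$ itself, continuity of $m_k$ (Proposition~\ref{prop:cont}) combined with the strict gap $m_k(\ww)<\mol(\ww)$ keeps $m_k(\ww')<\mol(\ww)$ for all sufficiently small $\delta_1,\delta_2$. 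Therefore $\mol(\ww')<\mol(\ww)$, contradicting the minimax property.

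Once equioscillation is in hand, non-singularity is automatic: if the equioscillation point $\ww$ were singular or degenerate, Remark~\ref{rem:degen-not-eqiosci-gen} (applicable since $K$ is singular) would yield some $m_k(\ww)=-\infty$; by equioscillation all $m_j(\ww)$ equal $-\infty$, so $\mol(\ww)=-\infty$, contradicting the finiteness of $\mol$ on $\oS$ from Proposition~\ref{prop:cont}. Hence $\ww\in Y\subseteq S$. The main difficulty throughout is the passage from pointwise strict inequality for $F$ to strict inequality for the interval suprema $m_j$: because the field $J$ may take the value $-\infty$ on arbitrary subsets of $[0,1]$, this transfer is not automatic, and it is precisely Lemma~\ref{lem:trivi}---relying on compactness of each $\III_j(\ww')$ and on upper semicontinuity of $J$---that closes this gap.
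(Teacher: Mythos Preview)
Your overall strategy---existence by compactness and continuity, equioscillation by a widening perturbation, non-singularity via Remark~\ref{rem:degen-not-eqiosci-gen}---matches the paper's. However, there is a genuine gap in the equioscillation step.

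You assert that for $1\le k\le n-1$ ``both bounding nodes are movable,'' but this is unjustified: at this stage the minimax point $\ww$ is only known to lie in $\oS$, so nodes may coincide with one another or sit at the endpoints. If, for instance, $w_{k-1}=w_k$, then shifting $w_k$ leftward by any $\delta_1>0$ violates the ordering $w_{k-1}\le w_k$ and the perturbed system leaves $\oS$; similarly if $w_{k+1}=w_{k+2}$, or if $w_k=0$ (which can occur even for $k\ge 1$ when $w_1=\dots=w_k=0$). Your case split on the \emph{index} $k$ does not detect these configurations, and you cannot invoke non-degeneracy of $\ww$ here, since that is deduced only \emph{after} equioscillation---the argument would be circular. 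The same problem afflicts your boundary cases $k\in\{0,n\}$: moving $w_1$ rightward requires $w_1<w_2$, which need not hold.

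The paper closes this gap by a different case split (on whether $\III_k(\ww)\subseteq(0,1)$) and, crucially, by moving entire \emph{blocks} of coinciding nodes together: it gathers all indices $i\le k$ with $w_i=w_k$ into $\cL$ with total weight $L=\sum_{i\in\cL}r_i$, and likewise $\cR$, $R$ on the right, then shifts the whole left block by $Rh$ and the whole right block by $Lh$. This keeps the perturbation inside $\oS$ while still achieving the calibration $\mu=1$ needed for Lemma~\ref{lem:widening}(c). The remaining cases $w_k=0$ or $w_{k+1}=1$ are handled by a one-sided block shift using only strict monotonicity. Without this block-moving idea your contradiction argument does not go through for degenerate minimax candidates.
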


\begin{proof}
By continuity of $\mol$, see Proposition \ref{prop:cont}, some minimum point on the compact set $\oS$ must exist. Let $\ww\in\oS$ be any such minimum point. In the following we set first to prove that $\ww$ is an equioscillation point, i.e., $m_j(\ww)=\mol(\ww)$ for $j=0,1,\ldots,n$. Assume for a contradiction that $m_j(\ww)<\mol(\ww)$ for some $j\in \{0,1,\dots,n\}$.

\medskip
\textbf{Case 1.} First we consider the case when $\III_j=[w_j,w_{j+1}] \subseteq (0,1)$, and note that then $0<j<n$, since $w_0=0<w_j\le w_{j+1} <1=w_{n+1}$.
Consider the following sets of indices with positions of the $w_i$ at the left resp.{} right endpoint of $\III_j$:
\[
\cL:=\{ i \le j :\  w_i=w_j \}, \qquad \cR:=\{ i \ge j+1 \ :\  w_i=w_{j+1} \}.
\]
Note that in principle $\III_j$ can be degenerate, i.e.,{} $w_j=w_{j+1}$ can hold, but we have defined the index sets $\cL, \cR$ as disjoint. Further, we set
\[
L:=\sum_{i\in \cL} r_i, \qquad R:=\sum_{i\in \cR} r_i, \qquad \text{and}\qquad \cI:=\{0,\ldots,n+1\} \setminus (\cL\cup \cR).
\]
We apply Lemma \ref{lem:widening} with $\alpha:=w_j-Rh, a:=w_j, b:=w_{j+1}, \beta:=w_{j+1}+Lh, p:=L$ and $q:=R$, with a small, but positive $h>0$, to be specified suitably later on. As now the value of $\mu$ in \eqref{eq:mudef} is exactly 1, the mentioned lemma yields the strict inequality
\begin{equation}\label{eq:wideperturb}
L K(t-(w_j-Rh)) + R K(t-(w_{j+1}+Lh)) < L K(t-w_j) + R K(t-w_{j+1})
\end{equation}
for all points $t \in A :=[0,w_j-Rh] \cup [w_{j+1}+Lh,1]$.

Next, we define a new node system $\ww'$ by $w_i':=w_i-Rh =w_j-Rh=\alpha$ for all $i\in \cL$, $w'_i:=w_i+Lh=w_{j+1}+Lh=\beta$ for all $i\in\cR$, and the rest  unchanged: $w'_i:=w_i$ for $i\in \cI$. Note that if $h$ is smaller than the distance $\rho>0$ of the sets $\{w_i\ :\ i\in \cI\}$ and $\III_j$, then $0=w'_0 \le \dots \le w'_j < w'_{j+1} \le \dots \le 1$ and hence $\ww'\in \oS$. So assume $0<h<\rho$ from now on, and also that $h$ is chosen so small that $m_j(\ww')<\mol(\ww)$ remains in effect (continuity of $m_j$, see Proposition \ref{prop:cont}).

With the new node system $\ww'$ we have  $A=[0,1]\setminus \intt \III_j(\ww')$ and, moreover, inequality \eqref{eq:wideperturb} can be rewritten as
\[
L K(t-w'_j) + R K(t-w'_{j+1}) < L K(t-w_j) + R K(t-w_{j+1}) \quad \text{for } t \in A.
\]
Note that adding $J(t) + \sum_{i\in \cI} r_i K(t-w_i)$ to both sides, the left-hand side becomes $F(\ww',t)$, and the right-hand side becomes $F(\ww,t)$.

Applying the Trivial Lemma \ref{lem:trivi} with $A=[0,1]\setminus \intt \III_j(\ww')$, $B:=[0,1]\setminus \intt \III_j(\ww)$, and $D=[0,1]$ we obtain that $\max_A F(\ww',\cdot) < \max_B F(\ww,\cdot) \le \mol(\ww)$, unless the added expression $J(t) + \sum_{i\in \cI} r_i K(t-w_i)$ is identically $-\infty$ on $A$, in which case the left-hand side $\max_A F(\ww',\cdot)$ is also $-\infty$. In either case we are led to $\max_A F(\ww',\cdot)<\mol(\ww)$.

Taking into account $\max_{\III_j(\ww')} F(\ww',\cdot)=m_j(\ww')<\mol(\ww)$, we thus infer $\mol(\ww')<\mol(\ww)$, which contradicts the choice of $\ww$ as a minimum point of $\mol$. This contradiction proves that $m_j(\ww)=\mol(\ww)$ must hold.

\medskip
\textbf{Case 2.} Let now $0=w_j$. Then $\III_0=[0,w_1]\subseteq [0,w_{j+1}]=\III_j$, hence $m_0(\ww)\le m_j(\ww) <\mol(\ww)$. This implies that $\III_0(\ww)\subseteq [0,1)$ i.e.,{} $w_1<1$, so there is a maximal index $k\le n$ with $w_1=w_k$ and $w_k<w_{k+1}$. (Note that $w_1=\dots=w_k$ may or may not be in the position $0$---this does not matter.)

We consider the new node system $\ww'$ with $w_1'=\dots=w_k'=w_1+h$ (and the rest unchanged). With $0<h<w_{k+1}-w_k$ the new node system $\ww'$ also belongs to $\oS$. As above, for small enough $h$ continuity (Proposition \ref{prop:cont}) furnishes $m_0(\ww')<\mol(\ww)$.

Let now $t\in A:=[w_1+h,1]=[0,1]\setminus \rint \III_0(\ww')$. Taking into account the strict monotonicity condition \eqref{cond:smonotone} and $h>0$, we must have
\[
\sum_{i=1}^k r_i K(t-w_1-h) < \sum_{i=1}^k r_i  K(t-w_1) \qquad \text{for all } t \in A.
\]
Note that the left-hand side may attain $-\infty$ (at $t=w_1+h$), but not the right-hand side for $h>0$. If we add here $J(t)+\sum_{i=k+1}^n r_i K(t-w_i)$ to both sides, then the left-hand side becomes $F(\ww',t)$, and the right-hand side becomes  $F(\ww,t)$. Putting $B:=[w_1,1]=[0,1]\setminus \rint \III_1(\ww)$ an application of the Trivial Lemma \ref{lem:trivi} furnishes $\max_A F(\ww',\cdot) <\max_B F(\ww,\cdot) \le \mol(\ww)$, unless the left-hand side is $-\infty$; in particular, as $\mol(\ww)>-\infty$, in either case we obtain $\max_A F(\ww',\cdot) <\mol(\ww)$.

As above, taking into account $\max_{\III_1(\ww')} F(\ww',\cdot)=m_1(\ww')<\mol(\ww)$, we thus infer $\mol(\ww')<\mol(\ww)$, which is a contradiction with the minimality of $\mol(\ww)$. Therefore, $m_0(\ww)=m_j(\ww)=\mol(\ww)$ follows.

\medskip
\textbf{Case 3.} $w_{j+1}=1$. It is completely analogous to Case 2.

\medskip\noindent Cases 1-3 altogether yield that $\ww$ is an equioscillation point,
as claimed in \eqref{eq:equioscillation}.

\medskip\noindent
Furthermore, if $\III_k(\ww)$ is degenerate, or singular,
then by Remark \ref{rem:degen-not-eqiosci-gen} $m_k(\ww)=-\infty < \mol(\ww)$ must hold.
This contradicts equioscillation, hence is excluded by the above.
That is, $\ww \in Y$, i.e., no interval $\III_k(\ww)$ can be singular.
The proof is complete.
\end{proof}

There exist some maximum points of $\mul$ on $\oS$ by continuity and compactness.
Completely analogously to the above,  we can as well prove the following about these.

\begin{theorem}[\textbf{Maximin equioscillation}]\label{thm:maximinthm} Let $n\in \NN$, let $K$ be a singular, strictly concave and strictly monotone  \eqref{cond:smonotone} kernel function and let $J:[0,1]\to \uR$ be an upper semicontinuous $n$-field function.
For $j=1,\dots,n$ let  $r_j>0$ and  consider the sum of translates function as in \eqref{eq:Fsum}.
If $\yy\in\oS$ is a maximum point of $\mul$ (a maximin point), i.e.,
$\mul(\yy)=m(\oS):=\max_{\oS} \mul$,
then $\yy \in Y\subseteq S$, and $\yy$ is an equioscillation point.
\end{theorem}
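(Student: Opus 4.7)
The plan is to mirror the proof of Theorem \ref{thm:EquiThm} step by step, but reverse the direction of every perturbation: I will narrow the offending interval (rather than widen it) in order to push $\mul$ upward instead of $\mol$ downward. Existence of a maximum point $\yy\in\oS$ follows at once from Proposition \ref{prop:cont} and compactness. The inclusion $\yy\in Y$ comes essentially for free: since $J$ is an $n$-field function we have $Y\neq\emptyset$, so for any $\xx\in Y$ one has $\mul(\yy)\ge \mul(\xx)>-\infty$, which by Remark \ref{rem:degen-not-eqiosci-gen} rules out every degenerate or singular interval at $\yy$, forcing $\yy\in Y\subseteq S$.

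For equioscillation, assume for a contradiction that $m_j(\yy)>\mul(\yy)$ for some $j\in\{0,1,\dots,n\}$. In the interior case $1\le j\le n-1$, since $\yy\in S$ we have $0<y_j<y_{j+1}<1$ strictly; set $y'_j:=y_j+r_{j+1}h$ and $y'_{j+1}:=y_{j+1}-r_j h$ (the narrowing perturbation), keeping all other coordinates fixed. For small $h>0$ this yields $\yy'\in S$. Lemma \ref{lem:widening}(c,d), applied with $\alpha:=y_j$, $a:=y'_j$, $b:=y'_{j+1}$, $\beta:=y_{j+1}$, $p:=r_j$, $q:=r_{j+1}$ (so that $\mu=1$), together with strict concavity of $K$ yields
\[
r_j K(t-y_j)+r_{j+1}K(t-y_{j+1})<r_j K(t-y'_j)+r_{j+1}K(t-y'_{j+1})
\]
for all $t\in [0,y_j]\cup[y_{j+1},1]$. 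Adding $J(t)+\sum_{i\neq j,j+1}r_i K(t-y_i)$ to both sides turns this into $F(\yy,t)<F(\yy',t)$ on the same set, which contains $\III_k(\yy)\subseteq\III_k(\yy')$ for every $k\neq j$. The Trivial Lemma \ref{lem:trivi}, used with $A=B=\III_k(\yy)$ and $h:=0$, then delivers $m_k(\yy')\ge \max_{\III_k(\yy)}F(\yy',\cdot)>m_k(\yy)\ge \mul(\yy)$ strictly for every $k\neq j$, while continuity of $m_j$ (Proposition \ref{prop:cont}) together with the strict inequality $m_j(\yy)>\mul(\yy)$ keeps $m_j(\yy')>\mul(\yy)$ for $h$ sufficiently small. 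Thus $\mul(\yy')>\mul(\yy)$, contradicting maximality.

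The boundary cases $j=0$ and $j=n$ are treated analogously. For $j=0$, set $y'_1:=y_1-h$ for small $h>0$: strict monotonicity of $K$ on $[-1,0)$ produces $F(\yy',\cdot)<F(\yy,\cdot)$ on $[0,y'_1]$ (so $m_0$ decreases yet stays above $\mul(\yy)$ by continuity), while strict monotonicity of $K$ on $(0,1]$ produces $F(\yy,\cdot)<F(\yy',\cdot)$ on $[y_1,1]$ (so every $m_k$ with $k\ge 1$ strictly increases via the Trivial Lemma applied over $\III_k(\yy)\subseteq\III_k(\yy')$). The case $j=n$ is symmetric.

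I expect the main obstacle to be administrative rather than conceptual: identifying the correct assignment $(\alpha,a,b,\beta,p,q)$ under the narrowing perturbation so that $\mu=1$ still holds, and remembering to apply the Trivial Lemma on the \emph{old} interval $\III_k(\yy)$, where the pointwise strict inequality of $F$-values has been secured, and only afterwards transport the conclusion to the new interval via the inclusion $\III_k(\yy)\subseteq\III_k(\yy')$.
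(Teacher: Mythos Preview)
Your proof is correct and follows essentially the same route as the paper's: establish $\yy\in Y$ from $\mul(\yy)\ge\mul(\xx)>-\infty$ for some $\xx\in Y$, then narrow the offending interval via Lemma~\ref{lem:widening}(c) with $\mu=1$ and invoke continuity of $m_j$ together with the Trivial Lemma. (The paper uses the perturbation $y_j+h/r_j$, $y_{j+1}-h/r_{j+1}$ rather than your $y_j+r_{j+1}h$, $y_{j+1}-r_jh$, but both give $\mu=1$.)

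One small technical slip: you claim the pointwise strict inequality $F(\yy,t)<F(\yy',t)$ on $[0,y_j]\cup[y_{j+1},1]$ and then apply the Trivial Lemma with $h:=0$. This fails at points where $J(t)=-\infty$, since there both sides equal $-\infty$. The fix is to apply Lemma~\ref{lem:trivi} exactly as the paper does: take $f,g$ to be the two pure kernel expressions from Lemma~\ref{lem:widening} and set $h:=J+\sum_{i\ne j,j+1}r_iK(\cdot-y_i)$; then the escape clause ``unless $h\equiv-\infty$ on $A$'' is ruled out by $\yy\in Y$ (so $m_k(\yy)>-\infty$), and the conclusion $m_k(\yy)<m_k(\yy')$ follows. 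The same correction applies in the boundary cases.
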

\begin{proof}
By Theorem \ref{thm:EquiThm} we have a minimax point $\ww\in Y$. Thus for any maximin point $\yy\in\oS$ we have $\mul(\yy)=m(S)\ge\mul(\ww)=\mol(\ww)=\max_{[0,1]} F(\ww,\cdot) >-\infty$, hence for all $j=0,1,\ldots,n$ also $m_j(\yy)>-\infty$. We conclude $\yy\in Y$. By Remark \ref{rem:degen-not-eqiosci-gen} no degenerate intervals may exist among the $\III_j(\yy)$. This somewhat simplifies our considerations as compared to the proof of Theorem \ref{thm:EquiThm}.

\medskip It remains to prove that once $\yy\in \oS$ is a maximin point, we necessarily have that it is an equioscillation point, i.e., $m_j(\yy)=\mul(\yy)$ for $j=0,1,\ldots,n$. For the proof we assume for a contradiction that there exists some $j\in \{0,1,\dots,n\}$ with $m_j(\yy)>\mul(\yy)$.

\medskip
\textbf{Case 1.} First let $\III_j(\yy)=[y_j,y_{j+1}] \subseteq (0,1)$. Note that then $0<j<n$, as $y_0=0<y_j< y_{j+1} <1=y_{n+1}$ (the inequality $y_j<y_{j+1}$ has been clarified above).

We apply Lemma \ref{lem:widening} (c) with $\alpha:=y_j$, $a:=y_j+h/r_j$, $b:=y_{j+1}-h/r_{j+1}$, $\beta:=y_{j+1}$, $p:=r_j$ and $q:=r_{j+1}$, where $h>0$ is so small that $a<b$. We obtain for all $t\in A:=[0,1] \setminus \rint \III_j(\yy) = \cup_{i=0, i\ne j}^n \III_i(\yy)$ that
\begin{equation}\label{eq:maximin2}
r_j K(t-y_j) + r_{j+1} K(t-y_{j+1}) < r_j K(t-(y_j+\tfrac{h}{r_j})) + r_{j+1} K(t-(y_j+\tfrac{h}{r_{j+1}})).
\end{equation}
We define a new node system $\yy'$  by setting $y_j':=y_j+h/r_j$ and $y_{j+1}':=y_{j+1}-h/r_{j+1}$ and the rest of the nodes unchanged: $y'_i:=y_i$ for $i\ne j,\: j+1$. By the choice of $h>0$ we have $\yy'\in S$. By taking, if necessary, a smaller $h>0$, we can ensure   $m_j(\yy')>\mul(\yy)$ (continuity of $m_j$, Proposition \ref{prop:cont}).
Now adding $J(t) + \sum_{i\ne j, j+1} r_i K(t-y_i)$ to both sides of \eqref{eq:maximin2}, the left-hand side becomes $F(\yy,t)$, and the right-hand side becomes $F(\yy',t)$.

Let now $i\in\{0,1,\dots,n\}\setminus \{ j\}$ be any index and consider $m_i(\yy')$ and $m_i(\yy)>-\infty$ (recall $\yy$ is non-singular).
 The Trivial Lemma \ref{lem:trivi} with  $A:=\III_i(\yy) \subseteq B:=\III_i(\yy')$ yields $-\infty < \mul(\yy) \le m_i(\yy)=\max_A F(\yy,t) < \max_B F(\yy',\cdot)= m_i(\yy')$.
As we have already ensured $\mul(\yy)<m_j(\yy')$, we  find $\mul(\yy)<m_i(\yy')$ for all $i\in \{0,1,\dots,n\}$, whence we conclude $\mul(\yy)<\mul(\yy')$, a contradiction with the maximality of $\mul(\yy)$. We arrive at $m_j(\yy)=\mul(\yy)$ in this case.

\medskip
\textbf{Case 2.} Suppose $y_j=0$. As $\yy$ is a non-degenerate node system, we must have $j=0$ and $\III_j(\yy)=\III_0(\yy)=[0,y_1]$, $y_1>0$.
We will consider the new node system $\yy'$ with $y_1'=y_1-h$ and the rest unchanged: $y'_i=y_i$ for $i=2,\ldots,n$. With $0<h<y_1$ 
the new node system $\yy'$ also belongs to $S$. As above, for small enough $h>0$, 
continuity of $m_0$ (see Proposition \ref{prop:cont}) furnishes $m_0(\yy')>\mul(\yy)$.

Let $i\in\{1,2,\dots,n\}$ be arbitrary and consider $\III_i(\yy)$ and $\III_i(\yy')$. 
Obviously, $\III_i(\yy)\subseteq \III_i(\yy')$. Further, $-\infty<\mul(\yy)\le m_i(\yy)$. 
In view of strict monotonicity of $K$, we obviously have $r_1K(t-y_1) < r_1K(t-y_1+h)$ for every $t\in \III_i(\yy)$. 
Adding $J(t)+\sum_{i=2}^n  r_iK(t-y_i)$ to this inequality, the left-hand side becomes $F(\yy,t)$  and the right-hand side becomes $F(\yy',t)$. 
Applying the Trivial Lemma \ref{lem:trivi} with $A:=\III_i(\yy)$ and $B:=\III_i(\yy')$ 
we obtain $-\infty<\mul(\yy)\le m_i(\yy) <m_i(\yy')$ for each $i=1,2,\ldots,n$. 
In fact, also $-\infty<\mul(\yy)<m_0(\yy')$ was guaranteed above, which then furnishes  $\mul(\yy)<\min_i m_i(\yy')=\mul(\yy')$, contradicting the maximality of $\mul(\yy)$.
Therefore, $m_0(\yy)=\mul(\yy)$.

\medskip
\textbf{Case 3.} The case $y_{j+1}=1$ is completely analogous to Case 2.

\medskip\noindent Cases 1-3 altogether yield that $\yy$ is an equioscillation point, as claimed.
\end{proof}

\begin{corollary}\label{cor:minimaxmaximin-spec} Let $K$ be a singular \eqref{cond:infty}, strictly concave and (strictly) monotone \eqref{cond:smonotone} kernel function, and let $J$ be an upper semicontinuous field function.

Then $M(S)=m(S)$ and there exists a unique equioscillation point $\ww\in \oS$, which, in fact, belongs to $Y\subseteq S$. This point $\ww$ is the unique minimax point in $\oS$, i.e., $\mol(\ww)=M(S)$, and it is the unique maximin point in $\oS$, i.e., $\mul(\ww)=m(S)$.
In particular, the so-called Sandwich Property holds: for any node system $\xx \in S$ we have $\mul(\xx)\le M(S)=m(S)\le \mol(\xx)$.
\end{corollary}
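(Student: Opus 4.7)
The plan is to reduce the entire statement to a single application of the homeomorphism Theorem \ref{thm:homeo3}. The key observation is that a point $\ww \in Y$ is an equioscillation point precisely when $m_0(\ww) = \cdots = m_n(\ww)$, i.e., when $\Phi(\ww) = 0 \in \RR^n$; so equioscillation points correspond bijectively to the zero set of $\Phi$.

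First I would verify that every equioscillation point in $\oS$ automatically lies in $Y$. Indeed, if $\ww \in \oS \setminus Y$ were an equioscillation point, then some $m_j(\ww) = -\infty$, whence, by equality of all interval maxima, every $m_j(\ww) = -\infty$ and $\mol(\ww) = -\infty$. This would contradict the finiteness of $\mol$ on $\oS$ recorded just after \eqref{eq:Fsum}, which is precisely what the $n$-field condition on $J$ ensures. Thus equioscillation points in $\oS$ coincide with $\Phi^{-1}(\{0\}) \subseteq Y$, and by Theorem \ref{thm:homeo3} this preimage is a singleton $\{\ww\}$ with $\ww \in Y \subseteq S$. This proves existence and uniqueness of the equioscillation point.

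Next I would identify $\ww$ with both the minimax and the maximin point. By Theorem \ref{thm:EquiThm}, every minimax point of $\mol$ on $\oS$ equioscillates and lies in $Y$; by the uniqueness just established, the minimax point is unique and equals $\ww$, so $\mol(\ww) = M(\oS) = M(S)$. Theorem \ref{thm:maximinthm} furnishes the entirely analogous conclusion for the maximin point, giving $\mul(\ww) = m(\oS) = m(S)$. Since $\ww$ equioscillates, $\mol(\ww) = m_0(\ww) = \cdots = m_n(\ww) = \mul(\ww)$, forcing $M(S) = m(S)$.

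Finally, the Sandwich Property follows by bare definition: for every $\xx \in S$ one has $\mul(\xx) \le \sup_{S}\mul = m(S)$ and $\mol(\xx) \ge \inf_{S}\mol = M(S)$, and these two bounds coincide by the previous step. I do not expect a real obstacle in this argument: the substantive work is already packaged into Theorems \ref{thm:EquiThm}, \ref{thm:maximinthm}, and \ref{thm:homeo3}. The only conceptual point that needs to be checked explicitly is the short verification that equioscillation forces $\ww \in Y$, without which the homeomorphism would not directly apply to pin down uniqueness on all of $\oS$.
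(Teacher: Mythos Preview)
Your proposal is correct and follows essentially the same route as the paper: both arguments note that equioscillation forces membership in $Y$ (via $\mol(\ww)>-\infty$), invoke Theorem~\ref{thm:homeo3} to get a unique preimage of $\mathbf{0}$ under $\Phi$, and then use Theorems~\ref{thm:EquiThm} and~\ref{thm:maximinthm} to identify the minimax and maximin points with this unique equioscillation point. The only cosmetic difference is ordering; the substance is identical.
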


\begin{proof}
The previous two theorems give that both minimax and maximin points must be equioscillation node systems.
Now, points in $\oS\setminus Y$ cannot be equioscillation
points, as degenerate or singular points $\xx$
satisfy $m_i(\xx)=-\infty$ for some $i\in \{0,1,\dots,n\}$
while $\mol(\xx)>-\infty$.
By Theorem \ref{thm:homeo3} the difference mapping $\Phi$ is a homeomorphism between $Y$ and $\RR^n$.
In particular, there exists exactly one pre-image
of $\textbf{0}$, i.e.,{}
only one equioscillation point in $Y$,
and hence also in $\oS$ in general.
As a result, both the maximin
and minimax points must coincide with this unique
equioscillation point (Theorems \ref{thm:EquiThm} and \ref{thm:maximinthm}).
\end{proof}

\begin{corollary}\label{cor:minimaxmaximin-infty} Let $K$ be a singular \eqref{cond:infty} and monotone \eqref{cond:monotone} kernel function, and let $J$ be an upper semicontinuous field function.

Then $M(S)=m(S)$ and there exists some node system $\ww\in \oS$, also belonging to $Y$, 
such that it is an equioscillation point and $\mul(\ww)=m(S)=M(S)=\mol(\ww)$.

In particular, the so-called Sandwich Property holds: for any node system $\xx \in S$ we have $\mul(\xx)\le M(S)=m(S)\le \mol(\xx)$, and $M(S)=m(S)$ is the unique equioscillation value.

If in addition the kernel $K$ satisfies \eqref{cond:smonotone}, then the point $\ww$ is the unique equioscillation point.
\end{corollary}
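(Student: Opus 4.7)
The plan is to reduce to the strictly concave, strictly monotone case handled in Corollary \ref{cor:minimaxmaximin-spec} by an approximation argument, and to obtain the final uniqueness assertion directly from the homeomorphism Theorem \ref{thm:homeo3}. The main obstacle I anticipate is that with merely \eqref{cond:monotone} and mere concavity the perturbation Lemma \ref{lem:widening} delivers only weak inequalities in \eqref{eq:wideninglemma}, so the contradiction arguments used in Theorems \ref{thm:EquiThm} and \ref{thm:maximinthm} no longer bite. I propose to bypass this by adding a strictly concave, strictly monotone and---crucially---\emph{uniformly bounded} perturbation, which makes the passage to the limit elementary.

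Concretely, I would take the auxiliary kernel $P(t):=-(1-|t|)^2$ on $[-1,1]\setminus\{0\}$, which is continuous, strictly concave on each of $(-1,0)$ and $(0,1)$, strictly monotone in the sense of \eqref{cond:smonotone}, and bounded with $|P|\le 1$. Setting $K_\varepsilon:=K+\varepsilon P$ for $\varepsilon>0$, the kernel $K_\varepsilon$ is singular (since $K$ is singular and $P$ is bounded), strictly concave, and strictly monotone, so Corollary \ref{cor:minimaxmaximin-spec} applies to it. Boundedness of $P$ yields the uniform estimate $|F_\varepsilon-F|\le \varepsilon R$ on $\oS\times[0,1]$ with $R:=\sum_{j=1}^n r_j$, and hence $|m_j^\varepsilon-m_j|$, $|\mol_\varepsilon-\mol|$, $|\mul_\varepsilon-\mul|$ are each at most $\varepsilon R$ on $\oS$; in particular $M_\varepsilon\to M(S)$ and $m_\varepsilon\to m(S)$ as $\varepsilon\downarrow 0$.

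For each $\varepsilon>0$ Corollary \ref{cor:minimaxmaximin-spec} produces a unique equioscillation point $\ww_\varepsilon\in Y$ with $\mul_\varepsilon(\ww_\varepsilon)=M_\varepsilon=m_\varepsilon=\mol_\varepsilon(\ww_\varepsilon)$. By compactness of $\oS$ I would extract a subsequence $\ww_{\varepsilon_k}\to\ww$, and combine the uniform closeness above with the continuity of $m_j$ (Proposition \ref{prop:cont}) to get
\[
m_j(\ww)=\lim_k m_j^{\varepsilon_k}(\ww_{\varepsilon_k})=\lim_k M_{\varepsilon_k}=M(S)\qquad(j=0,1,\dots,n).
\]
So $\ww$ is an equioscillation point with $\mul(\ww)=\mol(\ww)=M(S)$; consequently $m(S)=M(S)$, and finiteness of $M(S)$ together with Remark \ref{rem:degen-not-eqiosci-gen} forces $\ww\in Y$. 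The Sandwich Property and the uniqueness of the equioscillation value follow formally: any equioscillation point $\ww'$ with common value $c$ satisfies $c=\mul(\ww')\le m(S)=M(S)\le\mol(\ww')=c$, so $c=M(S)=m(S)$.

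For the final clause, assuming \eqref{cond:smonotone}, Theorem \ref{thm:homeo3} gives that $\Phi:Y\to\RR^n$ is a homeomorphism, so $\Phi^{-1}(\mathbf{0})$ is a singleton. Since equioscillation points are precisely the zeros of $\Phi$ in $Y$---and, by Remark \ref{rem:degen-not-eqiosci-gen}, cannot lie outside $Y$---the equioscillation point is unique.
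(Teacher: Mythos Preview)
Your proof is correct and follows essentially the same route as the paper: perturb $K$ by a small multiple of a bounded, strictly concave, strictly monotone auxiliary function so that Corollary~\ref{cor:minimaxmaximin-spec} applies, then pass to a subsequential limit using the uniform estimate and Proposition~\ref{prop:cont}; the final uniqueness under \eqref{cond:smonotone} comes from Theorem~\ref{thm:homeo3} exactly as in the paper. The only cosmetic difference is your choice of perturbation $P(t)=-(1-|t|)^2$ in place of the paper's $\eta\sqrt{|t|}$; yours gives a two-sided bound $|F_\varepsilon-F|\le \varepsilon R$ directly, which slightly streamlines the limit argument, but the substance is identical.
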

\begin{proof}
For the proof, we first apply the previous corollary in the situation with the same field function $J$ and the modified kernel functions $K^{(\eta)}(t):=K(t)+\eta \sqrt{|t|}$. If $\eta>0$, then $K^{(\eta)}$ is strictly concave and strictly monotone, thus Corollary \ref{cor:minimaxmaximin-spec} applies and provides node systems $\ww_\eta$ with the three asserted properties: $m^{(\eta)}(S)=M^{(\eta)}(S)=\mul^{(\eta)}(\ww_\eta)=\mol^{(\eta)}(\ww_\eta)$, where the notation refers to the corresponding quantities with the use of the kernel $K^{(\eta)}$. With a similar notation for the sum of translates function and putting $R:=\sum_{i=1}^n r_i$, it is obvious that $F(\xx,t) \le F^{(\eta)}(\xx,t) \le F(\xx,t)+\eta R$ for all $\xx \in \oS$ and $t\in [0,1]$. Therefore also $m_i(\xx)\le m_i^{(\eta)}(\xx)\le m_i(\xx)+\eta R$  and hence $m^{(\eta)}_i(\xx)\to m_i(\xx)$ for every $i=0,1,\dots, n$ and $\xx\in \oS$.
By compactness of $\oS$ we can take a convergent subsequence $(\ww_{1/{k_\ell}})$ of $(\ww_{1/k})$ with limit $\ww:=\lim_{\ell \to \infty} \ww_{1/k_\ell}$.  Moreover, by continuity of $m_i$ (see Proposition \ref{prop:cont}) we obtain
\begin{align*}
m_i(\ww)&=\lim_{\ell\to \infty} m_i(\ww_{1/k_\ell})\leq \liminf_{\ell\to \infty}m_i^{(1/k_\ell)}(\ww_{1/k_\ell})\leq \limsup_{\ell\to \infty}m_i^{(1/k_\ell)}(\ww_{1/k_\ell})\\
&\leq \limsup_{\ell\to \infty} \Bigl(m_i(\ww_{1/k_\ell})+R/k_\ell\Bigr)=m_i(\ww),
\end{align*}
that is  $\lim_{\ell\to\infty} m_i^{(1/k_\ell)}(\ww_{1/k_\ell}) =m_i(\ww)$. Therefore, $\ww$ is an equioscillation point in the case of the kernel $K$, and hence $\ww\in Y$.
 Let $\xx$ be a minimum point of $\mol$ on $\oS$. Then
 \[
M(S)=\mol(\xx)=\lim_{\eta\to 0} \mol^{(\eta)}(\xx)\geq\limsup_{\eta\to 0} M^{(\eta)}(S)\geq \liminf_{\eta\to 0} M^{(\eta)}(S)\geq M(S),
 \]
where the last inequality obviously follows from $K^{(\eta)}\geq K$.
Therefore, $M(S)=\lim_{\eta\to 0} M^{(\eta)}(S)$, whence we can also conclude
\[
M(S)=\lim_{\ell\to \infty} M^{(1/k_\ell)}(S)=\lim_{\ell\to \infty} \mol^{(1/k_\ell)}(\ww_{1/k_\ell})=\mol(\ww),
\]
i.e., $\ww$ is a minimum point of $\mol$. Since for the $\eta$-perturbed kernel we have $M^{(\eta)}(S)=m^{(\eta)}(S)$, we infer that
\[\lim_{\eta\to 0}\mul^{(\eta)}(\ww_\eta)=\lim_{\eta\to 0}
m^{(\eta)}(S)=\lim_{\eta\to 0} M^{(\eta)}(S)\]
 exists, and equals $M(S)$.

On the other hand, $m(S)\leq m^{(\eta)}(S)=\mul^{(\eta)}(\ww_\eta)$, so
\begin{align*}
m(S)\leq \lim_{\eta\to 0}  m^{(\eta)}(S)=\lim_{\eta\to 0}\mul^{(\eta)}(\ww_\eta) =\lim_{\ell\to \infty}\mul^{(1/k_\ell)}(\ww_{1/k_\ell}) =\mul(\ww)\leq m(S).
\end{align*}
Hence $\lim_{\eta\to 0}  m^{(\eta)}(S)= m(S)$ and $\mul(\ww)=m(S)$, i.e., $\ww$ is a maximum point of $\mul$.

Finally, uniqueness of the equioscillation point under condition \eqref{cond:smonotone} follows from Theorem \ref{thm:homeo3}.
\end{proof}

\section{Intertwining}
\label{sec:inter}

The main result of this section,  Theorem
\ref{thm:Intertwining}, shows that for different node systems $\xx,\yy\in Y$ it is not possible to have $m_j(\xx)\leq m_j(\yy)$ for every $j\in \{0,1,\dots,n\}$, i.e., majorization cannot occur (cf.{} \cite{TLMS2018} for the terminology). 
In other words, for two different node systems $\xx,\yy\in Y$ both $m_j(\xx)<m_j(\yy)$ and $m_i(\xx)>m_i(\yy)$ holds for some $i,j\in \{0,1,\dots,n\}$, a property that is natural to be called \emph{intertwining}.
For the proof, we need
the following
perturbation type lemma
which is interesting on
its own.

\begin{lemma}[\textbf{General maximum perturbation lemma}]\label{lem:gen-max-perturbation}
Let $n\in\NN$ be a natural number, let $r_1,\dots, r_n>0$, let $J:[0,1]\to \uR$ be an upper semicontinuous $n$-field function, and
let $K$ be a kernel function satisfying the monotonicity condition \eqref{cond:monotone}.
Consider the sum of translates function $F$ as in \eqref{eq:Fsum}.

Let $\ww \in S$ be a non-degenerate node system,
and let $\cI \cup \cJ =\{0,1,\ldots,n\}$ be a non-trivial partition.
Then there exists $\ww'\in S\setminus \{\ww\}$ arbitrarily close to $\ww$ with
\begin{align}
\label{eq:genpertlemma-function-I}
&F(\ww',t)\le F(\ww,t) \text{ for all } t \in \III_i(\ww') \quad \text{and} \quad \III_i(\ww')\subseteq \III_i(\ww) \quad \text{for all}\  i\in\cI;\\
\label{eq:genpertlemma-function-J}
& F(\ww',t)\ge F(\ww,t)  \text{ for all } t \in \III_j(\ww)  \quad \text{and} \quad \III_j(\ww')\supseteq \III_j(\ww) \quad \text{for all}\  j\in\cJ.
\end{align}
As a result, we also have
\begin{equation}\label{eq:genpertlemma-max}
m_i(\ww')\le m_i(\ww) \text{ for }i\in\cI\quad \text{and} \quad   m_j(\ww')\ge m_j(\ww) \text{ for }j\in\cJ
\end{equation}
for the corresponding interval maxima.

Moreover, if $K$ is strictly concave (and hence by condition \eqref{cond:monotone} also strictly monotone), then the inequalities in \eqref{eq:genpertlemma-function-I} and \eqref{eq:genpertlemma-function-J} are strict for all points in the respective intervals where $J(t)\ne - \infty$.

Furthermore, the inequalities in \eqref{eq:genpertlemma-max} are also strict for all indices $k$ with non-singular $\III_k(\ww)$; in particular, for all indices if $\ww\in Y$.
\end{lemma}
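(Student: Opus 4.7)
The plan is to perturb only the \emph{boundary nodes} of the partition---those $w_k$ with $\III_{k-1}$ and $\III_k$ in different classes---moving each into the $\cI$-side (rightward at a $\cJ\to\cI$ boundary, leftward at an $\cI\to\cJ$ boundary) while keeping every interior-of-run node fixed. This direction rule alone already forces the containment inclusions $\III_j(\ww')\supseteq\III_j(\ww)$ for $j\in\cJ$ and $\III_i(\ww')\subseteq\III_i(\ww)$ for $i\in\cI$: the left endpoint of every $\cJ$-interval moves weakly left (or stays), its right endpoint weakly right (or stays), and the opposite for $\cI$-intervals. The magnitudes are next fixed so that for every maximal run $R$ of the partition, the two outer boundary nodes of $R$ have equal mass-displacements $r_k\cdot|\Delta w_k|$, realising the $\mu=1$ balance of Lemma~\ref{lem:widening}\ref{mudef:partc} for the perturbation pair attached to $R$; end-runs abutting $\{0\}$ or $\{1\}$ contribute just one moving node and rely on parts \ref{mudef:parta}--\ref{mudef:parte} of that lemma.

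With this setup, because the interior-of-run nodes are fixed, the elementary identity $F(\ww',t)-F(\ww,t)=\sum_s r_s[K(t-w'_s)-K(t-w_s)]$ groups into a decomposition $F(\ww',t)-F(\ww,t)=\sum_{R}\Delta F_R(t)$, the sum ranging over maximal runs $R$, where each $\Delta F_R(t)$ is the two-node kernel change produced by the pair of moving nodes of $R$ alone. Lemma~\ref{lem:widening}\ref{mudef:partc} applied to that pair, with the $\mu=1$ balance arranged above, pins down the sign of $\Delta F_R(t)$ on the complement of the widened (respectively new shrunken) interval of $R$, and Lemma~\ref{lem:widening}\ref{mudef:parte} pins down the opposite sign in that interval itself. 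For a point $t\in\III_k$ belonging to a run $R^\ast$, the own contribution $\Delta F_{R^\ast}(t)$ carries the desired sign; contributions from runs of the \emph{opposite} class also carry the desired sign; only the cross-contributions from runs of the \emph{same} class as $R^\ast$ carry the wrong sign.

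Controlling those same-class cross-contributions is the principal obstacle, and it is exactly here that the singularity condition \eqref{cond:infty} is used. At points $t$ that lie within a small neighbourhood of the boundary of $\III_k(\ww)$ (respectively $\III_k(\ww')$) one has $F(\ww,t)\to-\infty$ (resp.\ $F(\ww',t)\to-\infty$), and the function inequality is then trivial; on the complement, the own contribution $\Delta F_{R^\ast}(t)$ has strict sign with magnitude linear in $\xi_{R^\ast}$ and bounded below by a strictly positive factor depending only on $K$ and on the local node geometry, whereas same-class cross-contributions from distant runs are of order $\xi_{R'}$ with coefficients that stay bounded uniformly on the compact set in question. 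Taking all run mass-displacements $\xi_R$ of comparable and sufficiently small size, the own contribution dominates uniformly on $\III_k$, and the pointwise inequalities \eqref{eq:genpertlemma-function-I}, \eqref{eq:genpertlemma-function-J} follow. The max inequalities \eqref{eq:genpertlemma-max} are obtained by taking $\sup$ of both sides. Under strict concavity, part \ref{mudef:partd} of Lemma~\ref{lem:widening} renders the ingredient inequalities strict wherever the perturbed pair contributes non-trivially, and the Trivial Lemma~\ref{lem:trivi}---applied exactly as in Case~1 of the proofs of Theorems~\ref{thm:EquiThm} and \ref{thm:maximinthm}---promotes these to the asserted strict inequalities wherever $J(t)\ne-\infty$, and further to strict max inequalities for every index $k$ with $\III_k(\ww)$ non-singular (in particular for all indices once $\ww\in Y$).
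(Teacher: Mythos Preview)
Your approach has a genuine gap. The lemma does \emph{not} assume the singularity condition \eqref{cond:infty}; it assumes only monotonicity \eqref{cond:monotone}. Yet you write that ``it is exactly here that the singularity condition \eqref{cond:infty} is used'' to control the same-class cross-contributions. Since \eqref{cond:infty} is not among the hypotheses, this step is unjustified, and without it your domination argument (own contribution of order $\xi_{R^\ast}$ beats cross-contributions of order $\xi_{R'}$) has no foothold: both are linear in the common step size, and you give no reason why the coefficients compare favourably uniformly on~$\III_k$.

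The paper's proof shows that no such domination argument is needed. The key idea you are missing is that the pairing of moved nodes need not be the fixed run-based one; it can be \emph{different for each target interval} $\III_k$. In the alternating case (all runs singletons) the paper moves every node by $w'_\ell=w_\ell-(-1)^\ell h/r_\ell$, and then, for $t$ in an even-indexed interval $\III_{2k}$, groups the perturbed nodes into consecutive pairs $(w_{2\ell-1},w_{2\ell})$, while for $t$ in an odd-indexed interval it regroups them as $(w_{2\ell},w_{2\ell+1})$ plus endpoint singletons. With $\mu=1$ in Lemma~\ref{lem:widening}\ref{mudef:partc} (respectively monotonicity for the singletons), \emph{every} summand then carries the correct sign on the interval in question, so the pointwise inequalities \eqref{eq:genpertlemma-fI}--\eqref{eq:genpertlemma-fJ} follow outright, with no cross-term to dominate. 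Your fixed decomposition by runs forfeits exactly this flexibility: the ``wrong-sign'' cross-contribution you detect from a distant same-class run is, under the correct regrouping, absorbed into a pair whose Lemma~\ref{lem:widening} estimate already points the right way. The non-alternating case is handled in the paper by a short induction on $n$ (freezing one node from a block of neighbouring same-class indices into the field function), reducing to the alternating case.
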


\begin{proof} Before the main argument, we observe that the assertion in \eqref{eq:genpertlemma-max} is indeed a trivial consequence of the previous inequalities \eqref{eq:genpertlemma-function-J} and \eqref{eq:genpertlemma-function-I}, so we need not give a separate proof for that.

A second important observation is the following. With the pure sum of translates function $f$ we write $F(\ww,t)=f(\ww,t)+J(t)$, and so the inequalities \eqref{eq:genpertlemma-function-I} and \eqref{eq:genpertlemma-function-J} follow from
\begin{align}
	\label{eq:genpertlemma-fI}
	&f(\ww',t)\le f(\ww,t) \ (\forall t \in \III_i(\ww')) \quad \text{and} \quad \III_i(\ww')\subseteq \III_i(\ww) \quad \text{for all}\  i\in\cI;\\
	\label{eq:genpertlemma-fJ}
& f(\ww',t)\ge f(\ww,t) \ (\forall t \in \III_j(\ww)) \quad \text{and} \quad \III_j(\ww')\supseteq \III_j(\ww) \quad \text{for all}\  j\in\cJ.
\end{align}
Moreover, strict inequalities for all points $t$ with $J(t)\ne-\infty$ will follow in \eqref{eq:genpertlemma-function-I} and \eqref{eq:genpertlemma-function-J} if we can prove strict inequalities in
\eqref{eq:genpertlemma-fI} and \eqref{eq:genpertlemma-fJ}
\emph{for all values of $t$} in the said intervals.

Furthermore, in case we have strict inequalities in \eqref{eq:genpertlemma-fI} and \eqref{eq:genpertlemma-fJ} for all points $t$, then for non-singular $\III_k(\ww)$ this entails strict inequalities also in \eqref{eq:genpertlemma-max} (for the corresponding $k$; and for all $k$ if $\ww\in Y$). To see this, one may refer back to the Trivial Lemma \ref{lem:trivi} with $\{f,g\}=\{f(\ww,\cdot),f(\ww',\cdot)\}$, $h=J$, $\{A,B\}=\{\III_k(\ww),\III_k(\ww')\}$.

In the next, main part of the argument we prove \eqref{eq:genpertlemma-function-I}, \eqref{eq:genpertlemma-function-J},  \eqref{eq:genpertlemma-fI}, and \eqref{eq:genpertlemma-fJ} by induction on $n$ for any $n$-field function and any kernel function.

If $n=1$ and $\cI=\{0\}$, $\cJ=\{1\}$, then $\ww'=(w_1')=(w_1+h)$ and if $\cJ=\{0\}$,  $\cI=\{1\}$, then $\ww'=(w_1')=(w_1-h)$ works with any $0<h<\min(w_1,1-w_1)$.  For this  only monotonicity resp.{} strict monotonicity of the kernel is needed, and hence \eqref{eq:genpertlemma-fI}, \eqref{eq:genpertlemma-fJ} follow readily, while \eqref{eq:genpertlemma-function-I}, \eqref{eq:genpertlemma-function-J} follow by the preliminary observations made above.

Let now $n>1$ and assume, as inductive hypothesis, the validity of the assertions 
for $\widetilde{n}:=n-1$ for any choice of kernel and $\widetilde{n}$-field functions.

\textbf{Case 1}. If some of the partition sets $\cI, \cJ$ contain neighboring indices $k, k+1$, then we consider the kernel function $\widetilde{K}:=K$, and the $\widetilde{n}$-field function  $\widetilde{J}:=K(\cdot-w_k)$ with now the sum of translates function $\widetilde F$ formed by using $\widetilde{n}=n-1$ translates with respect to the node system
\[
\widetilde{\ww}:=(w_1,w_2,\ldots,w_{k-1},w_{k+1},\ldots,w_n).
\]
Formally, the indices change: $\widetilde{w}_\ell=w_\ell$ for $\ell=1,\ldots,k-1$, but $\widetilde{w}_\ell=w_{\ell+1}$ for $\ell=k,\ldots,n$, the $k$th coordinate being left out.

We apply the same change of indices in the partition: $k$ is dropped out (but the corresponding index set $\cI$ or $\cJ$ will not become empty, for it contains $k+1$); and then shift indices one left for $\ell>k$: so
\begin{align*}
	\widetilde{\cI}&:=\{i \in \cI: i<k\} \cup \{i-1 \in \cI: i>k\}
\intertext{and}
\widetilde{\cJ}&:=\{j \in \cJ: j<k\} \cup \{j-1 \in \cJ: j>k\}.
\end{align*}
Observe that $\widetilde{F}(\widetilde{\ww},t)=f(\ww,t)$  for all $t\in [0,1]$, while
\[
\III_\ell(\widetilde{\ww})=\begin{cases}
\III_\ell(\ww) \quad & \text{if} \quad \ell <k, \\
\III_k(\ww)\cup \III_{k+1}(\ww) \quad & \text{if} \quad \ell =k, \\
\III_{\ell+1}(\ww) \quad & \text{if} \quad \ell >k.
\end{cases}
\]
If $\ww'$ is close enough to $\ww$, then a similar correspondence holds for  $\ww' \in S^{(n)}$ and $\widetilde{\ww}'\in S^{(\widetilde{n})}$  (where $S^{(n)}$ and $S^{(\widetilde{n})}$ denote the simplices of the corresponding dimension). 
We will use this only with $w'_k=w_k$ remaining the same. 
In this case using that $k$ and $k+1$ belong to the same index set $\cI$ or $\cJ$, 
it is easy to check that $\III_i(\widetilde{\ww}') \subseteq \III_i(\widetilde{\ww})$  
for all $i\in \widetilde{\cI}$ is equivalent to $\III_i(\ww')\subseteq \III_i(\ww)$ for all $i\in {\cI}$,
and $\III_j(\widetilde{\ww}') \supseteq \III_j(\widetilde{\ww})$  
for all $j\in \widetilde{\cJ}$ is equivalent to $\III_j(\ww')\supseteq \III_j(\ww)$ for all $j\in {\cJ}$.
Therefore an application of the inductive hypothesis yields the assertions \eqref{eq:genpertlemma-fI}, \eqref{eq:genpertlemma-fJ} in this case.
Whence, by the preliminary observations also \eqref{eq:genpertlemma-function-I}, \eqref{eq:genpertlemma-function-J} follow.

\textbf{Case 2}. It remains to prove the assertion when $\cI, \cJ$ contain no neighboring indices: 
so  $\cI$ and $\cJ$ partition 
$\{0,1,\ldots,n\}$ into the subsets of odd and even natural numbers up to $n$.
We can assume that $\cI=(2\NN_0+1) \cap \{0,1,\ldots,n\}$ and $\cJ=2\NN_0 \cap \{0,1,\ldots,n\}$ 
(the other case can be handled analogously).

We emphasize here that it is important that $\ww \in S$ is non-degenerate.
This allows, for sufficiently small $\delta>0$, to move any $w_\ell$ within a
distance $\delta>0$ still keeping that the perturbed node system $\ww'$ belongs to $S$.
We fix such a $\delta>0$ at the outset and consider perturbations $\ww'$ of $\ww$ only within distance $\delta$ from now on.
Our new perturbed node system $\ww'$ will be, with an
arbitrary $0<h<\delta/\max\{r_1,\dots, r_n\}$, the system
\begin{equation}
\label{eq:wprimeoddeven}
\ww':=(w_1',\ldots,w_n') \quad \text{with} \quad w'_\ell:=w_\ell-(-1)^\ell \frac{1}{r_\ell} h, \quad \ell=1,2,\ldots,n.
\end{equation}
Obviously, $\III_j(\ww') \supseteq \III_j(\ww)$ holds for all $j\in \cJ$, and $\III_i(\ww') \subseteq \III_i(\ww)$ for all $i\in \cI$.

Take now an even indexed interval $\III_{2k}(\ww)=[w_{2k},w_{2k+1}]$, so $2k \in \cJ$.
Our change of nodes can now be grouped as \emph{pairs of changing nodes} $w_{2\ell-1},w_{2\ell}$
among $w_1,\dots w_{2k}$, and then again among $w_{2k+1},\dots, w_{2\lfloor n/2\rfloor}$,
plus a left-over change of $w_n$ in case $n$ is odd.
Now, \emph{the pairs} are always changed so that the intervals in between shrink, and shrink exactly as is described in Lemma \ref{lem:widening}.
We apply this lemma for each pair of such nodes with the choices $a=w'_{2\ell-1}$, $b=w'_{2\ell}$, $\alpha=w_{2\ell-1}$, $\beta=w_{2\ell}$, $p=r_{2\ell-1}$, $q=r_{2\ell}$.
This gives that for each such pair of changes, for $t$ \emph{outside of the enclosed interval} $(w_{2\ell-1},w_{2\ell})$ we have
\begin{equation}\label{eq:genpert1}
	r_{2\ell-1}K(t-w'_{2\ell-1})+r_{2\ell}K(t-w'_{2\ell})\geq r_{2\ell-1}K(t-w_{2\ell-1})+r_{2\ell}K(t-w_{2\ell}).
\end{equation}
Note that $\III_{2k}(\ww)$, hence any $t \in \III_{2k}(\ww)$, is \emph{always outside of the intervals}, therefore \eqref{eq:genpert1} holds.
If there is a left-over, unpaired change, then $n$ is odd, the respective node $w_n$ is increased, and \emph{now by monotonicity} we conclude for $t\in \III_{2k}(\ww)$  that $K(t-w_n')=K(t-w_n-h/r_n)\geq K(t-w_n)$.
Altogether, we find with $\eta:=1$ for $n$ odd and $\eta:=0$ for $n$ even that
\begin{align}\label{eq:groupedchange} \notag
f(\ww,t)& = \sum_{\ell=1}^{\lfloor n/2\rfloor } \left(r_{2\ell-1}K(t-w_{2\ell-1})+r_{2\ell}K(t-w_{2\ell})\right) + \eta K(t-w_n)
\\& \le \sum_{\ell=1}^{\lfloor n/2\rfloor } \left(r_{2\ell-1}K(t-w_{2\ell-1}')+r_{2\ell}K(t-w_{2\ell}')\right) + \eta K(t-w_n')
 =f(\ww',t).
\end{align}
Furthermore, all the appearing inequalities are strict in case $K$ is strictly monotone (and hence is strictly concave).
We have proved \eqref{eq:genpertlemma-fJ}, even with strict inequality under appropriate assumptions.

The proof of \eqref{eq:genpertlemma-fI}  runs analogously
by grouping the change of nodes as a change of a singleton $w_1$,
and  then  of pairs $w_{2\ell}, w_{2\ell+1}$ for $\ell=1,\dots,\lfloor (n-1)/2\rfloor$,
and of another singleton $w_n$ if $n$ is even.
\end{proof}

\begin{theorem}[\textbf{Intertwining theorem}]\label{thm:Intertwining} 
Let $n\in\NN$, let $r_1,\dots, r_n>0$, let $K$ be a singular \eqref{cond:infty}, 
strictly concave and (strictly) monotone \eqref{cond:smonotone} kernel function 
and let $J:[0,1]\to \uR$ be an upper semicontinuous $n$-field function.

Then for nodes $\xx,\yy\in Y$ majorization cannot hold, i.e., the coordinatewise inequality $\mv(\xx)\le \mv(\yy)$ can only hold if $\xx=\yy$.
\end{theorem}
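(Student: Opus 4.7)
The plan is to argue by contradiction, using a ``shifted minimax'' trick that recycles the equioscillation machinery of Theorem \ref{thm:EquiThm} together with the homeomorphism Theorem \ref{thm:homeo3}. Suppose $\xx,\yy\in Y$ with $\xx\ne\yy$ and $\mv(\xx)\le\mv(\yy)$ coordinatewise. Set $c_k:=m_k(\yy)$ for $k=0,1,\dots,n$ and consider the auxiliary functional
\[
\widetilde{m}(\ww):=\max_{k=0,1,\dots,n}\bigl(m_k(\ww)-c_k\bigr),\qquad \ww\in\oS.
\]
By Proposition \ref{prop:cont}, $\widetilde{m}$ is continuous on the compact set $\oS$ and real-valued (the $n$-field property of $J$ ensures some $m_k$ is finite at every $\ww$), so $\widetilde{m}$ attains a minimum on $\oS$.

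The core technical step is a shifted analogue of Theorem \ref{thm:EquiThm}: every minimizer $\ww^*\in\oS$ of $\widetilde{m}$ lies in $Y$ and satisfies the shifted equioscillation property $m_0(\ww^*)-c_0=\cdots=m_n(\ww^*)-c_n$. This is proved by transcribing verbatim the three-case perturbation argument of Theorem \ref{thm:EquiThm}. Namely, if some index $j$ had $m_j(\ww^*)-c_j<\widetilde{m}(\ww^*)$, then the corresponding widening (or boundary-shifting) perturbation produces a $\ww'$ arbitrarily close to $\ww^*$ with $F(\ww',\cdot)<F(\ww^*,\cdot)$ strictly on the complement of the enlarged interval replacing $\III_j(\ww^*)$; by the Trivial Lemma \ref{lem:trivi} and strict concavity of $K$ this yields $m_k(\ww')<m_k(\ww^*)$ strictly for all $k\ne j$, while $m_j(\ww')$ stays close to $m_j(\ww^*)$ by continuity. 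Subtracting the constants $c_k$ immediately gives $\widetilde{m}(\ww')<\widetilde{m}(\ww^*)$, contradicting minimality. Non-singularity of $\ww^*$ follows because otherwise some $m_k(\ww^*)=-\infty$, which is incompatible with shifted equioscillation as $\widetilde{m}(\ww^*)>-\infty$.

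Shifted equioscillation at $\ww^*$ is equivalent to $\Phi(\ww^*)=(c_1-c_0,\dots,c_n-c_{n-1})$; by the homeomorphism Theorem \ref{thm:homeo3} such a $\ww^*\in Y$ is \emph{unique}. Now $\yy$ trivially satisfies shifted equioscillation (all differences $m_k(\yy)-c_k$ equal zero), hence $\yy$ is the unique minimizer of $\widetilde{m}$; equivalently, $\widetilde{m}(\ww)>0$ for every $\ww\ne\yy$. But the standing hypothesis gives $\widetilde{m}(\xx)=\max_k(m_k(\xx)-m_k(\yy))\le 0$, forcing $\xx=\yy$, the desired contradiction. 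The principal obstacle is verifying that the perturbation argument of Theorem \ref{thm:EquiThm} transfers cleanly to the shifted functional; this works because the perturbations act on $F$ pointwise in a manner independent of the shifts $c_k$, so the strict decrease of the $m_k$ for $k\ne j$ (combined with continuity at $k=j$) delivers the strict decrease of $\widetilde{m}$.
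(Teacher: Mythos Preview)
Your approach is correct and genuinely different from the paper's. The paper works \emph{within} $Y$: it defines the compact set $Z=\{\zz\in\oS:\mv(\zz)\le\mv(\yy),\ d(\zz,\yy)\le d_0\}\subseteq Y$, maximizes the ``minimal gap'' $\rho(\zz,\yy)=\min_k(m_k(\yy)-m_k(\zz))$ over $Z$, and at the maximizer applies the General Maximum Perturbation Lemma~\ref{lem:gen-max-perturbation} (with a bipartition $\cI\cup\cJ$ of the full index set) to force the gap vector to be constant, whence the Homeomorphism Theorem gives $\zz_0=\yy$. Your route instead minimizes the shifted functional $\widetilde m$ over \emph{all} of $\oS$ and transplants the single-interval widening argument of Theorem~\ref{thm:EquiThm}; this is conceptually appealing because it exhibits intertwining as a ``shifted equioscillation'' phenomenon and bypasses Lemma~\ref{lem:gen-max-perturbation} altogether. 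The price is that you must handle potentially degenerate or singular minimizers $\ww^*$, whereas the paper's argument stays in $Y$ throughout and can invoke the ready-made Lemma~\ref{lem:gen-max-perturbation}.

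One point needs more care than ``verbatim'' suggests. Theorem~\ref{thm:EquiThm} only proves $\max_{k\ne j}m_k(\ww')<\mol(\ww^*)$ (a single bound against the global maximum), which is not what you need: you need the per-interval strict decrease $m_k(\ww')<m_k(\ww^*)$ for each $k\ne j$ with $m_k(\ww^*)>-\infty$. This \emph{is} true, but you must apply the Trivial Lemma interval by interval with $A=I_k(\ww')\subseteq B=I_k(\ww^*)$ (noting that the pure-sum strict inequality from Lemma~\ref{lem:widening} holds on each such $A$, and that a maximizer $t^*\in I_k(\ww^*)$ of $F(\ww^*,\cdot)$ lies strictly between nodes by singularity, hence stays in $I_k(\ww')$ for small $h$ and witnesses $h(t^*)>-\infty$). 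When $m_k(\ww^*)=-\infty$ the strict inequality may fail (both sides $-\infty$), but then $m_k(\ww')-c_k=-\infty<\widetilde m(\ww^*)$ anyway. Also, in Cases~2 and~3 the widened interval is $I_0$ (resp.\ $I_n$), not $I_j$; this is harmless because $w_j^*=0$ with $j\ge 1$ forces $I_0,\dots,I_{j-1}$ degenerate, so index $0$ is itself deficient and can serve as the widened index. With these clarifications your argument goes through.
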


\begin{proof}
Take two node systems $\xx, \yy \in Y$ and assume that majorization holds between them: say $\mv(\xx)\le \mv(\yy)$ in the sense that $m_i(\xx)\le m_i(\yy)$ for $i=0,1,\ldots,n$. We need to show that in fact $\xx=\yy$.

First, if $\mv(\xx)=\mv(\yy)$, then of course $\Phi(\xx)=\Phi(\yy)$, hence in view of the Homeomorphism Theorem \ref{thm:homeo3} (which requires condition \eqref{cond:infty})
 only $\xx=\yy$ is possible.

So assume that $\mv(\xx)\ne\mv(\yy)$, and so there exists $i$ with $m_i(\xx)<m_i(\yy)$.
Let us introduce the following two (``maximal'' and ``minimal'') distance functions
\begin{align*}
d(\zz,\yy)&:=\max_{i=0,1,\ldots,n} (m_i(\yy)-m_i(\zz)),
\\ \rho(\zz,\yy)&:=\min_{i=0,1,\ldots,n} (m_i(\yy)-m_i(\zz)).
\end{align*}
Then $\rho(\zz,\yy)\leq d(\zz,\yy)$. Moreover, $0\leq \rho(\zz,\yy)$ if and only if $\mv(\zz)\leq \mv(\yy)$, and for $d_0:=d(\xx,\yy)$ we have $d_0>0$.
Consider the set
\[
  Z:=\bigl\{\zz\in \oS\ :\  \mv(\zz)\le \mv(\yy), d(\zz,\yy)\le d_0\bigr\}.
\]
Obviously, $\xx \in Z$, hence $Z \ne \emptyset$. By Proposition \ref{prop:cont} the distance functions $d(\cdot,\yy), \rho(\cdot,\yy)$ are (extended) continuous on $\oS$ and continuous on $Y$. In fact,  $\zz\in Z$ cannot be singular, thus $Z\subseteq Y$, where the distance functions are  continuous. Further, as the intersection of $\le$ level sets of continuous functions (see Proposition \ref{prop:cont}) $Z$ is closed, and therefore compact.

Here we arrive at the key of our argument: We now maximize $\rho(\cdot,\yy)$ on the compact set $Z$. Surely, $\rho(\cdot,\yy)$ can be at most $d_0$ on $Z$.
Let $\zz_0\in Z$ 
be a maximum point of $\rho(\cdot,\yy)$, and set  $\rho_0:=\rho(\zz_0,\yy)\le d_0$. 
We  claim that the difference $m_i(\yy)-m_i(\zz_0)$ is constant $\rho_0$ for all $i=0,1,\ldots,n$.

Indeed, if this is not the case, then by means of Lemma \ref{lem:gen-max-perturbation}, we can perturb $\zz_0$ to another node system $\ww$ with a larger $\rho$ value. In detail: assume for a contradiction that $\mv(\yy)-\mv(\zz_0)\ne \rho_0 \textbf{1}$. Note that then we also have $\rho_0<d_0$, for in case $\rho_0=d_0$ we must have $\mv(\zz_0)=\mv(\yy)-d_0\textbf{1}=\mv(\yy)-\rho_0\textbf{1}$, contradicting the assumption.

Now let us define the index sets
\begin{align*}
&\cI:=\{i \in \{0,1,\dots,n\} \ :\   m_i(\zz_0)=m_i(\yy)-\rho_0\},
\\ &\cJ:=\{j \in \{0,1,\dots,n\} \ :\  m_j(\yy)-d_0 \le m_j(\zz_0)< m_j(\yy)-\rho_0 \}.
\end{align*}
As $m_i(\yy)-m_i(\zz_0)$ is not constant in $i$, we certainly have indices in both index sets $\cI, \cJ$. Moreover, in view of $\zz_0 \in Z$ we have $m_k(\zz_0)\in [m_k(\yy)-d_0,m_k(\yy)-\rho_0]$ for all $k=0,1,\ldots,n$. Therefore, $\cI\cup\cJ$ is in fact a non-trivial partition of $\{0,1,\dots,n\}$. Thus,  Lemma \ref{lem:gen-max-perturbation} applies for these indices and to the non-singular, non-degenerate point $\zz_0$, resulting in another node system $\ww\in Y\setminus \{\zz_0\}$ arbitrarily close to $\zz_0$ with $m_i(\ww)<m_i(\zz_0)$ for $i\in \cI$ and $m_j(\ww)>m_j(\zz_0)$ for $j\in\cJ$. Since for $j\in \cJ$ we have $m_j(\zz_0)<m_j(\yy)-\rho_0$, by the continuity of the functions $m_i$ (Proposition \ref{prop:cont}) if $\ww$ is sufficiently near to $\zz_0$ we have
that $m_j(\ww)<m_j(\yy)-\rho_0 $ for all $j\in\cJ$. Of course, for these indices $j\in \cJ$ also the inequality $m_j(\ww)\ge m_j(\yy)-d_0$ remains valid, since $m_j(\ww)> m_j(\zz_0) \ge m_j(\yy)-d_0$ for $j\in\cJ$.

Similarly, after perturbation we find $m_i(\ww)<m_i(\zz_0)=m_i(\yy)-\rho_0$ for all $i\in\cI$, and, by continuity, in a sufficiently small neighborhood of $\zz_0$ also the inequality $m_i(\ww)\ge m_i(\yy)-d_0$  holds. (Here of course we need that $0<d_0-\rho_0$, and use continuity.)

Altogether we find $\ww \in Z$, but $m_k(\ww)<m_k(\yy)-\rho_0$ for all $k=0,1,\ldots,n$, whence $\rho(\ww,\yy)>\rho_0$ follows, a contradiction with the choice of $\zz_0$ as maximizing $\rho(\cdot,\yy)$ on $Z$. This proves that $\zz_0 \in Z$ can only be a point with coordinates of $\mv(\zz_0)$ having constant distance $\rho_0$ from the respective coordinates of $\mv(\yy)$: $m_k(\zz_0)=m_k(\yy)-\rho_0$, for $k=0,1,\ldots,n$.

It follows that $\yy,
\zz_0$ are two points
of $Y$ with equal
difference vectors:
$\Phi(\yy)=\Phi(\zz_0)$.
By Theorem \ref{thm:homeo3} $\Phi$ is, in particular, injective, hence $\zz_0=\yy$.
It follows that $\rho_0=0$, and the maximum of $\rho$-distances
between points of $Z$ to $\yy$---and
therefore $\rho$-distances of \emph{any node system} $\zz \in Z$
from $\yy$---can only be zero (since $\rho(\cdot,\yy)\geq 0 $ on $Z$).
That is, all $\zz\in Z$ are maximum points for the $\rho$-distance: $\rho(\zz,\yy)=\rho_0=0$ for all $\zz\in Z$. It follows that for any $\zz \in Z$ the same applies as for the selected $\zz_0$ and we conclude that $Z=\{\yy\}$. Since $\xx\in Z$, it follows that $\xx=\yy$, and that was to be proved.
\end{proof}

\begin{remark}
Similar non-majorization results are rare, but we may
compare to, e.g., Theorem 1, on p.~17 of \cite{Parthasarathy}.
If the kernel function $K$ and also the external field function $J$ are smooth,
then we may consider the Jacobi matrix
of $\Phi$ (the interval maxima difference function)
and the proof of the Homeomorphism Theorem \ref{thm:homeo3}---i.e., the proof of Theorem 2.1 in \cite{Homeo}---where it is shown
that this Jacobi matrix is diagonally dominant.
It is known that diagonally dominant matrices are so-called ``P-matrices''
(for more, we refer to pp.~134-137 of \cite{BermanPlemmons}), hence the condition of the cited Theorem 1 of \cite{Parthasarathy} is satisfied.
However, the conclusion of that result is far weaker than ours:
it excludes majorization only in case the nodes $\xx, \yy$ are ordered similarly coordinatewise: $x_i\le y_i \ (i=1,\ldots,n)$.
The generality that we get non-majorization for \emph{all node systems} $\xx, \yy \in Y$ can be attributed to the special setup,
where $\Phi$ is formed from differences of interval maxima of sum of translates functions satisfying our assumptions.
\end{remark}

\begin{corollary}
\label{cor:Chebyshev}
Consider the (almost) two centuries
old classical Chebyshev problem, where in
our terminology $K(t):=\log|t|$, $J(t)\equiv 0$,
and so we have strict concavity and monotonicity.
Then for any two node systems
$\xx, \yy \in S$
we necessarily have some indices
$0\le i\ne j \le n$ such that
\begin{align*}\label{eq:ChebyshevIntertwining}
& \max_{t\in \III_i(\xx)} \Bigl|\prod_{k=1}^n (t-x_k)\Bigr|
<
\max_{t\in \III_i(\yy)}
\Bigl|\prod_{k=1}^n (t-
y_k)
\Bigr|,
\\& \max_{t\in \III_j(\xx)}\Bigl |\prod_{k=1}^n (t-x_k)\Bigr|
>
\max_{t\in \III_j(\yy)}\Bigl |\prod_{k=1}^n
(t- y_k)
\Bigr|.
\end{align*}
\end{corollary}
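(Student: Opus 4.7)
The plan is to recognize this corollary as a direct specialization of the Intertwining Theorem \ref{thm:Intertwining} to the classical Chebyshev setting, so my task splits into (i) verifying the hypotheses, (ii) translating the interval maxima back from additive to multiplicative language, and (iii) reading off the conclusion. First I would check that the kernel and field fit the framework: $K(t):=\log|t|$ is singular (since $K(0)=-\infty$), strictly concave on each of $(-1,0)$ and $(0,1)$, and strictly monotone in the sense of \eqref{cond:smonotone}; the trivial field $J\equiv 0$ is upper semicontinuous and finite everywhere, hence an $n$-field function for every $n$; the multipliers $r_k=1$ are positive. Thus all the assumptions of Theorem \ref{thm:Intertwining} are satisfied.

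Next, since the singularity set is $X=\emptyset$, formula \eqref{eq:Ydef-sing} yields $Y=S$, so every node system in the open simplex lies automatically in the regularity set. For these nodes the sum of translates function reduces to
\[
F(\yy,t)=\sum_{k=1}^n \log|t-y_k|=\log\Bigl|\prod_{k=1}^n(t-y_k)\Bigr|,
\]
so the interval maxima admit the explicit description
\[
m_i(\yy)=\log\max_{t\in \III_i(\yy)}\Bigl|\prod_{k=1}^n(t-y_k)\Bigr|\qquad (i=0,1,\ldots,n).
\]

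With these identifications in place, and assuming $\xx\ne\yy$ (otherwise the claim about two distinct indices is vacuous), I would apply Theorem \ref{thm:Intertwining} to the pair $(\xx,\yy)\in Y\times Y$: since $\mv(\xx)\le \mv(\yy)$ would force $\xx=\yy$, some index $j\in\{0,\dots,n\}$ must satisfy $m_j(\xx)>m_j(\yy)$. Swapping the roles of $\xx$ and $\yy$ and invoking the theorem once more produces an index $i$ with $m_i(\xx)<m_i(\yy)$. The two inequalities force $i\ne j$. Exponentiating each inequality, using the explicit formula above, yields exactly the two asserted strict inequalities between the interval maxima of $|\prod_k(t-x_k)|$ and $|\prod_k(t-y_k)|$.

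There is no real obstacle to this plan: the corollary is a near-tautological transcription of the general intertwining theorem into the multiplicative language of the classical Chebyshev problem. The only point worth flagging in the write-up is the implicit non-triviality hypothesis $\xx\ne\yy$, since the coincidence of the two node systems clearly rules out any strict inequality.
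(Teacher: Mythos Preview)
Your proposal is correct and follows exactly the intended route: the paper states this corollary without proof immediately after Theorem~\ref{thm:Intertwining}, since it is precisely the specialization you describe (verify the kernel and field hypotheses, note $Y=S$ because $X=\emptyset$, and exponentiate the conclusion). Your remark that the statement implicitly requires $\xx\ne\yy$ is well taken and worth keeping in the write-up.
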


\begin{figure}[h]
\begin{center}
\includegraphics[keepaspectratio,width=0.6\textwidth]{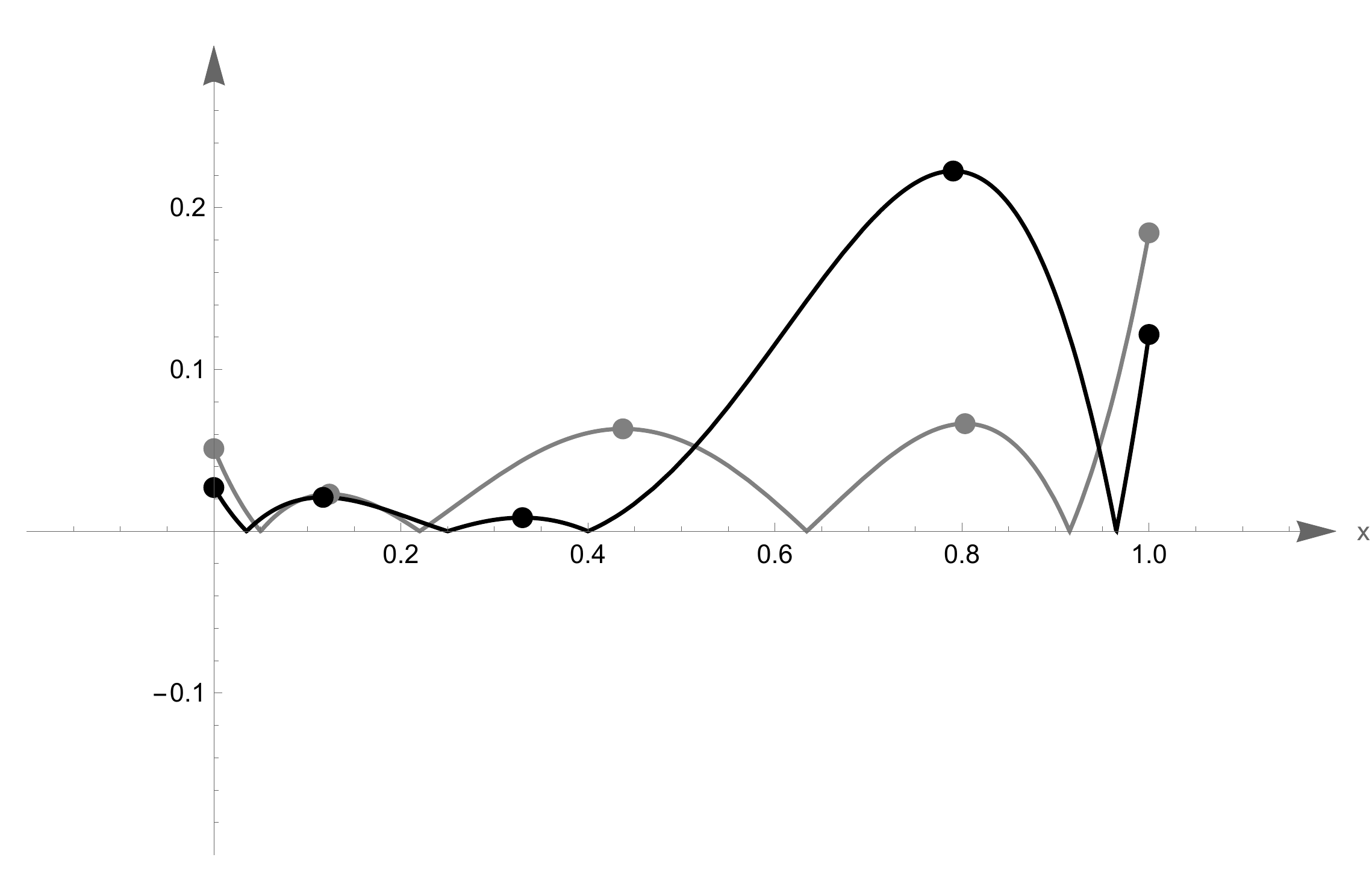}
\caption{Graphs of
$|(x-0.915)(x-0.634)(x-0.22)(x-0.05)|$ and
$|(x-0.965)(x-0.4)(x-0.25)(x-0.035)|$
 with dots at local maxima over $[0,1]$
in grey and black, respectively, 
cf. Corollary \ref{cor:Chebyshev}.}
\end{center}
\end{figure}

\begin{remark}
It seems that even in this classical situation the above general statement has not been observed thus far.
The special case when one of the node systems say $\xx$ is the extremal (equioscillating) node system $\ww$,
is well known and seems to be folklore.
However, comparison of arbitrary two node systems looks more complicated
and nothing was written about it in the literature what we could page through.
\end{remark}

\begin{corollary}[\textbf{Non-majorization theorem}]\label{cor:non-majorization0} 
Let $K$ be a singular \eqref{cond:infty} and  monotone \eqref{cond:monotone} kernel function, 
and let $J$ be an upper semicontinuous field function.
Then \emph{strict majorization} $m_i(\xx)>m_i(\yy)$ for every $i=0,1,\ldots,n$ cannot hold between any two node systems $\xx,\yy \in Y$.
\end{corollary}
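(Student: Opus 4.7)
The plan is to reduce to the strictly concave, strictly monotone setting of Theorem \ref{thm:Intertwining} by means of the same kernel perturbation $K^{(\eta)}(t):=K(t)+\eta\sqrt{|t|}$ that was used in the proof of Corollary \ref{cor:minimaxmaximin-infty}. Assume for contradiction that $m_i(\xx)>m_i(\yy)$ for all $i=0,1,\ldots,n$ for some $\xx,\yy\in Y$, and set
\[
\delta:=\min_{0\le i\le n}\bigl(m_i(\xx)-m_i(\yy)\bigr)>0.
\]

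For $\eta>0$ the function $K^{(\eta)}$ is again a singular kernel (since $\sqrt{|t|}\to 0$ as $t\to 0$) and, because $\sqrt{|t|}$ is strictly concave and strictly monotone on each side of $0$, so is $K^{(\eta)}$ on each of $[-1,0)$ and $(0,1]$. Write $F^{(\eta)}$, $m_i^{(\eta)}$, $\mv^{(\eta)}$, $Y^{(\eta)}$ for the corresponding objects attached to the perturbed kernel. Crucially, the singularity set $X$ of $J$ is unchanged, and for a singular kernel the condition $m_j^{(\eta)}(\zz)=-\infty$ is equivalent to $\rint\III_j(\zz)\subseteq X$ by \eqref{eq:Ydef-sing}; hence $Y^{(\eta)}=Y$, and in particular $\xx,\yy\in Y^{(\eta)}$. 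Moreover, setting $R:=\sum_{j=1}^n r_j$ and using $0\le \sqrt{|s|}\le 1$ for $s\in[-1,1]$, we obtain
\[
F(\zz,t)\le F^{(\eta)}(\zz,t)\le F(\zz,t)+\eta R \qquad (\zz\in\oS,\ t\in[0,1]),
\]
and consequently $m_i(\zz)\le m_i^{(\eta)}(\zz)\le m_i(\zz)+\eta R$ for every $i$ and every $\zz\in\oS$.

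Now choose $\eta>0$ so small that $\eta R<\delta$. Then for every $i=0,1,\ldots,n$,
\[
m_i^{(\eta)}(\xx)-m_i^{(\eta)}(\yy)\ge m_i(\xx)-\bigl(m_i(\yy)+\eta R\bigr)\ge \delta-\eta R>0,
\]
so $\mv^{(\eta)}(\yy)\le \mv^{(\eta)}(\xx)$ coordinatewise. Theorem \ref{thm:Intertwining}, applied to the singular, strictly concave, strictly monotone kernel $K^{(\eta)}$ and to the node systems $\yy,\xx\in Y^{(\eta)}$, then forces $\xx=\yy$, contradicting the strict inequality $m_0(\xx)>m_0(\yy)$. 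The only delicate point in this argument is verifying that the regularity set is preserved under the perturbation, so that Theorem \ref{thm:Intertwining} can legitimately be invoked for the given $\xx,\yy$; but since singularity of an interval is controlled by $J$ alone once $K$ is singular, this preservation is automatic.
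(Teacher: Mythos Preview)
Your proof is correct and follows essentially the same approach as the paper: perturb the kernel to $K^{(\eta)}(t)=K(t)+\eta\sqrt{|t|}$, use the uniform estimate $m_i\le m_i^{(\eta)}\le m_i+\eta R$ to transfer strict majorization to the perturbed setting, and invoke Theorem~\ref{thm:Intertwining}. Your explicit verification that $Y^{(\eta)}=Y$ (so that $\xx,\yy$ remain in the regularity set for the perturbed kernel) is a detail the paper's proof leaves implicit but which is indeed needed for the application of Theorem~\ref{thm:Intertwining}.
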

\begin{proof} As in the proof of Corollary \ref{cor:minimaxmaximin-infty}, consider the modified kernel functions $K^{(\eta)}(t):=K(t)+\eta \sqrt{|t|}$, which are strictly concave and strictly monotone kernel functions. Let $\xx,\yy\in \oS$.
Then, as in the mentioned proof, for $\eta\downarrow 0$ we have  $\mv^{(\eta)}(\xx)\to \mv(\xx)$ and  $\mv^{(\eta)}(\yy)\to \mv(\yy)$. This implies that once $\mv(\xx) > \mv(\yy)$, we must have $\mv^{(\eta)}(\xx) > \mv^{(\eta)}(\yy)$ for every sufficiently small $\eta>0$, which is impossible by Theorem \ref{thm:Intertwining}, given that by condition $\xx \ne \yy$. Whence we conclude that $\mv(\xx) > \mv(\yy)$ is for no pair 
$\xx,\yy\in Y$
 possible.
\end{proof}

\begin{corollary}
Let $K$ be a singular \eqref{cond:infty} and
monotone \eqref{cond:monotone} kernel function and
let $J$ be an upper semicontinuous field function.
Let $\xx,\yy\in Y$ with
$\mul(\xx)=m(S)=M(S)= \mol(\yy)$.

Then there is a $j\in \{0,1,\dots,n\}$ such that
 \[
 m_j(\xx)=m(S)=M(S)=m_j(\yy).
 \]
\end{corollary}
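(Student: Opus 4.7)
My plan is to reduce the statement to the Non-majorization theorem (Corollary \ref{cor:non-majorization0}) via a simple partition-of-indices argument.

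First, I would record the meaning of the hypotheses. Since $\mul(\xx)=\min_j m_j(\xx)=m(S)$, the node system $\xx$ is a maximin point, so $m_j(\xx)\ge m(S)$ for all $j$ and $m_j(\xx)=m(S)$ holds for at least one index $j$. Dually, $\mol(\yy)=\max_j m_j(\yy)=M(S)=m(S)$ says that $\yy$ is a minimax point, so $m_j(\yy)\le m(S)$ for all $j$ and $m_j(\yy)=m(S)$ holds for at least one $j$. The goal is to exhibit a single index $j$ for which both equalities hold simultaneously.

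Next, I would dispose of the trivial case $\xx=\yy$: then $\mul(\xx)=\mol(\xx)=m(S)$, so $\xx$ is an equioscillation point, and any index gives $m_j(\xx)=m_j(\yy)=m(S)$. So assume $\xx\ne\yy$, and suppose for contradiction that no common index with the required property exists.

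The main step is then to show that this assumption forces the strict coordinatewise inequality $m_j(\xx)>m_j(\yy)$ for every $j\in\{0,1,\dots,n\}$. Indeed, fix any $j$. Either $m_j(\xx)=m(S)$, in which case by assumption $m_j(\yy)\ne m(S)$, and since $m_j(\yy)\le m(S)$ we get $m_j(\yy)<m(S)=m_j(\xx)$; or else $m_j(\xx)>m(S)$, and then $m_j(\xx)>m(S)\ge m_j(\yy)$. In either case $m_j(\xx)>m_j(\yy)$. This, however, is exactly the strict majorization forbidden by Corollary \ref{cor:non-majorization0} (whose hypotheses apply verbatim since $K$ is singular and monotone, $J$ is upper semicontinuous, and $\xx,\yy\in Y$), a contradiction.

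I do not expect any serious obstacle; the argument is a one-line pigeonhole combined with the already established Non-majorization theorem. The only point worth double checking is that all inequalities used are genuinely between finite numbers, which is automatic since $\xx,\yy\in Y$ ensures $m_j(\xx),m_j(\yy)\in\RR$ for all $j$, and $m(S)=M(S)\in\RR$ because the equioscillation value is finite (Corollary \ref{cor:minimaxmaximin-infty}).
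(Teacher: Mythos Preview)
Your proof is correct and is exactly the intended argument: the paper states this corollary immediately after the Non-majorization theorem (Corollary \ref{cor:non-majorization0}) without a separate proof, and your reduction via the partition-of-indices / pigeonhole step is precisely how it follows.
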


\section{ A discussion of conditions}
\label{sec:necessity}
In this section we show by examples
that dropping conditions from our results
entail that the conclusions
may not hold true any more.
These justify using the given conditions,
even if at first glance assuming, e.g., monotonicity or strict concavity
may not seem to be natural
or necessary.
The examples here also highlight the generality of our statements,
where further, e.g.,
smoothness conditions were not supposed.
In our results only conditions which are shown here 
not to be simply dispensable, were assumed throughout. 
Below we talk about ``necessity'' of conditions in this, logically weaker sense of indispensability.

\smallskip

\begin{example}[\bf Necessity of singularity]

Let $n=2$, $J(t):=8 \sqrt{1-t} $ and
$K(t):=\sqrt{t+4}$ if $t\in [0,1]$
and $K(t):=K(-t)$ if $t\in[-1,0)$.
Then,  $J$ is a concave field function, $J\in C^\infty([0,1])$,
and
$K\in C^{\infty}([-1,1]\setminus\{0\})$ is a strictly concave kernel function,
further, $K$ is monotone as in \eqref{cond:monotone}
and $J$ and $K$
do not satisfy the condition \eqref{cond:infty}.

We have $M(S)=m(S)=-4$,
in $\overline{S}$ there are  unique equioscillation,  unique minimax
and unique maximin node systems and
all these are $(0,0)\in\partial S$.
In other words, almost all conclusions of the above theorems hold true,
except that this point of extrema and equioscillation is not in $S$,
but on the boundary $\partial S$.
\end{example}

The key observation is that
$\frac{d}{dt}F(\yy,t) <0$ for any  $\yy\in S$ at every $t\in [0,1]$.
Hence
\begin{align*}
m_0(\yy)=\ & \max\{F(\yy,t):\ 0\le t\le y_1\}
=F(\yy,0)=8+\sqrt{4+y_1}+
\sqrt{4+y_2},
\\
m_1(\yy)=\ & F(\yy,y_1)
=8\sqrt{1-y_1}+2+\sqrt{4+y_2-y_1},
\\
m_2(\yy)=\ & F(\yy,y_2)
=8\sqrt{1-y_2}+\sqrt{4+y_2-y_1}+2.
\end{align*}
By the observation,
$m_0(\yy)\ge m_1(\yy)$
with equality if and only if
$y_1=0$
and similarly,
$m_1(\yy)\ge m_2(\yy)$
with equality if and only if
$y_1=y_2$.
Also, $\mol(\yy)=m_0(\yy)$
and $\mul(\yy)=m_2(\yy)$.
Obviously, $m_0(\yy)$,
$y\in\overline{S}$ is minimal
if and only if $\yy=(0,0)$,
and $m_2(\yy)$ is maximal
if and only if $\yy=(0,0)$.
Therefore we obtain that $M(\overline{S})=m(\overline{S})=-4$
and these are attained at $\yy=(0,0)$ only
and there is a unique equioscillation configuration
in $\overline{S}$, namely $\yy=(0,0)$.

\medskip

For convenience, we introduce the kernel function $L_a(t):= \min(0,\log|t/a|)$.

\begin{example}[{\bf Necessity of monotonicity}]
Let $n=1$, $J(t):=\sqrt{t}$
and $K(t):=L_{0.1}(t) + 1-2t^2$.
Note that $J$ is a strictly concave field function
and $K$ is a  strictly concave kernel function and $K$ is singular,
but it does not satisfy any of the monotonicity conditions \eqref{cond:monotone} and \eqref{cond:smonotone}.

Then the global minimum $M(S)$ of $\mol$ is attained only at $\yy=(y_1)=(0)$,
so $\III_0$ is degenerate and $\yy\in \partial S$. Also, $m_0(\yy)=-\infty$ and $m_1(\yy)=F(0,1/4)=11/8$. Obviously, $F(\yy,\cdot)$ does not equioscillate.
\end{example}

Indeed, if  $0<y_{1}\le 1/2$,
then $\mol(\yy)\ge F(y_{1},y_{1}+1/4)=\sqrt{y_{1}+ 1/4}+7/8>1/2+7/8=11/8$
and if $1/2<  y_{1}\le 1$ then $\mol(\yy)\ge F(y_{1},y_{1}-1/4)=\sqrt{y_{1}-1/4} + 7/8 >11/8$.
So
$\mol(\yy)=11/8$ is
attained at $\yy=(y_{1})=(0)$ only.

\begin{example}[\bf Necessity of strict monotonicity and concavity]\label{examp:2}
Let $0<a<\frac{e}{1+e}$ be arbitrary and  $K(t):=L_a(t)$.
Set $n=1$. Let $b\in(0,1)$ satisfy $(a<)1-a/e<b<1$ and let $J$ be the characteristic function of the interval $[b,1]$.
Then
\begin{enumerate}[label={(\alph*)}]
\item $\mv(\xx)=(0,0)$ if and only if
$\xx=(x)$,
$x=1-a/e$, and this is the unique equioscillation point of $F$, moreover, $m(S)=M(S)=0$;
\item however, $\mul(\xx)=0$ for all $a\le x \le 1-a/e$ and thus $\mul(\xx)=m(S)=0$ is attained not only for the equioscillating node, but for several other, non-equioscillating ones.
\item In particular, majorization occurs on $Y$.
\end{enumerate}
Note that $K$ is a concave and monotone kernel function,
but not strictly concave or strictly monotone.

\begin{figure}[h]
\begin{center}
\includegraphics[keepaspectratio,width=0.4\textwidth]{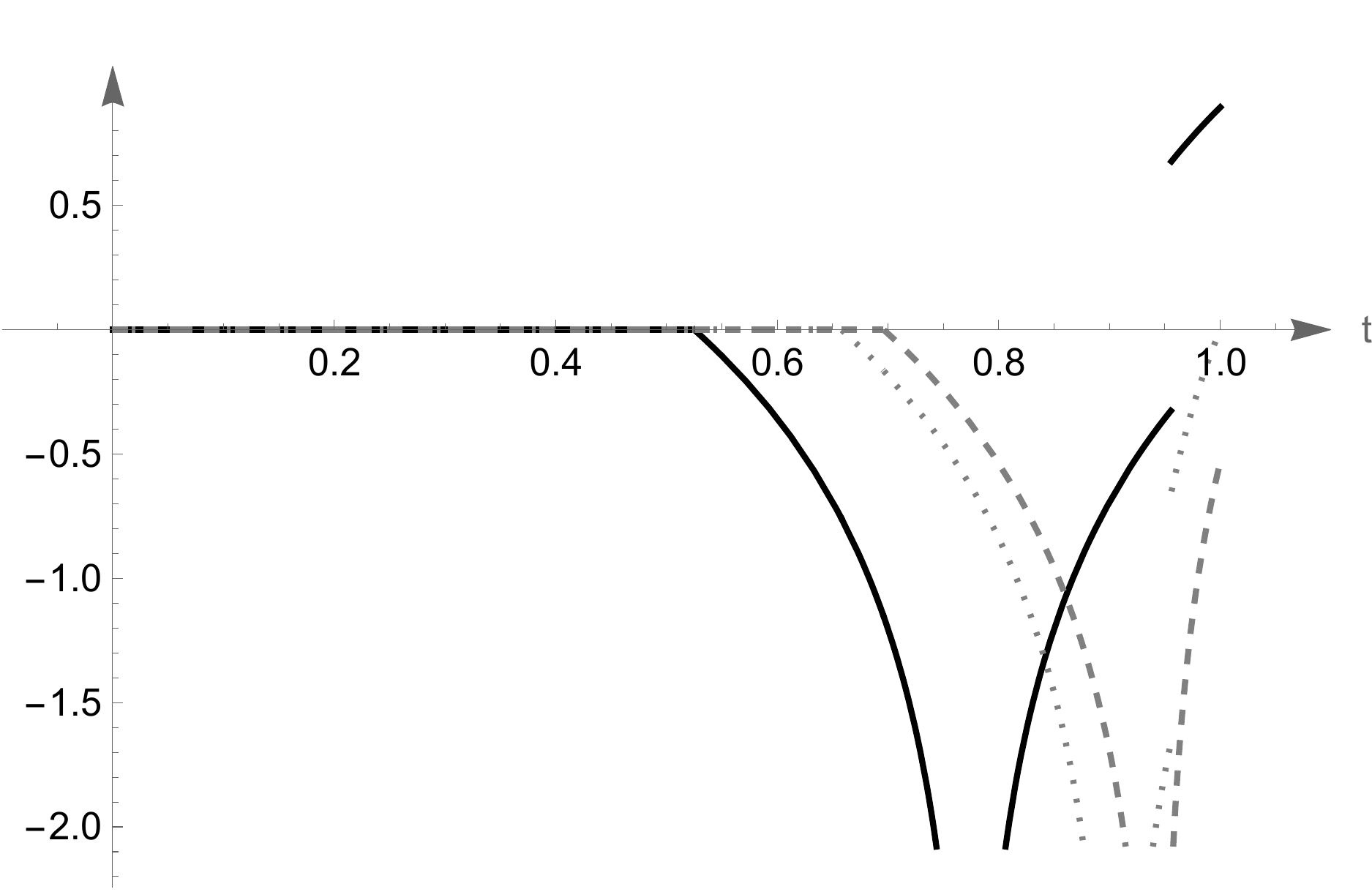}
\caption{The graphs of 
sum of translates functions $J(t)+K(t-x)$ when $a=1/4$, $b=0.955671$ and $x=0.775$ (black), $x=0.907$ (grey, dotted) and $x=0.946$ (grey, dashed), 
see Example \ref{examp:2}.
}
\
\includegraphics[keepaspectratio,width=0.4\textwidth]{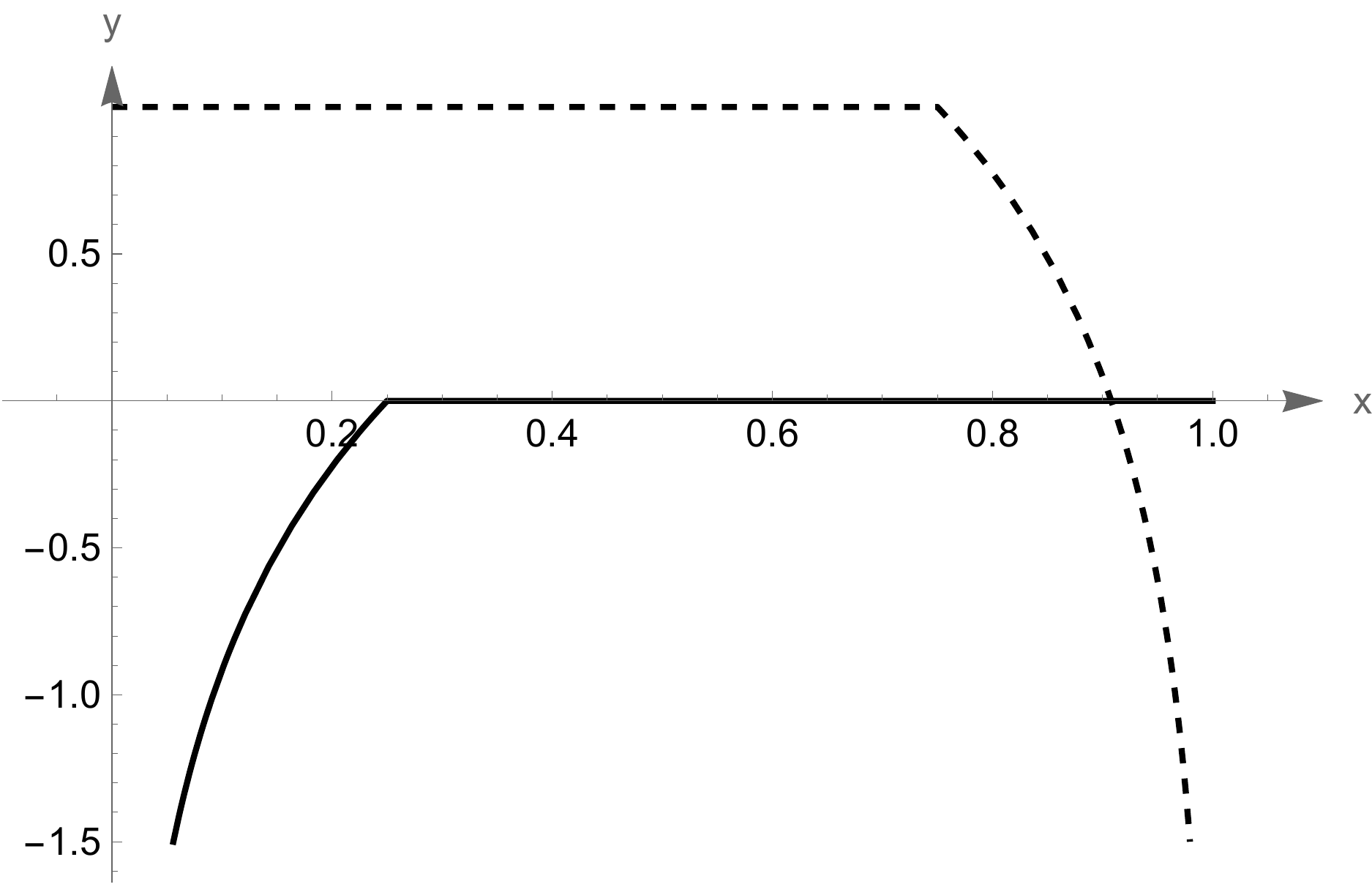}

\caption{The graphs of the interval maxima functions
 $m_0(x)$ (black) and $m_1(x)$ (black, dashed), 
see Example \ref{examp:2}.
}
\end{center}
\end{figure}
\end{example}

Indeed\footnote{For convenience, we write $F(x,t)$, $m_j(x)$, etc in place of
$F(\xx,t)$, $m_j(\xx)$, etc. respectively.},
$F(x,\cdot)$ cannot attain positive values on $[0,x]$.
If  $x\le b$, then $J(x)= 0$ and $L_a(x)\le 0$.
If $b\le x \le 1$, then by monotonicity of $L_a$,
$F(x,\cdot)$ is maximal on $[0,x]$ either at $b$ or at $0$.
The value at $b$ is $L_a(b-x)+1\le L_a(b-1)+1 <\log|(a/e)/a|+1=0$.
The value at $0$ is $F(x,0)\le 0$, moreover, taking into account that $x\ge a$
also holds, we have $L_a(0-x)=0$, so that $F(x,0)=0$.

It follows that $m_0(x)\le 0$.
Note that $m_0(0)=-\infty$ and $m_0$ is strictly increasing on $[0,a]$.
If $a\le x\le b$, then $F(x,t)$ is monotone decreasing on
$t\in[0,x]$, and $F(x,0)=0$ so  $m_0(x)= 0$.
If $b\le x \le 1$, then $F(x,t)$ is monotone decreasing on
$t\in[0,b]$ and is also monotone decreasing on $t\in[b,x]$.
Moreover $F(x,0)=0$ and $F(x,b)=1+L_a(b-x)\le 1+L_a(b-1)\le 1+\log|(1-b)/a|\le 0$,
so that $m_0(x)=F(x,0)=0$ in this case, too. In all, $m_0(x)\le 0$ for all $x$,  and $m_0(x)=0$ precisely for $a\le x\le 1$.

Regarding $m_1$, note that $J$ is monotone increasing and
$L_a(\cdot -x)$ is also monotone increasing on $[x,1]$,
so $m_1(x)=F(x,1)=1+L_a(1-x)$.
If $0\le x \le 1-a$, then $L_a(1-x)=0$,
so $m_1(x)=1$;
and if $1-a < x \le 1$, then $L_a(1-x)=\log((1-x)/a)$,
so $m_1(x)=\log((1-x)/a)+1 <1$.
In sum, $m_1(x)=1$ for $x\in [0,1-a]$ and
then it is strictly decreasing in $[1-a,1]$ from $1$ to $-\infty$, attaining $0$ exactly for $x=1-a/e$.

Comparing these cases for different ranges of $x$,
we see that $m_0(x)=m_1(x)$ holds if and only if
$m_0(x)=0$ and $m_1(x)=0$ which occurs precisely for $x=1-a/e$.

Therefore there is a unique equioscillation point.
Also, if $0\le x\le 1-a/e$, then $\underline{m}(x)=0$.

\begin{example}[{\bf Necessity of monotonicity}]\label{examp:mon}
Let
$$
K(t):=\min(\log|10t|,\log(\frac{10}{9}(1-|t|)))
$$
and $J(t):=0$.
Then $K$ is strictly concave, but not monotone.
Set $n=2$. Then
\begin{enumerate}[label={(\alph*)}]
\item
there are several equioscillating node systems, but their $\mol$ values are the same;
\item
there is a unique minimax node system;
\item
there are several maximin node systems; and
\item
strict majorization occurs.
\end{enumerate}
\end{example}

Observe first that $K(t)\le 0$ for $t\in[-1,1]$.
This immediately comes from that $\log|10 t|= \log(10(1-|t|)/9)$ holds if and only if $t=1/10$,
and $K(1/10)=0$ and $K$ is strictly monotone increasing on
$[0,1/10]$ and is strictly monotone decreasing on $[1/10,1]$.

Write the nodes as $\xx=(a-\delta,a+\delta)$, with $a:=\frac{x_1+x_2}{2}$ and $\delta:=\frac{x_2-x_1}{2}$.
By condition $0\le a-\delta\le a+\delta\le 1$,
in other words, $0\le a \le 1$ and $0\le \delta \le a,1-a$.
First, observe that
\begin{align*}
m_1(\xx)&=\sup\{F(\xx,t): a-\delta\le t \le a+\delta\}=F(\xx,a), \text{ so }
\\
m_1(\xx)&=\begin{cases}
2\log(10\delta)
& \text{ if } 0\le \delta\le 1/10,
\\
2\log\frac{10(1-\delta)}{9}
& \text{ if } 1/10\le \delta,
\end{cases}
\end{align*}
Also $m_1(\xx)\le 0$, and $m_1(\xx)= 0$
if and only if $\delta=1/10$.

If $t\in[a+\delta,1]$, then we have the following three cases depending
on $t$ ($a+\delta\le t \le a-\delta+0.1$ may be empty as well) with $u=t-a$, $\delta\le u \le 1-a$
\begin{equation*}
F(\xx,t)=
\begin{cases}
\log 100 (u-\delta)(u+\delta)
& \text{if } \delta\le 1/20, \delta\le u \le 1/10 -\delta,
\\
\log \frac{100}{9}(1-(u+\delta))(u-\delta)
& \text{if } \delta,1/10-\delta \le u \le \delta +1/10,
\\
\log \frac{100}{81}(1-(u+\delta))(1-(u-\delta))
& \text{if }  \delta +1/10 \le u \le 1-a.
\end{cases}
\end{equation*}
Here it is clear that the first expression is strictly increasing in $u$ and the third expression is strictly decreasing in $u$, so that $m_2(\xx)$ equals the maximum of the second expression.
Now elementary calculus shows that the second expression is strictly increasing in $u$ if $\delta+0.1 \le 1/2$, and if $\delta+0.1>1/2$, then it has strict local maximum at $u=1/2$, that is, when $t=a+1/2$.
Therefore
\[
m_2(\xx)=
\begin{cases}
F(\xx, a+\delta+1/10)=\log\left(1-\frac{20}{9}\delta\right)
& \text{ if } \delta+1/10\le 1/2,
\\
F(\xx, 1/2+a)= \log \frac{100}{9} \left(\frac{1}{2}-\delta\right)^2
& \text{ if } 4/10 \le \delta \le 1/2.
\end{cases}
\]

To determine the equioscillating configurations,
we compare the values of $m_1(\xx)$ and $m_2(\xx)$
in three cases depending on $\delta$.
They can be equal if and only if $\delta=\delta_0:=(\sqrt{82}-1)/90 \approx 0.0895$.
Therefore, any $\xx=(a-\delta_0, a+\delta_0)$, $a\in[\delta_0+1/10, 1-\delta_0-1/0]$ is an equioscillating configuration.

Also, $\overline{m}(\xx)\le 0$ and $\overline{m}(\xx)= 0$
if and only if $\xx=(a-\delta, a+\delta)$ and $\delta=0$
or $\delta=1/10$.

Moreover, both $m_1(\xx)$ and $m_2(\xx)$ are strictly
decreasing if $\delta\ge 1/10$ (and $\delta$ is not too large)
showing that strict majorization holds (for some configurations).

\begin{figure}[h]
\begin{center}
\includegraphics[keepaspectratio,width=0.4\textwidth]{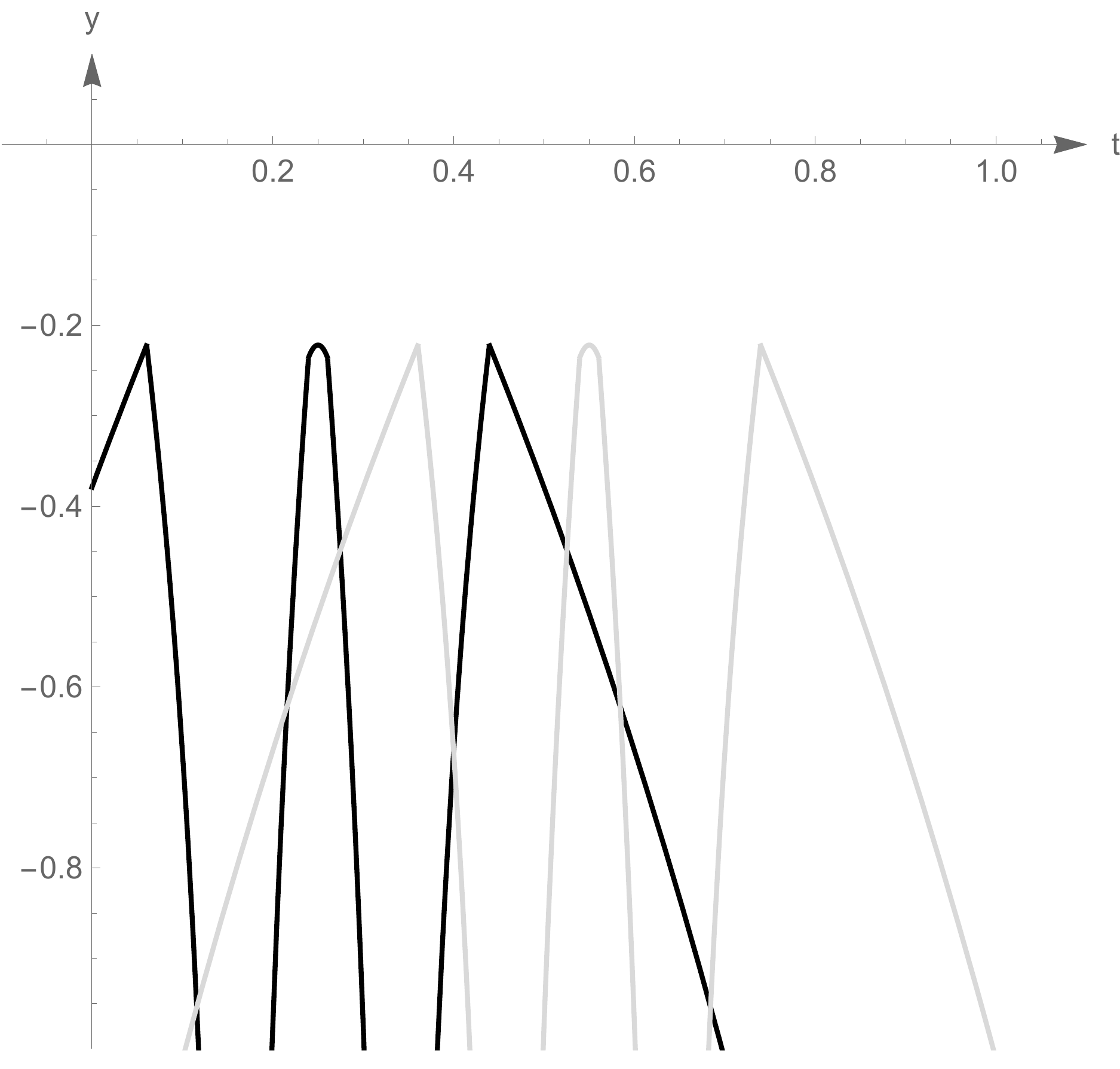}
\
\includegraphics[keepaspectratio,width=0.4\textwidth]{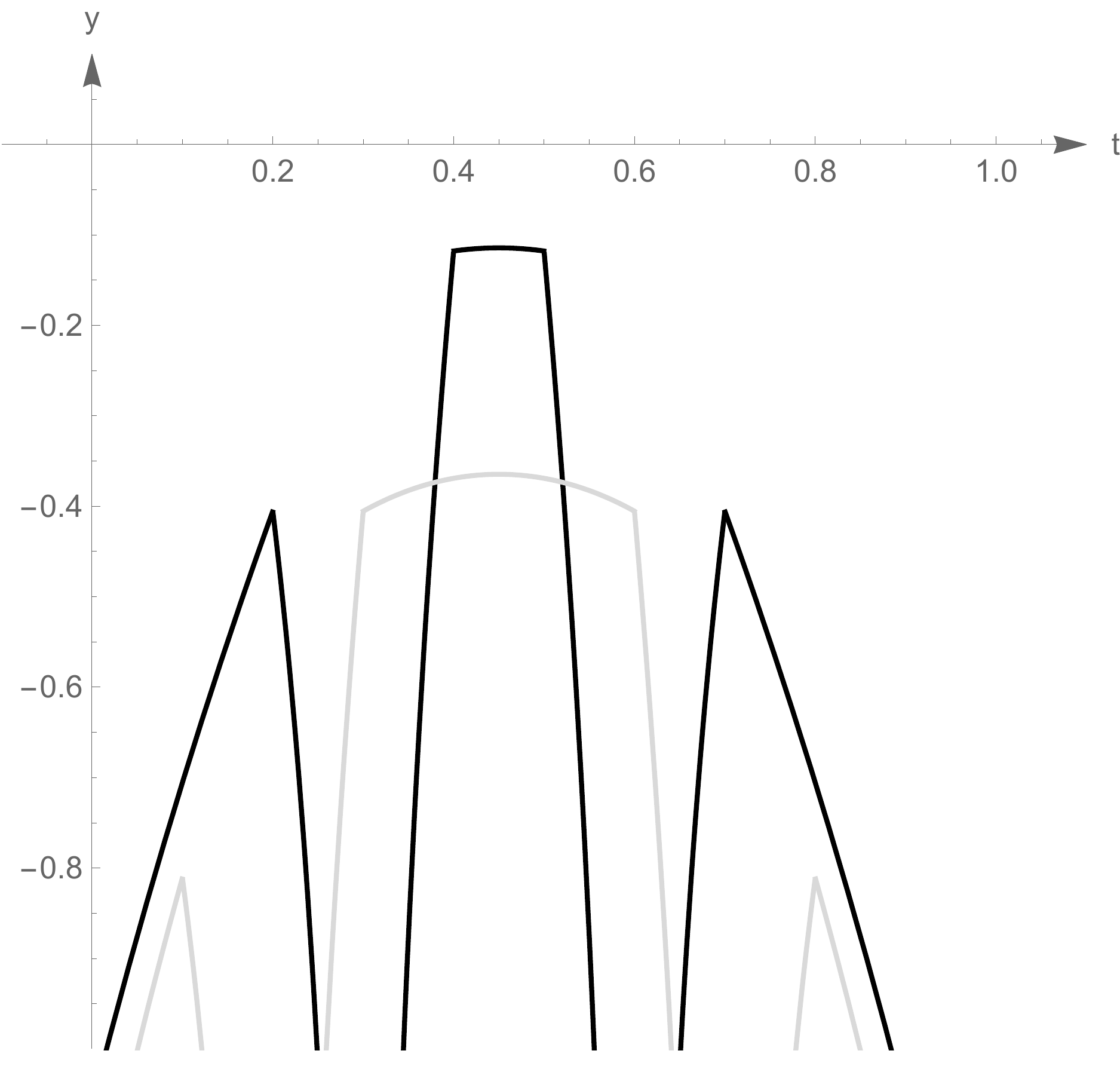}
\end{center}
\caption{
Graphs of $F(\xx,t)$ for $\xx=(a-\delta,a+\delta)$ on the left  for $a=1/4$, $\delta=(\sqrt{82}-1)/90$ (black) and $a=55/100$
 (grey); on the right for $a=45/100$, $\delta=15/100$ (black) and $\delta=25/100$ (grey), see Example \ref{examp:mon}.
}
\end{figure}

\section{Applications}
\label{sec:applicationsI}
\subsection{Bojanov's problem on the interval}
Consider now the set of \emph{monic} ``generalized algebraic polynomials'' (GAP, cf.{} Appendix A4, page 392 of \cite{ErdelyiBorwein}),
with given degree $\rr:=(r_1,\ldots,r_n)$, where $r_1,\ldots,r_n>0$ are given positive exponents:
\[
\PPrr[a,b]:=\left\{ P\ :\  P(t)=\prod_{j=1}^n |t-x_j|^{r_j} \ (t \in [a,b]),\
a\le x_1\le\ldots\le x_n\le b \right\}.
\]
Take an upper semicontinuous weight function $w:I\to [0,\infty)$, satisfying the condition
that it is non-zero at least at $n+1$ points
within the interval $I:=[a,b]$ 
(the endpoints $a,b$ are counted with weight $1/2$).
Consider the $w$-weighted uniform norm $\|\cdot\|_w$ defined
by $\|f\|_w:=\|fw\|_\infty:=\sup_I |f|w$.
Then Bojanov's Extremal Problem, extended to GAP-s, is to find the GAP $P\in \PPrr[a,b]$ with the least possible $\|P\|_w$.
If such an extremal polynomial exists, it will be called a \emph{Bojanov-Chebyshev polynomial}, so that $\|P\|_w=\min_{Q \in \PPrr[a,b]} \|Q\|_w$.

Actually, similarly to the classical 
case, there are two possible formulations of the extremal problem, 
since there is an ``unrestricted'' version, where we do not assume that the zeroes of 
the polynomial belong to $[a,b]$. However, 
here it is of importance that the order of the arising zero factors follow the order of the given exponents $r_j$, so for arbitrary complex zeros
the right interpretation is 
that we take
\[
\PPrr:=\left\{\prod_{j=1}^n |t-(x_j+iy_j)|^{r_j}\ :\  -\infty < x_1\le\ldots\le x_n<\infty, \ y_1,\ldots,y_n \in \RR \right\}.
\]
Thus the
\emph{(restricted)} Bo\-ja\-nov-Che\-by\-shev constant
is
$R_{\rr}^w[a,b]:=\min_{Q \in \PPrr[a,b]} \|Q\|_w$,
and the \emph{un\-res\-tric\-ted} Bo\-ja\-nov-Che\-by\-shev constant 
is
$C_{\rr}^w[a,b]:=\min_{Q \in \PPrr} \|Q\|_w$. As in the classical case, we easily see that
although formally $C_{\rr}^w[a,b]$ is an infimum over a larger set, we still have $C_{\rr}^w[a,b]=R_{\rr}^w[a,b]$, 
furthermore, extremizers exist only in $\PPrr[a,b]$ (if anywhere). 

We can now give a somewhat more precise statement than Theorem \ref{thm:BojanovGeneral}.

\begin{theorem}\label{thm:BojanovMoreGeneral} Let $n\in \NN$,  let $r_1,r_2,\ldots,r_n>0$ be positive numbers, 
let $[a,b]$ a non-de\-ge\-ne\-ra\-te, compact interval, and $w$ be an upper semicontinuous, 
non-negative weight function on $[a,b]$, assuming non-zero values at more than $n$  (weighted counting) points of the interval $[a,b]$.
Then $C_{\rr}^w[a,b]=R_{\rr}^w[a,b]$, and there exists one, unique 
Chebyshev-Bojanov extremal generalized  polynomial $P$, belonging to $\PPrr[a,b]$. 
This GAP has the form
\begin{equation}\label{Chebyshev-Bojanov-poly}
P(t)= \prod_{j=1}^n |t-x_j^*|^{r_j},
\end{equation}
with the node system $\xx^*:=(x_1^*,\ldots,x_n^*)$ satisfying $a<x_1^*<\ldots<x_n^*<b$ 
and uniquely determined by the following equioscillation property:
There exists an array of $n+1$ points 
$a \le t_0<t_1<t_2<\ldots<t_{n-1}<t_n\le b$ interlacing with the $x_i^*$, i.e., $a\le t_0<x_1^*<t_1<x_2^*<\ldots<x_n^*<t_n\le b$ 
such that
\begin{equation}\label{eq:osc}
P(t_k)w(t_k)=\|P\|_w \qquad (k=0,1,\ldots,n).
\end{equation}
Furthermore, if $w$ is in addition log-concave, then the unique Chebyshev-Bojanov extremal  generalized  polynomial $P$ is uniquely
determined by the property that there exists an array of $n+1$ points 
$a \le t_0<t_1<t_2<\ldots<t_{n-1}<t_n\le b$ such that \eqref{eq:osc} holds.
\end{theorem}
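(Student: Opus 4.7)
The plan is to reduce the Bojanov--Chebyshev problem to the sum of translates framework by taking logarithms. After an affine normalization of $[a,b]$ to $[0,1]$, set $K(t):=\log|t|$ and $J(t):=\log w(t)$. Then $K$ is a singular, strictly concave, strictly monotone kernel function, $J$ is upper semicontinuous (since $w$ is), and the hypothesis on $w$ being nonzero at more than $n$ weighted-counted points of $[a,b]$ is exactly the requirement that $J$ be an $n$-field function. For $Q(t)=\prod_{j=1}^n|t-x_j|^{r_j}\in\PPrr[a,b]$ with $\xx=(x_1,\ldots,x_n)$ one has $\log\|Qw\|_\infty=\mol(\xx)$ (with the convention $\log 0 = -\infty$), so minimizing $\|Qw\|_\infty$ over $\overline{S}$ is equivalent to minimizing $\mol$.

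By Corollary \ref{cor:minimaxmaximin-spec} there is a unique minimax point $\ww\in Y\subseteq S$, which is also the unique equioscillation point in $\overline{S}$. Setting $x_j^*:=w_j$ gives the extremal polynomial $P$ from \eqref{Chebyshev-Bojanov-poly} with $a<x_1^*<\cdots<x_n^*<b$. For each $j\in\{0,\dots,n\}$ the equioscillation value $m_j(\ww)=\log\|P\|_w$ is attained at some $t_j\in\III_j(\ww)$; since $K$ is singular, $F(\ww,x_k^*)=-\infty$, and so $t_j$ cannot coincide with any node, producing the interlacing $a\le t_0<x_1^*<t_1<\cdots<x_n^*<t_n\le b$ and \eqref{eq:osc}. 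For the converse characterization, if $Q\in\PPrr[a,b]$ with nodes $\yy$ admits an interlacing array $a\le t_0<y_1<t_1<\cdots<y_n<t_n\le b$ with $Q(t_k)w(t_k)=\|Q\|_w$, then $\yy$ is automatically non-degenerate, $t_k\in\III_k(\yy)$, and $m_k(\yy)=\log\|Q\|_w>-\infty$, so $\yy\in Y$ with $\Phi(\yy)=\vc{0}$; by injectivity of $\Phi$ (Homeomorphism Theorem \ref{thm:homeo3}) we conclude $\yy=\ww$ and thus $Q=P$.

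For the equality $C_\rr^w[a,b]=R_\rr^w[a,b]$, one uses the standard projection trick: given $Q(t)=\prod_j|t-z_j|^{r_j}\in\PPrr$ with $z_j=x_j+iy_j$, let $\tilde z_j$ be the projection of $x_j$ onto $[a,b]$. Then $|t-\tilde z_j|\le|t-z_j|$ for every $t\in[a,b]$, strictly unless $z_j=\tilde z_j$. The projected polynomial $\tilde Q$ preserves the ordering of the real parts, belongs to $\PPrr[a,b]$, and satisfies $\|\tilde Q\|_w\le\|Q\|_w$; hence $C_\rr^w[a,b]=R_\rr^w[a,b]$ and every extremizer lies in $\PPrr[a,b]$.

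Finally assume $w$ is log-concave and $Q(t)=\prod_j|t-y_j|^{r_j}\in\PPrr[a,b]$ attains its norm at $n+1$ distinct points $t_0<\cdots<t_n$ in $[a,b]$. It suffices to show that the $t_k$'s interlace with the $y_j$'s, for then the already established characterization gives $Q=P$. Since $J=\log w$ is concave and each $r_j\log|t-y_j|$ is strictly concave on every open interval not containing $y_j$, the function $F(\yy,\cdot)$ is strictly concave on every non-degenerate open subinterval $(y_j,y_{j+1})$, hence attains its maximum on each non-degenerate $\III_k(\yy)$ at a unique point. Degenerate subintervals cannot contain any $t_k$, since there $m_k(\yy)=-\infty<\log\|Q\|_w$; and singularity of $K$ prevents $t_k$ from coinciding with a node. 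Therefore the $n+1$ distinct $t_k$'s must distribute one per subinterval, yielding interlacing. The principal subtlety---and the main piece of genuine work beyond invoking the earlier results---lies precisely in this last step, namely in verifying strict concavity carefully up to the boundary of each subinterval so that uniqueness of the per-interval maximum survives the possibility that some $t_k$ lands at $a$ or $b$.
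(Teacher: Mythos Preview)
Your proof is correct and follows essentially the same approach as the paper: normalize to $[0,1]$, set $K=\log|\cdot|$ and $J=\log w$, identify $\log\|Q\|_w$ with $\mol(\xx)$, and apply Corollary~\ref{cor:minimaxmaximin-spec} for existence, uniqueness, and the equioscillation characterization; the log-concave addendum is handled in both by observing strict concavity of $F(\yy,\cdot)$ on each subinterval, forcing one maximum per interval and hence interlacing. You supply more detail than the paper in several places---the projection argument for $C_\rr^w=R_\rr^w$, the converse direction of the characterization via injectivity of $\Phi$, and the log-concave case for an arbitrary $Q$ rather than just the extremal $P$---but the underlying argument is the same. The ``principal subtlety'' you flag at the end is not really an obstacle: strict concavity on the open subinterval together with (extended) continuity on the closure already forces uniqueness of the maximum point on the closed interval, since two maximum points would yield a strictly larger value at their midpoint.
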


\begin{remark}\label{rem:GAP-BC} Note that Theorem 13.7 from \cite{TLMS2018} is the unweighted case. Also note that here we departed from considering the signatures, but in case $r_j\in \NN$ the analogous signed version can be seen easily. Also in fact one can assign signs to the factors of type $|t|^r$ arbitrarily, e.g. considering $|t|^r\sign t$ or, in case $r\in \NN$, $|t|^r (\sign t)^r$. Then the arising signed problem can be easily seen to become equivalent to the absolute value version. This shows that not the sign changes, but the attainment of minimal norm, are the decisive properties of an extremizer.
\end{remark}

\begin{proof}
By a simple linear substitution, it suffices to consider the case when $[a,b]=[0,1]$. Let $K:=\log|\cdot|$, $J:=\log w$. It is clear that $K$ is a singular, strictly concave, strictly monotone kernel function and $J$ is an $n$-field function. By taking logarithms, the original extremal problem of the minimization of $\|P\|_w$ is equivalent to the minimization problem $M(S)$ of $\sup F(\xx,\cdot)$ with the constants $r_j>0$, $(j=1,\ldots,n)$ as fixed in $\rr$.

We have already discussed that 
$C_{\rr}^w[0,1]=R_{\rr}^w[0,1]$. For the latter, we know $R_{\rr}^w[0,1]=\exp(M(S))$. 
Therefore, an application of Corollary \ref{cor:minimaxmaximin-spec} furnishes the characterization of extremal generalized polynomials.
To see the last assertion  note that by assumption $J$ is concave, so $F(\xx^*,\cdot)$ is strictly concave on each $I_j(\xx^*)$. 
By this the nodes and the (by strict concavity unique) maximum points $t_0,\ldots,t_n$ interlace, 
so the already proved characterization of extremal generalized polynomials applies.
\end{proof} 

Let us write $Q_\xx$ for the GAP with $Q_\xx(t):=\prod_{j=1}^n |t-x_j|^{r_j}$.
According to the above there is an even more precise understanding of the Bojanov-Chebyshev Problem. Writing $M_j(\xx):=\max_{\III_j(\xx)} |Qw|$,
we have the intertwining property that for \emph{any two admissible node systems} 
$\xx, \yy \in Y$  there exist indices $i,k$ with $M_i(\xx)<M_i(\yy)$ and $M_k(\xx)>M_k(\yy)$;
in particular, for any node system $\xx\ne \xx^*$ there exist indices $i,k$ with $M_i(\xx)<C_{\rr}^w[a,b]$ and $M_k(\xx)>C_{\rr}^w[a,b]$,
so that these Chebyshev constants are bounded from both sides
by interval maxima of an arbitrary node system:
$\min_{i=0,\ldots n} M_i(\xx) <C_{\rr}^w[a,b] < \min_{k=0,\ldots n} M_k(\xx)$.

Let us emphasize that the above discussion not only generalizes the Bojanov-Chebyshev Problem to weighted GAPs, but is much more general, given that we can take any log-concave, monotone factors $G(t)$ in place of $|t|$ (corresponding to $K(t):=\log G(t)$ being more general than $\log |t|$).

\subsection{Comparison of Chebyshev Constants of Union of Intervals}\label{sec:Widom}

The above discussion of various versions of the Chebyshev constant might have been considered trivial, but if we move to non-convex sets, then the distinction between restricted and non-restricted Chebyshev constants becomes essential.

Let $E \subset \RR$ be a compact set and $w\ge 0$ be a weight.
As above, define the \emph{restricted Bojanov-Chebyshev constant} $R_{\rr}^w(E):=\min_{Q \in \PPrr(E)} \|Q\|_w$, 
where
\[
\PPrr(E):=\left\{ P\ :\  P(t)=\prod_{j=1}^n |t-x_j|^{r_j} \ (t \in E),\
 x_1,\dots,x_n\in E\right\},
\]
and the \emph{unrestricted Bojanov-Chebyshev constant} $C_{\rr}^w(E):=\min_{Q \in \PPrr} \|Q\|_w$.

What we can do here is the following.

\begin{theorem}\label{thm:TwotypeChebyshev} Let $k, n\in \NN$, $a_1<b_1<a_2<b_2<\ldots<a_k<b_k$ be arbitrary real numbers, $E:=\cup_{\ell=1}^k [a_\ell,b_\ell]$, and $\rr\in (0,\infty)^n$ be an arbitrary exponent system. Then the restricted and unrestricted Chebyshev constants satisfy the inequality
\begin{equation}\label{ineqbetweenChebyshevs}
C_{\rr}^w(E) \le R_{\rr}^w(E) \le C(k,\rr) C_{\rr}^w(E),
\end{equation}
where $C(k,\rr):=2^{\max\{ r_{i_1}+\ldots+r_{i_{k-1}}\ :\  1\le i_1<\ldots<i_{k-1}\le n\}}$. In particular, if $\rr:={\bf 1}$, i.e., $\PPrr=\PP^1_n$, the family of the (absolute value of the) ordinary monic degree $n$ algebraic polynomials, then $C_n^w(E) \le R_n^w(E)\le 2^{k-1} C_n^w(E)$, independently of the value of $n$.
\end{theorem}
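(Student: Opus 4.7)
The first inequality $C_\rr^w(E)\le R_\rr^w(E)$ is immediate from $\PPrr(E)\subseteq\PPrr$, so only the upper bound on $R_\rr^w(E)$ requires work. My plan is to start from an extremizer of the unrestricted problem, invoke Theorem~\ref{thm:BojanovMoreGeneral} on the convex hull $[a_1,b_k]$ with a suitably extended weight in order to expose an interlacing structure, use that structure to show the extremizer has at most one node in each of the $k-1$ gaps, and finally project each gap-node to the nearer endpoint of its gap at a cost of at most a factor $2$ per projection.

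The first move is to reduce to a real extremizer with nodes in $[a_1,b_k]$: a complex node $x_j+iy_j$ can be replaced by $x_j$ without increasing the weighted norm (since $|t-(x_j+iy_j)|\ge|t-x_j|$ for real $t$), and a real node outside $[a_1,b_k]$ can be pushed to the nearer endpoint of $[a_1,b_k]$ by the monotonicity of $|t-\cdot|$ on $E$. Next, I would extend $w$ to $\widetilde w:[a_1,b_k]\to[0,\infty)$ by setting $\widetilde w\equiv 0$ on the open gaps $G_\ell:=(b_\ell,a_{\ell+1})$, $\ell=1,\dots,k-1$. Since $E$ is closed in $[a_1,b_k]$, $\widetilde w$ is upper semicontinuous, and the non-vanishing hypothesis on $w$ transfers to $\widetilde w$. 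The preceding reductions yield $C_\rr^w(E)=C_\rr^{\widetilde w}([a_1,b_k])$, so Theorem~\ref{thm:BojanovMoreGeneral} applies and produces an extremizer $Q^*$ with nodes $x_1^*<\dots<x_n^*$ in $(a_1,b_k)$ and an interlacing array $a_1\le t_0<x_1^*<t_1<\dots<x_n^*<t_n\le b_k$ at which $Q^*(t_s)\widetilde w(t_s)=\|Q^*\|_{\widetilde w}>0$.

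The key structural observation is that $\widetilde w(t_s)>0$ forces $t_s\in E$ for every $s$, which in turn limits $Q^*$ to at most one node per gap. Indeed, if both $x_i^*,x_j^*\in G_\ell$ with $i<j$, then $t_i>x_i^*>b_\ell$ together with $t_i\in E$ gives $t_i\ge a_{\ell+1}$, while $t_{j-1}<x_j^*<a_{\ell+1}$ together with $t_{j-1}\in E$ gives $t_{j-1}\le b_\ell$; combined with $t_i\le t_{j-1}$ this yields the contradiction $a_{\ell+1}\le b_\ell$. Hence $Q^*$ has at most $k-1$ nodes outside $E$.

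For the final step, I would project each gap-node $x_j^*\in G_\ell$ to $b_\ell$ or $a_{\ell+1}$ according as $x_j^*$ lies in the left or right half of $G_\ell$, and keep all other nodes; call the new polynomial $\widetilde Q\in\PPrr(E)$. The midpoint choice guarantees the pointwise bound $|t-\widetilde x_j|\le 2\,|t-x_j^*|$ for every $t\in E$: the ratio is at most $1$ on the component side of $G_\ell$ containing $\widetilde x_j$, and it is maximized at the facing endpoint of the opposite component, where the midpoint choice yields the factor $2$. Since the moved indices form a subset of size at most $k-1$ and all $r_j>0$, multiplying these pointwise bounds yields $w(t)\widetilde Q(t)\le 2^{\sum_{j\text{ moved}}r_j}\,w(t)Q^*(t)$ on $E$, with the exponent bounded by $\max\{r_{i_1}+\dots+r_{i_{k-1}}:1\le i_1<\dots<i_{k-1}\le n\}$. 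Therefore $R_\rr^w(E)\le\|\widetilde Q\|_w\le C(k,\rr)\,C_\rr^w(E)$. The real obstacle is the node-count in Step~3: without the interlacing conclusion of Theorem~\ref{thm:BojanovMoreGeneral} one would obtain only the much weaker bound $2^{r_1+\dots+r_n}$, so the whole point of extending $w$ to $\widetilde w$ and applying the Bojanov equioscillation theorem is precisely to force the equioscillation points into $E$ and thereby trap at most one extremal node per gap.
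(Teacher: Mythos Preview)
Your proof is correct and follows essentially the same route as the paper. Both arguments extend the weight by zero on the gaps, use the equioscillation machinery to force the extremizer to have at most one node per gap, and then project each gap-node to the nearer gap-endpoint at a cost of a factor $2^{r_j}$. The only cosmetic difference is that you invoke Theorem~\ref{thm:BojanovMoreGeneral} to obtain interlacing equioscillation points $t_s\in E$ and deduce the one-node-per-gap property from $t_i\in(x_i^*,x_{i+1}^*)\cap E$, whereas the paper appeals directly to Theorem~\ref{thm:EquiThm} to get $\ww\in Y$ and deduces the same fact from non-singularity of every $I_i(\ww)$; these are two phrasings of the same observation, since Theorem~\ref{thm:BojanovMoreGeneral} is itself derived from Theorem~\ref{thm:EquiThm}.
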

\begin{proof} As above, it is easy to see that for the unrestricted Chebyshev constant it suffices to consider polynomials with roots $x_j$
all in the closed convex hull.
$E^*:=\con E= [a_1,b_k]$ of $E$.
So by compactness of $E^*$ and upper semicontinuity of $w$ there exists an extremizer, and for this $P\in \PPrr(E^*)$ we have $C_\rr^w(E)=\|P\|_w(:=\max_E |Pw|)$. We will construct a $Q\in \PPrr(E)$ with $\|Q\|_w \le C(k,\rr) \|P\|_w$, so that minimizing over $\PPrr(E)$ would be seen to yield $R_\rr^w(E)\le C(k,\rr) \|P\|_w= C(k,\rr) C_\rr^w(E)$, as needed.

For convenience assume $E\subset [0,1]$, or for technical ease, even $a_1=0$ and $b_k=1$. 
Take $K(t):=\log|t|$, $J(t):=\log (w\chi_E)$, where $w$ is understood as defined all 
over $\RR$ and $\chi_E$ being the indicator function of $E$ (which is upper semicontinuous). 
Obviously, then we have for any $Q_\xx(t) \in \PPrr(E^*)$ with root system $\xx \in E^*=[0,1]$ that
\[
\log \|Q_{\xx}\|_w = \max_{t\in [0,1]} \Bigl(J(t)+\sum_{j=1}^n r_jK(t-x_j)\Bigr) =\max_{t\in [0,1]} F(\xx,t) = \mol(\xx).
\]
So in particular the minimality of $\|P\|_w$ over choices of zeroes $\xx \in E^*=[0,1]$ translates to the statement that
$P=Q_{\ww}$ with some $\ww \in \oS$ being a minimax point of $F$. As has been shown above in Theorem \ref{thm:EquiThm}, 
for the strictly concave and singular kernel function $K$, satisfying strict monotonicity
\eqref{cond:smonotone}, we have that $\ww \in Y$.
As in the complementary intervals $J_\ell:=(b_\ell,a_{\ell+1})\  (\ell=1,k-1)$ the indicator
function $\chi_E$ vanishes, also $J=-\infty$, and subintervals of that complementary intervals are singular.
So, no $\III_i(\ww)$ can be subinterval of those complementary intervals, because $\ww$ is non-singular.
In other words, in such a complementary interval 
there is at most one $w_i\in J_\ell$.

To construct our $Q_\xx$, i.e., the corresponding $\xx$ (with all $x_i\in E)$ and $F(\xx,\cdot)$, we choose the node system $\xx$ such that $x_i=w_i$ whenever $w_i\in E$, and $x_i=b_\ell$ or $x_i=a_{\ell+1}$, whichever is closer to $w_i$, in case $w_i\in J_\ell$ (and say $x_i:=b_\ell$ if they are of equal distance, i.e., $w_i=\frac{b_\ell+a_{\ell+1}}{2}$).

Let us compare the pure sum of translates functions for $\ww$ and $\xx$. We get
\[
f(\xx,t)-f(\ww,t)=\sum_{i\ w_i\not\in E} \left( r_iK(t-x_i)-r_iK(t-w_i) \right).
\]
If $i$ is such that $w_i\not\in E$, then $w_i\in J_\ell=(b_\ell,a_{\ell+1})$ for some $1\le \ell\le k-1)$, 
while for $t\in E$ we have either $t\le b_{\ell}$, or 
$t\geq a_{\ell+1}$. 
In case $x_i=b_{\ell}$ we have for all $0\le t\le b_\ell$ that $K(t-x_i)<K(t-w_i)$ by monotonicity. 
Let now $a_{\ell+1}\le t \le 1$. 
Then $K(t-x_i)=\log(t-x_i)=\log(t-w_i+(w_i-x_i))\le \log (2(t-w_i)) = \log2+K(t-w_i)$, for $t-w_i\ge a_{\ell+1}-w_i \ge \frac{a_{\ell+1}-b_\ell}{2}\ge w_i-x_i$ by construction. It follows that $K(t-x_i)\le \log 2 +K(t-w_i)$ for all $t\in E$. Similarly, it is easy to see that the same holds whenever $x_i=a_{\ell+1}$. Adding this for all indices $i$ with $w_i\not\in E$ we find
\[
f(\xx,t)-f(\ww,t)=\sum_{i\ w_i\not\in E} r_i \log2 \le \log C(k,\rr) \qquad (\forall t \in E).
\]
Therefore, we also have $F(\xx,t) \le \log C(k,\rr) + F(\ww,t)$
for all points $t\in [0,1]$, where $J(t)\ne -\infty$.
However, if $t\not\in E$, then adding $J(t)=-\infty$ makes both
sides $F(\xx,t)=F(\ww,t)=-\infty$, so the same inequality reads as $-\infty\le-\infty$,
and it remains in effect.
Finally, taking maxima we obtain $\mol(\xx)\le \mol(\ww)+\log C(k,\rr)$.
The assertion is proved.
\end{proof}

\section{Acknowledgment}

We are deeply indebted to V.V.~Arestov, V.I.~Berdyshev and M.V.~Deykalova
for providing us useful references and advice regarding classical literature on
Chebyshev type problems and partial maxima functions.
Also we are glad to mention the inspiring atmosphere of the regular Stechkin Summer Schools-Workshops,
which provided us an ideal forum to present and discuss our ever developing results with a generous and professional community.
We have benefited much from the comments and questions received there.

This research was partially supported by the DAAD-TKA
Research Project ``Harmonic Analysis and Extremal Problems'' \# 308015.

Szil\'ard Gy.~R\'ev\'esz was supported in part by Hungarian National Research,
Development and Innovation Fund project \# K-132097.


%
\providecommand{\bysame}{\leavevmode\hbox to3em{\hrulefill}\thinspace}
\providecommand{\MR}{\relax\ifhmode\unskip\space\fi MR }
\providecommand{\MRhref}[2]{%
  \href{http://www.ams.org/mathscinet-getitem?mr=#1}{#2}
}
\providecommand{\href}[2]{#2}

\end{document}